\documentclass[10pt,reqno]{amsart}
  \usepackage{amssymb}
  \usepackage{bm}
  \usepackage{geometry}
    \geometry{left=2.5cm,right=2.5cm,top=2.5cm,bottom=2.5cm}
  \usepackage{mathrsfs}
  \usepackage{esvect}
  \usepackage{amsmath}
  \usepackage{hyperref}
  \usepackage{url}
\usepackage{array} 
\usepackage{varwidth}
  \usepackage{paralist}
    \usepackage{tikz-cd}
  \usepackage{tikz}
	  \usetikzlibrary{matrix,arrows}
      \tikzset{>=latex}
	\usepackage{amsthm}
      \newtheorem{theorem}{Theorem}
      
      \newtheorem*{theorem*}{Theorem}


\numberwithin{figure}{section}
\numberwithin{table}{section}


      \newtheorem{corollary}[theorem]{Corollary}

    \theoremstyle{definition}
      
      \newtheorem{definition}[theorem]{Definition}

      \newtheorem{proposition}[theorem]{Proposition}
      \newtheorem{construction}[theorem]{Construction}
      \newtheorem{lemma}[theorem]{Lemma}
      \newtheorem{remark}[theorem]{Remark}
\newtheorem*{dim2}{\textup{\textbf{Theorem ~\ref{theorem:MAIN(co)dim2-eff-neff}}}}
\newtheorem*{dim3}{\textup{\textbf{Theorem ~\ref{theorem:MAIN(co)dim3-eff-neff}}}}

\definecolor{mycolor}{RGB}{180,180,180}

	\numberwithin{equation}{section}
	\numberwithin{theorem}{section}


\newcommand{\bburl}[1]{\textcolor{blue}{\url{#1}}}
  \newcommand{\betarep}{b}

  \newcommand{\Hilb}{\textup{Hilb}}
  \newcommand{\Nef}{\textup{Nef}}
  \newcommand{\Eff}{\textup{Eff}}
  \newcommand{\MS}{\textup{MS}}
  
  \newcommand{\RL}{\textup{RL}}
  \newcommand{\wt}{\widetilde}

  \newcommand{\C}{{\mathbb C}}

\newcommand{\OO}{{\mathcal O}}

\newcommand{\Q}{{\mathbb Q}}

\newcommand{\Al}{\textup{Al}}
\newcommand{\Ker}{\textup{Ker}}

\newcommand{\col}{\textup{col}}
\newcommand{\nr}{\textup{nonred}}

\newcommand{\A}{{\mathbb A}}

\newcommand{\PP}{{\mathbb P}}

\setcounter{MaxMatrixCols}{26}
\begin{document}

\title{Higher codimension nef and effective cycles of $\Hilb_3 \PP^3$}
\author[Moreland]{Gwyneth Moreland}
\email{\textcolor{blue}{\href{mailto:gwynm@uic.edu}{gwynm@uic.edu}}}
\address{Department of Mathematics, Stat., \& CS, University of Illinois Chicago, Chicago, IL 60607}

\begin{abstract}
Nef and effective cones of divisors have been the subject of much study. In contrast, their higher codimension analogues are much harder to compute and few examples exist in the literature. In this paper we compute the nef cones in codimensions 2 \& 3 and the effective cones in dimensions 2 \& 3 for the Hilbert scheme of three points in $\PP^3$. Our computation generalizes results of Ryan \& Stathis and requires a careful analysis of the PGL orbits in the Hilbert scheme, as well as a new basis of the Chow ring inspired by Mallavibarrena and Sols.

\end{abstract}

\maketitle
\tableofcontents

\section{Introduction} 
 Let $\Hilb_m \PP^n$ denote the Hilbert scheme of zero-dimensional, length $m$ schemes in $\PP^n$. Much work has been done studying Hilbert schemes of points in the surface case (ex: \cite{ES87}, \cite{MS90}, \cite{Go90}, \cite{F95}, \cite{N96}, \cite{LQW04}, \cite{OP10}, \cite{ACBH13}, \cite{Hu16}, \cite{R16}, \cite{BS17},  \cite{A21}, \cite{RS21}, \cite{BZ23}) where these schemes are well-behaved. $\Hilb_m \PP^2$ is smooth, irreducible, reduced, and we have a birational map
\[\Hilb_m \PP^2 \rightarrow \textup{Sym}^m \PP^2.\]
Hence a generic $\Gamma \in \Hilb_m \PP^2$ corresponds to the union of $m$ distinct points. In higher dimensions, these schemes quickly become poorly behaved: $\Hilb_m \PP^3$ need not be irreducible \cite{I72} nor smooth. $\Hilb_4 \PP^3$, for example, is singular along the fat points. Jelisiejew \cite{J20} and Szachniewicz \cite{S21} demonstrate that Hilbert schemes of points in higher dimension need not be reduced.
$\Hilb_3 \PP^3$, however, is relatively well-behaved: it is irreducible, reduced, and smooth, and there are nice bases for the Chow/cohomology ring \cite{RL90}, \cite{FG93}. Nonetheless, we still get some of the interesting features of higher dimension, coming from the larger ranks of the Chow groups $A_k(\Hilb_3 \PP^3)$ and more complicated collinear loci than in the case of $\Hilb_3 \PP^2$.

In this paper, we look at higher (co)dimension nef and effective cones of $\Hilb_3 \PP^3$. Nef and effective cones in (co)dimension 1 have been a subject of study due to their connections to the minimal model program, as the nef cone of divisors is the closure of the ample cone. Higher (co)dimension analogs have been studied recently in \cite{DELV11}, \cite{CLO16}, \cite{CC15}, as well as \cite{RS21}. In particular, Ryan and Stathis \cite{RS21} study the (co)dimension 2 \& 3 nef and effective cones in $\Hilb_3 \PP^2$, utilizing the group action of $\textup{PGL}_3(\mathbb{C})$. While the study of positive cones in intermediate dimension is geometrically rich, we still have far fewer examples due to difficulties of working in intermediate dimension: the relevant vector spaces are often large in rank, we lack nice criteria for the cones being polyhedral, and some of the useful tools for curves and divisors no longer work. For example, a nef class need not be pseudoeffective in intermediate dimension. This paper provides illuminating new examples.

We will see that in the case of the dimension 2 and 3 effective cones and codimension 2 and 3 nef cones of $\Hilb_3 \PP^3$, these cones are polyhedral.

Our main theorems are stated below and proven in Section \ref{section:nef-eff-Hilb3P3}.

\begin{dim2} The effective cone of surfaces $\Eff_2(\Hilb_3 \PP^3)$ and the nef cone of sevenfolds $\Nef^2(\Hilb_3 \PP^3)$ are finite polyhedral cones generated by 7 and 8 generators respectively. We have explicit effective geometric representatives for each of the rays.
\end{dim2}
\begin{dim3} $\Eff_3(\Hilb_3 \PP^3)$ and $\Nef^3(\Hilb_3 \PP^3)$ are finite polyhedral cones with $13$ and $26$ generators respectively. We have explicit effective geometric representatives for each of the rays.
\end{dim3}

To prove Theorems \ref{theorem:MAIN(co)dim2-eff-neff} and \ref{theorem:MAIN(co)dim3-eff-neff} we first explicitly construct new bases of $A_i(\Hilb_3 \PP^3)$ for $i=2,3,6,7$. We then compute many of the positive cones of intermediate cycles for the PGL-orbit closures in $\Hilb_3 \PP^3$. We use a variant of Kleiman's transversality theorem to prove the nefness of certain classes. Our basis is especially suited to this application. The dual of the cone spanned by the nef classes we describe consists of effective classes by explicit construction, hence they span the effective cone. Since the nef classes we construct are all effective, one sees that the nef cone in codimensions $2$ and $3$ is contained in the effective cone-- a property not guaranteed in higher codimension.



\begin{remark}Since the dimension of the ambient vector space $A_2(\Hilb_3 \PP^n)$ stabilizes once $n \ge 4$, Theorem \ref{theorem:MAIN(co)dim2-eff-neff} gets us quite closer to determining the effective cone of surfaces for all $\Hilb_3 \PP^n$. This would provide a large class of examples of positive cones in intermediate dimension. One obstruction to solving the general case is that current methods rely on knowing intersection data about $A_k(\Hilb_3 \PP^n)$ for values of $k$ close to $n/2$. As $n$ grows these Chow groups become more unwieldy. It is worth noting that, from the proof of the $\Hilb_3 \PP^2$ and $\Hilb_3 \PP^3$ cases, we expect the effect cones of surfaces $\Eff_2(\Hilb_3 \PP^n)$ to stabilize as well once $n \ge 4$.
\end{remark}

\begin{remark} This leaves the effective and nef cones of fourfolds and fivefolds ($\Eff_4,\Eff_5,\Nef^4,\Nef^5$) as the remaining open computations for $\Hilb_3 \PP^3$. Understanding these would require better understanding the locus of nonreduced schemes in $\Hilb_3 \PP^3$. This is a difficult task: \textit{a priori} there are no good ways to get a handle on the structure of this locus.
\end{remark}

\begin{remark} The effective and nef cones of curves and divisors can be computed via standard methods of finding moving curves and contracted rays, similar to computations for $\Hilb_3 \PP^2$. We summarize the generators below; a proof can be found in \cite{ACBH13}. 

The effective cone of curves $\Eff_1(\Hilb_3 \PP^3)$ has two generators: the locus of schemes given by fixing two points on a line and allowing the third point to vary along the line, and the locus of scehemes whose general member is contained in a fixed plane, consisting of a fixed point $P$ and a length two scheme supported at another fixed point $Q$. The cone of nef divisors is then generated by the hyperplane class and the locus of schemes containing a length two subscheme coplanar with a fixed line $L$. 

The effective cone of divisors is generated by the nonreduced locus and the locus of schemes coplanar with a fixed point. The nef cone of curves is then generated by: the locus of schemes consisting of a fixed point $P$, a point allowed to vary on a fixed line $L$ containing $P$, and another fixed point $Q$ not on $L$, as well as the locus of schemes consisting of two fixed points $P,Q$, and a third point allowed to vary on a fixed line $L'$ not containing $P$ or $Q$.
\end{remark}

\begin{remark} This paper utilizes some intersection data computed for $\Hilb_3 \PP^3$ given in \cite{RL90}. We would like to give a word of caution to the reader: the intersection tables at the end of \cite{RL90} contain a few errors. 
  Four of the values are erroneous; the computation of their correct values is done in Lemma \ref{lem:corrections}. 
  \end{remark}

\begin{remark} Certain change-of-basis and intersection theory computations in this paper utilize Mathematica and Python; code for this can be found at:
\\
\url{github.com/gwynethmoreland/gwynethmoreland.github.io/tree/main/files/Code/Hilb3P3_Code}
\end{remark}

The paper is structured as follows: in Section \ref{Section:backgroundP2} we give some background on Hilbert schemes of points in $\PP^2$, their Chow groups, and their nef and effective cones. In Section \ref{section:backgroundP3}, we give some background on $\Hilb_3 \PP^3$ and the structure of its Chow ring. In particular, we recount a basis for the Chow groups given by \cite{RL90}, and extend a basis of \cite{MS90} to the case of $A_i(\Hilb_3 \PP^3)$ for $i=2,3,6,7$. In Section \ref{section:orbits}, we study the orbits of the $\textup{PGL}$ action and their positive cones. In Section \ref{section:nef-eff-Hilb3P3}, we prove the main theorems. Lastly, in Appendix section \ref{appendix:computations} we compute some classes necessary for the main body of the paper, and in Appendix section \ref{appendix:Grassmannian} we examine classes coming from the Grassmannian $\mathbb{G}(1,3)$ and use them to compute a few intersection products.

\subsection*{Acknowledgements}
The author would like to thank Izzet Coskun, Joe Harris, Tim Ryan, Ra\'ul Ch\'avez Sarmiento, and Geoffrey Smith for helpful discussions regarding the content of this paper. The author would also like to thank Serina Hu for assistance with some of the Python code relating to this paper. This work was partially supported by the NSF under grants No. DGE1745303 and No. DMS2037569.
\section{Preliminaries on Hilbert schemes of points}\label{Section:backgroundP2}
\subsection{Groups of cycles and positive cones}
In this paper, our (co)homology, Chow, and numerical groups have $\Q$-cofficients. 
\begin{proposition}\label{prop:chow-hom} For $\Hilb_3 \PP^n$, the cycle map from the Chow groups to singular homology groups
\begin{align*}
\textup{cl}: A_{\bullet}(\Hilb_3 \PP^n) \to H_{\bullet}(\Hilb_3 \PP^n)
\end{align*}
is an isomorphism. In particular, numerical equivalence and rational equivalence align for $\Hilb_3 \PP^n$, and a cycle in $A_k(\Hilb_3 \PP^n)$ is determined by its intersection numbers with a basis of $A_{3n-k}(\Hilb_3 \PP^n)$. 
\end{proposition}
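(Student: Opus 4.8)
The plan is to reduce the statement to the existence of an algebraic cell decomposition (affine paving) of $\Hilb_3\PP^n$, and then to invoke the standard homological consequences of such a decomposition. Recall the general principle: if a smooth projective variety $X$ admits a filtrable decomposition $X=\bigsqcup_i W_i$ into locally closed subsets each isomorphic to an affine space $\A^{d_i}$, then the closures $\overline{W_i}$ freely generate the Chow groups, the cycle map $\textup{cl}\colon A_k(X)\to H_{2k}(X)$ is an isomorphism, and the odd homology vanishes. So the entire proposition follows once such a paving is produced, together with one application of Poincar\'e duality.

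To produce the paving, I would first use that $\Hilb_3\PP^n$ is smooth, irreducible and projective. Consider the diagonal action of the torus $T=(\C^*)^{n+1}$ on $\PP^n$, which induces an action on $\Hilb_3\PP^n$, and choose a general one-parameter subgroup $\lambda\colon\C^*\to T$. The $\lambda$-fixed points of $\Hilb_3\PP^n$ are the monomial ideals supported at the coordinate points of $\PP^n$; for length $3$ these are finite in number and, for generic weights of $\lambda$, isolated. Bialynicki--Birula's theorem then provides a decomposition of $\Hilb_3\PP^n$ into the attracting cells $W_p$ of the finitely many fixed points $p$, each $W_p$ an affine space, and this decomposition is filtrable because $\Hilb_3\PP^n$ is smooth and projective. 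This is exactly the cell decomposition required; alternatively one may read off the same conclusion from the explicit additive bases of $A_\bullet(\Hilb_3\PP^n)$ recorded in \cite{RL90} and \cite{FG93}.

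For the numerical part, note that $X=\Hilb_3\PP^n$ is smooth projective of dimension $3n$ with $H_{\textup{odd}}(X,\Q)=0$, so Poincar\'e duality furnishes a perfect pairing $H^{2k}(X,\Q)\times H^{2(3n-k)}(X,\Q)\to\Q$; under the cycle-class isomorphism this pairing is identified with the intersection pairing $A_k(X)\times A_{3n-k}(X)\to\Q$. A perfect pairing has no kernel, so a class in $A_k(X)$ is rationally trivial as soon as it meets every class of complementary dimension in degree zero. This is precisely the assertion that numerical and rational equivalence coincide and that a cycle in $A_k$ is determined by its intersection numbers against a basis of $A_{3n-k}$. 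I expect the main technical obstacle to lie in the second paragraph: verifying that a general $\lambda$ has only isolated fixed points and that the associated attracting sets are genuine affine cells. This is where the smallness of the length being $3$ is used, since length-three monomial ideals can be enumerated explicitly and their attracting data controlled directly.
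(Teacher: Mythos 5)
Your proposal is correct and follows essentially the same route as the paper: a torus action on $\PP^n$ induces one on $\Hilb_3\PP^n$ with finitely many fixed points (monomial ideals), a general one-parameter subgroup gives a $\mathbb{G}_m$-action with isolated fixed points, and the Bialynicki--Birula theorem produces the cellular decomposition from which the conclusions follow. The only cosmetic difference is that you spell out the Poincar\'e-duality/perfect-pairing consequence by hand, whereas the paper cites Example 19.1.11 of \cite{F98} for the same package of conclusions.
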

\begin{proof}The same argument in \cite{ES87} Lemma 2.1 applies. The torus action on $\PP^n$ yields a torus action on $\Hilb_3 \PP^n$ with finitely many fixed points (length $3$ schemes corresponding to monomial ideals), and a sufficiently general one parameter subgroup will yield a $\mathbb{G}_m$ action with finitely many fixed points as well. Then Theorem 4.5 in \cite{BB73} yields a cellular decomposition of $\Hilb_3 \PP^n$, which, by Example 19.1.11, \cite{F98}, implies the proposition.
\end{proof}

\begin{definition}
Let $N_k(X)$ denote the $\Q$-vector space of $k$-dimensional algebraic cycles of a variety $X$ modulo numerical equivalence. The effective cone $\Eff_k(X)$ is the cone spanned by classes of $k$-dimensional varieties, and the pseudoeffective cone is its closure $\overline{\Eff}_k(X)$. For the varieties we consider, the pseudoeffective cone and effective cone will align.

Furthermore, let $N^k(X)$ denote codimension $k$ cycles modulo numerical equivalence. We define $\Nef^k(X)$ as the dual cone of $\overline{\Eff}_k(X)$ in $N^k(X)$. 
\end{definition}

For any of the varieties for which we need to compute the effective and nef cones, $N_k(X) \cong A_k(X)$ and $N^k(X) \cong A^k(X)$, so we may view our cones as living in the Chow groups. For $\Hilb_3 \PP^3$, these isomorphisms follow from Proposition $\ref{prop:chow-hom}$, and for the locus of collinear schemes, the locus of collinear schemes support at at most two points, the locus of collinear schemes supported at a point, and the locus of planar fat points, which all appear in Section \ref{section:orbits}, it follows from these varieties being isomorphic to flag varieties or projective bundles over flag varieties.
\subsection{Two bases for the Chow groups of $\Hilb_3 \PP^2$}

We consider two bases for the Chow groups $A_k(\Hilb_3 \PP^2)$: one from Elencwajg \& Le Barz (\cite{EL83}), and one from Mallavibarrena \& Sols (\cite{MS90}).
 The Elencwajg-Le Barz basis has the advantage of being stated in terms of multiplication of just a few classes, and the added perk of having a $\Hilb_3 \PP^3$ analog with much of the multiplication data worked out. The Mallavibarrena-Sols basis is a little easier to compute intersection products with by hand, and additionally Ryan and Stathis \cite{RS21} compute the nef cones of $A^{\bullet}(\Hilb_3 \PP^2)$ in codimensions 2 and 3 and effective cones in dimension 2 and 3 using this basis. We utilize some of their computations in the $\Hilb_3 \PP^3$ case. Both bases will be utilized in this paper, and we compute some changes of coordinates between the two bases in Propositions \ref{prop:P2-conversion} and \ref{prop:P3-conversion}.

 We start with the Elencwajg-Le Barz (EL) basis. Let $Q \in \PP^2$ be a fixed point, $L, L' \subseteq \PP^2$ be two fixed lines, and define the following classes.
 \begin{align*}
F &= [\{\Gamma \in \Hilb_3 \PP^2 : \exists d \subset \Gamma \textup{ of length 2 such that $d$ is collinear with $Q$}\}]\\
H &= [\{\Gamma \in \Hilb_3 \PP^2 : \Gamma \cap L \ne \emptyset\}]\\
\ell &= [\{\Gamma \in \Hilb_3 \PP^2 : Q \in \Gamma\}]\\
g &= [\{\Gamma \in \Hilb_3 \PP^2 : \textup{$\Gamma$ is collinear with $Q$}\}]\\
p &= [\{\Gamma \in \Hilb_3 \PP^2 : \exists d \subset \Gamma \textup{ of length 2 such that $d \subseteq L$}\}]\\
g_e &= [\{\Gamma \in \Hilb_3 \PP^2 : \Gamma \subseteq L\}]
 \end{align*}
and lastly define $\betarep$ to be the locus whose generic member $\Gamma$ is a nonreduced scheme given by the union of a length two subscheme supported on $L$ and a point supported on $L'$.

Then bases for the Chow groups of $\Hilb_3 \PP^2$ are given by:
\begin{center}
  \begin{tabular}{ c|l } 
   $k$ & $A_k(\Hilb_3 \PP^2)$ basis   \\ 
   \hline
   6 & [$\Hilb_3 \PP^2$]  \\ 
   5 & $H,F$ \\ 
   4 & $H^2, HF, \ell, p ,g$\\ 
   3 & $H^3, H^2F, H\ell, Hg, g_e, \betarep$ \\ 
   2 & $H^2\ell, H^2g,\ell^2,\ell g, \ell p$ \\ 
   1 & $H \ell p, H \ell^2$ \\ 
   0 & $\ell^3$\\
  \end{tabular}
  \end{center}
and for reference we include the intersection tables for (co)dimension 1 and (co)dimension 2.
\begin{figure}[h]\begin{center}
  \stepcounter{theorem}
  \begin{tabular}{ c|cc } 
     & $H\ell p$ & $H \ell^2$  \\ 
    \hline
    H & 1 & 1\\
    F & 1 & 2 
   \end{tabular}

   \begin{tabular}{ c|ccccc } 
    & $H^2 \ell$ & $H^2 g$ & $\ell^2$ & $\ell g$ & $\ell p$  \\ 
   \hline
  $H^2$ & 3 & 6 & 1 & 1 & 1\\
  $HC$ & 6 & 3 & 2 & 0 & 1\\
  $\ell$ & 1 & 1 & 1 & 0 & 0\\
  $g$ & 1 & -1 & 0 & 0 & 0\\
  $p$ & 1 & 0 & 0 & 0 & 0\\
  \end{tabular}
\end{center}
\caption{Intersection tables for $A_5(\Hilb_3 \PP^2) \times A_1(\Hilb_3 \PP^2)$ and $A_4(\Hilb_3 \PP^2) \times A_2(\Hilb_3 \PP^2)$ using the EL basis.}
\end{figure}

Next we introduce the Mallavibarrena \& Sols (MS) basis. In general, elements of this basis are defined by three sorts of conditons: being contained in a fixed line with a fixed point included, being contained in a fixed line, and being collinear with a fixed point. We will not go into detail about the MS basis in general, and will instead describe the classes that arise in the $\Hilb_3 \PP^2$ case and provide figures showing a general point $\Gamma \in \Hilb_3 \PP^2$ in each class. A more in-depth treatment can be found in \cite{MS90}.

For the remainder of this section, $L, L',L''$ are fixed distinct lines in $\PP^2$. $Q$ is a fixed point on $L$, and $R$ is a fixed point not contained in any of the three lines.

In dimension one and codimension one, the EL and MS bases are the same. In dimension five the basis elements are $H$, the locus of schemes incident to a line, and $F$, the locus of schemes with a length two subscheme that is collinear with a point $Q$. 
In dimension one the generators are $\phi = H \ell^2 $ and $\psi = H\ell p$.  $\phi$ is the locus whose general member contains two fixed points $Q,R$, and a third point on $L'$ (which notably contains neither $Q$ nor $R$). $\psi$ is the locus whose general member is a scheme containing two fixed points $Q,R$, and a third point on $L$. See Figure \ref{fig:co-dim1P2pics} for pictures of a general member of each scheme.
\begin{figure}[h]
  \stepcounter{theorem}
  \[
\begin{tikzpicture}[scale=.70]
  \draw (0,2.5)--(0,0);
  \draw (0,1) circle (.13);
  \draw (.5,1.5) circle (.13);
  \draw (.5,.5) circle (.13);

  \node at (.25,-.4) {$H$};
\end{tikzpicture}
\quad \quad \quad \quad
\begin{tikzpicture}[scale=.70]
  \draw[dashed] (0,2.5)--(0,0);
  \node at (0,2.25) {$\times$};

  \draw (.5,1) circle (.13);
  \draw (0,1.5) circle (.13);
  \draw (0,.5) circle (.13);

  \node at (.25,-.4) {$F$};
\end{tikzpicture}
\quad \quad \quad \quad \quad
\begin{tikzpicture}[scale=.70]
  \draw (.5,2.5)--(.5,0);

  \draw (.5,1) circle (.13);
  \filldraw (0,1.5) circle (.13);
  \filldraw (0,.5) circle (.13);

  \node at (.25,-.4) {$\phi$};
\end{tikzpicture}
\quad \quad \quad \quad \quad
\begin{tikzpicture}[scale=.70]
  \draw (0,2.5)--(0,0);

  \filldraw (.5,1) circle (.13);
  \filldraw (0,1.5) circle (.13);
  \draw (0,.5) circle (.13);

  \node at (.25,-.4) {$\psi$};
\end{tikzpicture}
\]
\caption{Pictures showing a general member of each of $H, F, \phi, \psi$. A solid point denotes a fixed point that must be included in the scheme; an $\times$ denotes a fixed point that is not necessarily in the support. A solid line denotes a fixed line; a dashed line indicates a line that can vary, and is used for our purposes to show a collinearity requirement.}
\label{fig:co-dim1P2pics}
\end{figure}
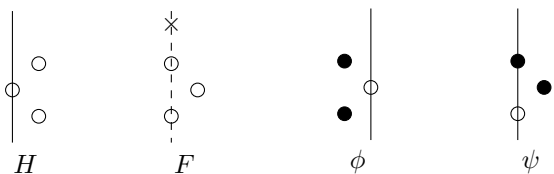

We now turn to dimension 2 and codimension 2. The five basis elements of $A_4(\Hilb_3 \PP^2)$ are as follows. $A$ is the locus of schemes collinear with $Q$. $B$ is the locus whose general member is reduced with a length two subscheme collinear with $Q$, and the remaining point lying on $L$. $C$ is the locus whose general member is a reduced scheme with one point on $L$ and one point on $L'$. $D$ is the locus of schemes with a length two subscheme contained in $L$. $E$ is the locus of schemes containing $Q$.

The five basis elements of $A_2(\Hilb_3 \PP^2)$ are as follows. $\alpha$ is the locus of schemes contained in $L$ and containing $Q$. $\beta$ is the locus whose general member is a reduced scheme containing two points in $L$, with one of the points being $Q$, and with a third point on $L'$. $\gamma$ is the locus whose general member is a reduced scheme containing $Q$, a point on $L'$, and a point on $L''$. $\delta$ is the locus that can be written as a union of $Q$ and a length two scheme contained in $L'$. $\epsilon$ is the locus of schemes containing $Q,R$ in the support. See Figure \ref{fig:co-dim2P2pics} for pictures of a general member of each scheme.

\begin{figure}[h]
  \stepcounter{theorem}
  \[
    \begin{tikzpicture}[scale=.70]
      \draw[dashed] (0,2.5)--(0,0);
      \node at (0,2.25){$\times$};
    
      \draw (0,.5) circle (.13);
      \draw (0,1) circle (.13);
      \draw (0,1.5) circle (.13);
    
      \node at (0,-.4) {$A$};
    \end{tikzpicture}
    \quad \quad \quad \quad \quad
    \begin{tikzpicture}[scale=.70]
      \draw (0,2.5)--(0,0);
      \draw[dashed] (.5,2.5)--(.5,0); 
      \node at (.5,2.25) {$\times$};
    
      \draw (0,1) circle (.13);
      \draw (.5,1.5) circle (.13);
      \draw (.5,.5) circle (.13);
    
      \node at (.25,-.4) {$B$};
    \end{tikzpicture}
    \quad \quad \quad \quad \quad
    \begin{tikzpicture}[scale=.70]
      \draw (0,2.5)--(0,0);
      \draw (.5,2.5)--(.5,0);
    
      \draw (0,1.5) circle (.13);
      \draw (.5,1) circle (.13);
      \draw (1,.5) circle (.13);
    
      \node at (.5,-.4) {$C$};
    \end{tikzpicture}
    \quad \quad \quad \quad \quad
    \begin{tikzpicture}[scale=.70]
      \draw (0,2.5)--(0,0);
    
      \draw (.5,1) circle (.13);
      \draw (0,1.5) circle (.13);
      \draw (0,.5) circle (.13);
    
      \node at (.25,-.4) {$D$};
    \end{tikzpicture}
    \quad \quad \quad \quad \quad
    \begin{tikzpicture}[scale=.70]
    
      \filldraw (0,1) circle (.13);
      \draw (.5,1.5) circle (.13);
      \draw (.5,.5) circle (.13);
    
      \node at (.25,-.4) {$E$};
    \end{tikzpicture}
    \]
    \\
    \[
    \begin{tikzpicture}[scale=.70]
      \draw (0,2.5)--(0,0);
    
      \draw (0,.5) circle (.13);
      \draw (0,1) circle (.13);
      \filldraw (0,1.5) circle (.13);
    
      \node at (0,-.4) {$\alpha$};
    \end{tikzpicture}
    \quad \quad \quad \quad \quad
    \begin{tikzpicture}[scale=.70]
      \draw (0,2.5)--(0,0);
      \draw (.5,2.5)--(.5,0); 
    
      \draw (.5,1) circle (.13);
      \filldraw (0,1.5) circle (.13);
      \draw (0,.5) circle (.13);
    
      \node at (.25,-.4) {$\beta$};
    \end{tikzpicture}
    \quad \quad \quad \quad \quad
    \begin{tikzpicture}[scale=.70]
      \draw (.5,2.5)--(.5,0);
      \draw (1,2.5)--(1,0);
    
      \filldraw (0,1.5) circle (.13);
      \draw (.5,1) circle (.13);
      \draw (1,.5) circle (.13);
    
      \node at (.5,-.4) {$\gamma$};
    \end{tikzpicture}
    \quad \quad \quad \quad \quad
    \begin{tikzpicture}[scale=.70]
      \draw (.5,2.5)--(.5,0);
    
      \filldraw (0,1) circle (.13);
      \draw (.5,1.5) circle (.13);
      \draw (.5,.5) circle (.13);
    
      \node at (.25,-.4) {$\delta$};
    \end{tikzpicture}
    \quad \quad \quad \quad \quad
    \begin{tikzpicture}[scale=.70]
    
      \draw (.5,1) circle (.13);
      \filldraw (0,1.5) circle (.13);
      \filldraw (0,.5) circle (.13);
    
      \node at (.25,-.4) {$\epsilon$};
    \end{tikzpicture}
    \]
\caption{General members of each of the basis elements of $A_4(\Hilb_3 \PP^2), A_2 (\Hilb_3 \PP^2)$.}
\label{fig:co-dim2P2pics}
\end{figure}
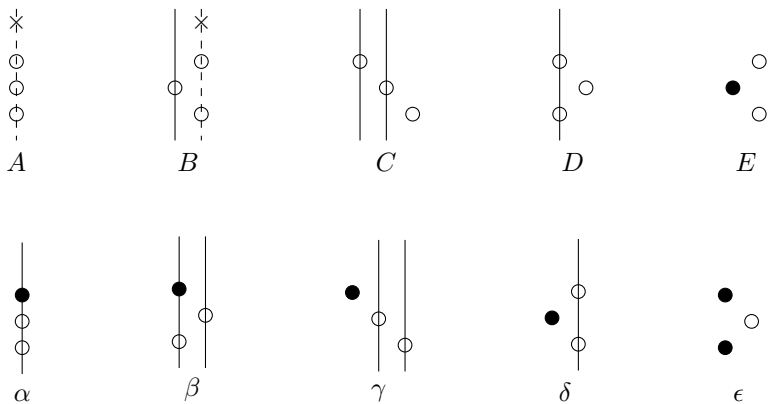
The intersection pairings between these bases are given below.
\begin{table}[h]
  \stepcounter{theorem}
\begin{center}
  \begin{tabular}{ c|ccccc } 
   & $\alpha$ & $\beta$ & $\gamma$ & $\delta$ &$\epsilon$  \\ 
  \hline
    $A$ & 0 & 0 & 1 & 0 & 0 \\
    $B$ & 0 & 1 & 2 & 1 & 0 \\
    $C$ & 1 & 2 & 2 & 1 & 0 \\
    $D$ & 0 & 1 & 1 & 0 & 0\\
    $E$ & 0 & 0 & 0 & 0 & 1\\
 \end{tabular}
\end{center}
\caption{Intersection matrix for $A_4(\Hilb_3 \PP^2) \times A_2(\Hilb_3 \PP^2)$.}
\label{table:co-dim2P3}
\end{table}
We now describe the six basis elements of $A_3(\Hilb_3 \PP^2)$. $U$ is the locus of schemes containing $Q$ and incident to $L'$. $V$ is the locus of schemes whose general member is the union of a fixed point $Q$ and a length two subscheme collinear with $R$. $W$ is the locus of schemes whose general member is a reduced subscheme given by the union of one point each on $L,L',L''$. $X$ is the locus of schemes whose general member is the union of a length two subscheme contained in $L$ and a point contained in $L'$. $Y$ is locus of schemes contained in $L$. $Z$ is the locus of schemes whose general member contains two points on $L$, one of which is $Q$. See Figure \ref{fig:co-dim3P2pics} for a picture describing a general member of each locus, and Table \ref{table:co-dim3P2} for the intersection products. 

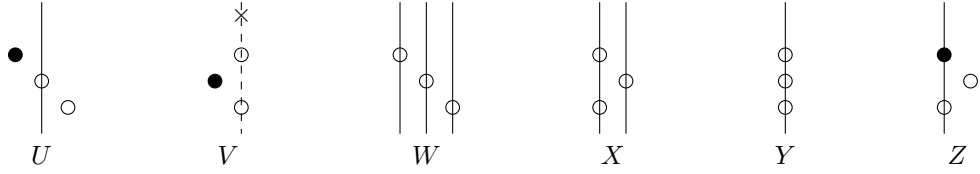
\begin{figure}[h]
  \stepcounter{theorem}
  \[
    \begin{tikzpicture}[scale=.70]
      \draw (.5,2.5)--(.5,0);
    
      \filldraw (0,1.5) circle (.13);
      \draw (.5,1) circle (.13);
      \draw (1,.5) circle (.13);
    
      \node at (.5,-.4) {$U$};
    \end{tikzpicture}
    \quad \quad \quad \quad \quad
    \begin{tikzpicture}[scale=.70]
      \draw[dashed] (.5,2.5)--(.5,0);
      \node at (.5,2.25){$\times$};
    
      \filldraw (0,1) circle (.13);
      \draw (.5,1.5) circle (.13);
      \draw (.5,.5) circle (.13);
    
      \node at (.25,-.4) {$V$};
    \end{tikzpicture}
    \quad \quad \quad \quad \quad
    \begin{tikzpicture}[scale=.70]
      \draw (0,2.5)--(0,0);
      \draw (.5,2.5)--(.5,0);
      \draw (1,2.5)--(1,0);
    
      \draw (0,1.5) circle (.13);
      \draw (.5,1) circle (.13);
      \draw (1,.5) circle (.13);
    
      \node at (.5,-.4) {$W$};
    \end{tikzpicture}
    \quad \quad \quad \quad \quad
    \begin{tikzpicture}[scale=.70]
      \draw (0,2.5)--(0,0);
      \draw (.5,2.5)--(.5,0);
    
      \draw (.5,1) circle (.13);
      \draw (0,1.5) circle (.13);
      \draw (0,.5) circle (.13);
    
      \node at (.25,-.4) {$X$};
    \end{tikzpicture}
    \quad \quad \quad \quad \quad
    \begin{tikzpicture}[scale=.70]
      \draw (0,2.5)--(0,0);
    
      \draw (0,.5) circle (.13);
      \draw (0,1) circle (.13);
      \draw (0,1.5) circle (.13);
    
      \node at (0,-.4) {$Y$};
    \end{tikzpicture}
    \quad \quad \quad \quad \quad
    \begin{tikzpicture}[scale=.70]
      \draw (0,2.5)--(0,0);
    
      \draw (.5,1) circle (.13);
      \filldraw (0,1.5) circle (.13);
      \draw (0,.5) circle (.13);
    
      \node at (.25,-.4) {$Z$};
    \end{tikzpicture}
    \]
\caption{General members of the basis elements of $A_3(\Hilb_3 \PP^2)$}
\label{fig:co-dim3P2pics}
\end{figure}
\begin{table}[h]
  \stepcounter{theorem}
\begin{center}
  \begin{tabular}{ c|cccccc } 
   & $U$ & $V$ & $W$ & $X$ &$Y$ & $Z$ \\ 
  \hline
 $U$ & 1 & 1 & 0 & 0 & 0 & 1\\
 $V$ & 1 & 1 & 0 & 0 & 0 & 0\\
 $W$ & 0 & 0 & 6 & 3 & 1 & 0\\
 $X$ & 0 & 0 & 3 & 1 & 0 & 0\\
 $Y$ & 0 & 0 & 1 & 0 & 0 & 0\\
 $Z$ & 1 & 0 & 0 & 0 & 0 & 1
 \end{tabular}
\end{center}
\caption{Intersection table for $A_3(\Hilb_3 \PP^2) \times A_3(\Hilb_3 \PP^2)$}
\label{table:co-dim3P2}
\end{table}
\begin{proposition}\label{prop:P2-conversion} We have the following change of basis between the EL and MS bases in $A_2(\Hilb_3 \PP^2)$:
\begin{align*}
H^2\ell &=  \gamma + \varepsilon \\
H^2 g &= 3 \alpha + \beta - \gamma + 2\delta + \epsilon\\ 
\ell^2 &= \epsilon\\
\ell g &= \alpha\\
\ell p &= \delta
\end{align*}
Correspondingly, we have the following conversion between the EL and MS bases in $A_4(\Hilb_3 \PP^2)$.
\begin{align*}
H^2 &= C+E\\
HC &= B+2D+2E\\
\ell &= E\\
g &= A\\
p &= D
\end{align*}
\end{proposition}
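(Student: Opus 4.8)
The plan is to exploit Proposition \ref{prop:chow-hom}: since numerical and rational equivalence agree, a class in $A_2(\Hilb_3\PP^2)$ is determined by its intersection numbers against a basis of $A_4(\Hilb_3\PP^2)$, and conversely. Both asserted conversions are therefore encoded by the single perfect pairing $A_4 \times A_2 \to \Q$, of which we already possess two matrix presentations: the pairing of the EL bases against each other (the intersection table recorded above) and the pairing of the MS bases against each other (Table \ref{table:co-dim2P3}). These two matrices alone do \emph{not} determine the change of basis, since $M_{\mathrm{EL}} = Q^{\mathsf T} M P$ has many solutions $(P,Q)$; the essential new input is a single cross-pairing matrix.

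Concretely, for the $A_2$ conversion I would compute the $5\times 5$ matrix $N$ whose entries are the intersection numbers of the MS basis $\{A,B,C,D,E\}$ of $A_4$ with the EL basis $\{H^2\ell, H^2g, \ell^2, \ell g, \ell p\}$ of $A_2$. Writing $M$ for the MS-against-MS matrix of Table \ref{table:co-dim2P3} and writing each EL class as $\sum_j P_{jk}(\mathrm{MS}_j)$, the defining property of the pairing gives $N = M P$, so $P = M^{-1} N$ (the MS classes form a basis, so $M$ is invertible), and the asserted formulas are precisely the columns of this product. Several entries of $N$ require essentially no work: comparing the geometric descriptions in Figure \ref{fig:co-dim2P2pics} shows directly that $\ell^2 = \epsilon$ (schemes through two fixed points), $\ell g = \alpha$ (schemes on a fixed line through a fixed point), and $\ell p = \delta$ (a fixed point together with a length-two subscheme on a fixed line), so only the rows $H^2\ell$ and $H^2 g$ genuinely need to be expanded.

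To obtain the remaining entries of $N$ I would place the fixed points $Q,R$ and the fixed lines $L,L',L''$ in general position and count the transverse intersection points, invoking the $\textup{PGL}_3$-action (as in the transversality arguments used elsewhere in the paper) to guarantee that the representing cycles meet properly. For the $A_4$ conversion I would proceed symmetrically, either by computing the companion cross-pairing of $\{H^2,HC,\ell,g,p\}$ against $\{\alpha,\beta,\gamma,\delta,\epsilon\}$, or---more economically---by using that the EL-to-MS changes of basis $P$ in $A_2$ and $Q$ in $A_4$ must be simultaneously compatible with both intersection matrices, i.e. $Q^{\mathsf T} M P = M_{\mathrm{EL}}$; having already determined $P$, this linear relation determines $Q$, and one cross-checks against one or two directly computed intersection numbers. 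Finally, the stated identities are verified by substituting the candidate coefficients back into the pairing against $A,B,C,D,E$ (resp. $\alpha,\beta,\gamma,\delta,\epsilon$) and matching all twenty-five numbers, which by Proposition \ref{prop:chow-hom} suffices.

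The main obstacle is the honest computation of the cross-intersection numbers, not the linear algebra. The loci in question are cut out by incidence and collinearity conditions, and their scheme-theoretic intersections can acquire excess or non-reduced components supported on the boundary (nonreduced) locus of $\Hilb_3\PP^2$; one must check that, for the general choice of $Q,R,L,L',L''$, the intersections are dimensionally proper and then account for each point with the correct multiplicity. Once transversality is secured the counts are elementary, and inverting $M$ to read off the coefficients is routine.
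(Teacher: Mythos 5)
Your linear-algebra framework is sound and is essentially the mechanism the paper also relies on: the pairing is perfect, so cross-pairings against a dual basis determine the conversion, and indeed $P = M^{-1}N$. The fatal problem is in how you propose to produce the entries of $N$. The crucial row is the one pairing $A$ against $H^2g$ (this is the only genuinely hard conversion; the paper itself notes that $H^2g$ is the sole nontrivial computation). But $A$ and $g$ are \emph{literally the same locus} (schemes collinear with a fixed point), and the entry in question is $g \cdot H^2 g = -1$, as recorded in the EL intersection table in Section 2. A negative number can never arise as a count of transverse intersection points of effective cycles, so no amount of general position can make your method work for this entry. Concretely: any representative of $A$ (schemes collinear with $Q'$) and any natural representative of $H^2 g$ (schemes collinear with $Q$, meeting $L_1$ and $L_2$) intersect in the positive-dimensional family of schemes supported on the line $\overline{QQ'}$ that contain the two points $\overline{QQ'}\cap L_1$ and $\overline{QQ'}\cap L_2$, with the third point roaming $\overline{QQ'}$. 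This excess component is present for \emph{every} choice of $Q, Q', L_1, L_2$, since the two collinearity conditions always interact this way; the $\textup{PGL}_3$ action only moves you within these families and cannot restore properness (Kleiman requires a transitive action, which is unavailable here). Note also that your diagnosis of where excess can occur is off: the offending component consists of \emph{reduced} collinear schemes, not schemes in the nonreduced boundary locus.

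The paper's proof is structured precisely to dodge this. It factors $H^2 g = H\cdot(Hg)$ and first computes $Hg$ by pairing it against the dimension-3 MS basis $U,\dots,Z$; those intersections \emph{are} dimensionally proper (the extra incidence condition $H$ inside $Hg$ cuts the collinear locus down before it ever meets another collinearity condition), giving $Hg = Z-U+V+3Y$ via the Gram matrix of Table \ref{table:co-dim3P2}. It then multiplies by the divisor $H$ using $Z\cdot H=\beta+\epsilon$, $U\cdot H=\gamma+\epsilon$, $Y\cdot H=\alpha$, and a proper pairing computation for $V\cdot H$ against $A,\dots,E$. If you want to salvage your more direct route, you would either need excess intersection theory for the $A$-row, or you could observe that since $A=g$, $D=p$, $E=\ell$ tautologically, the rows of $N$ indexed by $A,D,E$ can simply be read off from the EL-against-EL table (so only the $B$ and $C$ rows require honest transversality arguments, and those do work, with some care about which degenerate schemes actually lie in the closure of $C$). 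Either repair changes your proof in an essential way, so as written the proposal has a genuine gap.
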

\begin{proof}
The dimension 4 change of basis can be obtained by the dimension 2 change of basis. In dimension 2, the only nontrivial computation is $H^2g$. We compute its expression in the MS basis by first computing the class of $Hg$. We see that $Hg$ has intersection numbers:
\begin{center}
  \begin{tabular}{ c|cccccc } 
  & $U$ & $V$ & $W$ & $X$ & $Y$ & $Z$ \\
  \hline
  $Hg$ & 1 & 0 & 3 & 0 & 0 & 0
 \end{tabular}
\end{center}
Hence $Hg = (Z-U+V)+3Y$. Then $Z \cdot H = \beta + \epsilon$, $U \cdot H = \gamma + \epsilon$, and $Y \cdot H = \alpha$. $V \cdot H$ can be checked by computing the intersection products with the complementary dimension:
\begin{center}
  \begin{tabular}{ c|ccccc } 
  & $A$ & $B$ & $C$ & $D$ & $E$ \\
  \hline
  $V\cdot H$ & 0 & 2 & 2 & 0 & 1 
 \end{tabular}
\end{center}
yielding that $V \cdot H = 2 \delta + \epsilon$. This yields $H^2 g = 3 \alpha + \beta - \gamma + 2\delta + \epsilon$.
\end{proof}
\subsection{Techniques for computing the effective and nef cones}\label{subsec:techniques-for-eff-nef}

We give an overview of the methods in Ryan and Stathis's work \cite{RS21} computing some of the nef and effective cones of $\Hilb_3 \PP^2$.


In Theorem 5 and 6 of \cite{RS21}, the authors prove the following result. 
\begin{theorem}[Ryan, Stathis '21]\label{theorem:(co)dim2P2} We have:
\begin{align*}
  \Eff_2(\Hilb_3 \PP^2) &= \langle \alpha, \epsilon, -\alpha + \delta, -\alpha + \beta - \delta, \alpha - \beta + \gamma - \delta, -2\alpha + \beta -\gamma + 2\delta + \epsilon \rangle \\
  \Nef^2(\Hilb_3 \PP^2) &= \langle B,C,D,E,A+B,A+E \rangle
\end{align*}
And furthermore:
\begin{align*}
  \Eff_3(\Hilb_3 \PP^2) &= \langle Y, -U+V+Z, 3U-2V-W+4X-6Y-Z,\\
  & \quad \quad  W-3X+3Y,X-3Y,U-V,U-Z\rangle\\
  \Nef^3(\Hilb_3 \PP^2) &= \langle U,V,W,X,Z,V+Y,X+Y,2Y+Z\rangle.
\end{align*}
\end{theorem}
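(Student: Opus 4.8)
The plan is to pin down both cones simultaneously by the standard self-duality argument. Since $\Nef^k(\Hilb_3\PP^2) = \overline{\Eff}_k(\Hilb_3\PP^2)^\vee$ by definition, it suffices to produce a cone $\sigma$ spanned by effective classes and a cone $\tau$ spanned by nef classes with $\tau = \sigma^\vee$ under the intersection pairing. Indeed, once $\sigma \subseteq \overline{\Eff}_k$ and $\tau \subseteq \Nef^k$, dualizing the first inclusion gives $\Nef^k \subseteq \sigma^\vee = \tau \subseteq \Nef^k$, whence $\Nef^k = \tau$, and then $\overline{\Eff}_k = (\Nef^k)^\vee = \tau^\vee = \sigma^{\vee\vee} = \sigma$; the convention that effective and pseudoeffective coincide here removes the closure. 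I would run this argument verbatim for both the $(A_4, A_2)$ pairing and the $(A_3, A_3)$ pairing, taking $\sigma$ to be the listed effective generators and $\tau$ the listed nef generators, so that the identity $\tau = \sigma^\vee$ becomes a finite polyhedral computation off the intersection matrices in Tables \ref{table:co-dim2P3} and \ref{table:co-dim3P2}.

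\textbf{Effective side.} First I would exhibit an explicit subvariety for each effective generator and read off its class via the given tables. The basis loci $\alpha, \epsilon$ (and $Y$ in dimension three) are effective by definition. For the remaining generators I would identify the appropriate $\textup{PGL}_3(\C)$-orbit closures and degeneration loci in $\Hilb_3\PP^2$ — schemes whose length-two subscheme degenerates onto a fixed line or collapses to a fixed point, and so on — computing each class by pairing against the complementary basis, exactly as $H^2 g$ is computed in Proposition \ref{prop:P2-conversion}. This establishes $\sigma \subseteq \overline{\Eff}_k$.

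\textbf{Nef side.} To show each listed nef generator is nef I would argue that it is represented by a cycle moving in a family that sweeps out $\Hilb_3\PP^2$, so that a general member meets any effective $k$-cycle properly and the intersection number is nonnegative. The mechanism is a variant of Kleiman transversality applied to the configuration data: the loci $B, C, D, E$ (resp.\ $U, V, W, X, Z$) are cut out by incidences with fixed points and lines, on which $\textup{PGL}_3(\C)$ acts transitively, and translating this linear data produces the moving family. The subtlety — and the reason the generators occur as sums such as $A+B, A+E$ (resp.\ $V+Y, X+Y, 2Y+Z$) rather than as bare orbit classes — is that $\Hilb_3\PP^2$ is itself not homogeneous, so an individual orbit class can fail to be nef: one reads off Table \ref{table:co-dim2P3} that $A\cdot(-2\alpha+\beta-\gamma+2\delta+\epsilon) = -1 < 0$, so $A$ is not nef, whereas $A+B$ pairs to $0$ against that same effective class and sits on an extremal ray. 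The genuine content is therefore to verify positivity precisely for these mixed classes.

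I expect the main obstacle to be exactly this nef verification for the mixed boundary classes. Away from the reduced, distinct-point locus the moving family degenerates and naive transversality breaks down along the collinear and nonreduced strata, so one must control the excess-intersection contributions there and show that the total intersection degree stays nonnegative. Once nefness is secured, the remaining step — checking $\tau = \sigma^\vee$ — is a mechanical dual-cone computation from the two intersection matrices, after which the duality argument of the first paragraph closes out all four equalities at once, uniformly in the dimension/codimension two and dimension/codimension three cases.
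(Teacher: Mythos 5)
Your proposal is correct and is essentially the same approach as the actual proof: the paper itself does not reprove this statement but simply cites Theorems 5 and 6 of \cite{RS21}, and the Ryan--Stathis argument (which this paper replicates for $\PP^3$ in Section \ref{section:nef-eff-Hilb3P3}) proceeds exactly as you outline --- explicit effective representatives coming from the $\textup{PGL}_3(\C)$-orbit analysis, nefness of the mixed classes such as $A+B$ and $2Y+Z$ via the finitely-many-orbits transversality lemma (Lemma \ref{lemma:kleiman-analog}), and a final polyhedral duality computation against the intersection tables. The only refinement to note is that your closing ``excess-intersection'' step is not handled by a general excess-intersection argument but precisely by that lemma: one first computes the effective cones of the orbit closures (collinear, nonreduced, and totally nonreduced loci) and then pairs each candidate nef class against their generators wherever it meets an orbit in one dimension more than expected, which is where the bulk of the work in \cite{RS21} lies.
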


In a variety with a transitive action by an algebraic group $G$,   Kleiman's transversality theorem is a useful tool for determining nef and effective cones. For example, Kleiman's allows one to determine that the nef and effective cones of a Grassmannian are spanned by the Schubert classes. While the action of $\textup{PGL}_4(\mathbb{C})$ on $\Hilb_3 \PP^3$ is not transitive, it does have finitely many orbits, which is enough to yield the following useful tool.

\begin{lemma}\label{lemma:kleiman-analog}Let $G$ be an infinite connected algebraic group acting on a smooth variety $X$ with finitely many orbits $\OO_1,\dots,\OO_m$. Suppose $Y$ is a subvariety of $X$ of codimension $k$. If $Y$ intersects $\OO_1,\dots,\OO_{\ell}$ in the expected dimension, intersects $\OO_{\ell+1},\dots, \OO_{m}$ in dimension one higher than expected, and intersects every effective $k$-cycle in $\overline{\OO_{\ell+1}},\dots,\overline{\OO_{m}}$ non-negatively, then $Y$ is nef.
\end{lemma}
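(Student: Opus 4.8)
The plan is to verify nefness directly from the definition: I must show that $[Y]\cdot[Z]\ge 0$ for every class $[Z]\in\overline{\Eff}_k(X)$. By continuity of the intersection pairing and the fact that $\overline{\Eff}_k(X)$ is the closure of the cone generated by classes of $k$-dimensional subvarieties, it suffices to treat an irreducible $k$-dimensional subvariety $Z$. The first reduction is that I may replace $Y$ by a general translate $gY$: since $G$ is connected, the collection $\{gY\}_{g\in G}$ is an algebraic family over the connected base $G$, so $[gY]=[Y]$ in $N^k(X)$ and hence $[gY]\cdot[Z]=[Y]\cdot[Z]$ for every $g$.

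Next I would split into cases according to which orbit contains the generic point of $Z$; there is a unique such orbit since the $\OO_i$ partition $X$ and $Z$ is irreducible. Write $\OO_{i_0}$ for this orbit, so that $Z\subseteq\overline{\OO_{i_0}}$, $\dim(Z\cap\OO_{i_0})=k$, and $\dim(Z\cap\OO_i)<k$ for $i\ne i_0$. If $\OO_{i_0}$ is one of the bad orbits $\OO_{\ell+1},\dots,\OO_m$, then $Z$ is an effective $k$-cycle inside $\overline{\OO_{i_0}}$, and the non-negativity hypothesis of the lemma applies verbatim to give $[Y]\cdot[Z]\ge 0$. This is exactly the case in which the moving argument below breaks down, and it is where the extra hypothesis does its work.

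The substantive case is when $\OO_{i_0}$ is a good orbit. Here the strategy is to apply Kleiman's transversality theorem orbit by orbit. On each $\OO_i$ the group $G$ acts transitively, so for general $g$ the translate $g(Y\cap\OO_i)$ meets $Z\cap\OO_i$ in dimension at most $\dim(Y\cap\OO_i)+\dim(Z\cap\OO_i)-\dim\OO_i$, or is empty. Feeding in the hypothesised dimensions, namely codimension $k$ in $\OO_i$ for the good orbits and codimension $k-1$ for the bad ones, I would check that over $\OO_{i_0}$ the intersection has dimension $0$, over the other good orbits it is empty, and over every bad orbit $\OO_i$ it has dimension $\dim(Z\cap\OO_i)-k+1\le 0$, using that $\dim(Z\cap\OO_i)\le k-1$ there. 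Since there are only finitely many orbits, a single general $g$ satisfies all of these bounds at once, so $gY\cap Z$ is finite. The expected dimension of $gY\cap Z$ in $X$ is $\dim Y+\dim Z-\dim X=(\dim X-k)+k-\dim X=0$, so finiteness means the intersection is proper; then $[gY]\cdot[Z]=\sum_P m_P[P]$ over the finitely many intersection points, each multiplicity $m_P$ being a positive integer. Hence $[Y]\cdot[Z]=[gY]\cdot[Z]=\sum_P m_P\ge 0$.

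Combining the two cases gives $[Y]\cdot[Z]\ge 0$ for all irreducible $k$-dimensional $Z$, which is precisely nefness of $Y$. I expect the main obstacle to be the bookkeeping of the dimension counts guaranteeing that $gY\cap Z$ is finite in the good-orbit case: one must confirm that the \emph{only} way an orbit-piece can be positive-dimensional is when the generic point of $Z$ lies in a bad orbit, which is exactly the situation excised by the first dichotomy and covered by the non-negativity hypothesis. A secondary point needing care is the justification that intersection multiplicities in a proper, dimension-zero intersection are positive, so that no transversality statement is actually required to conclude positivity of the count.
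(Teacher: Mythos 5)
Your proof is correct. The paper itself does not spell out an argument---its proof of this lemma is just the citation ``See Lemma 3 in [RS21]''---and your argument is essentially the one given there: reduce to an irreducible $k$-dimensional $Z$, split according to whether the orbit containing the generic point of $Z$ is one of $\OO_{\ell+1},\dots,\OO_m$ (where the non-negativity hypothesis applies verbatim) or one of $\OO_1,\dots,\OO_\ell$ (where Kleiman's transversality applied orbit-by-orbit to a general translate $gY$, together with the dimension counts codimension $k$ on good orbits and $k-1$ on bad ones against $\dim(Z\cap\OO_i)\le k-1$ for $i\ne i_0$, forces $gY\cap Z$ to be finite, hence a proper intersection on the smooth $X$ with positive multiplicities).
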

\begin{proof}
See Lemma 3 in \cite{RS21}.
\end{proof}

In Ryan and Stathis's work, understanding the nef and effective cones of the closures of the orbits of the $\textup{PGL}_3(\C)$ action on $\Hilb_3 \PP^2$ is key to working out $\Nef_{4}, \Nef_3, \Eff_2, \Eff_3$ of $\Hilb_3 \PP^2$. Thus we too work out many effective and nef cones of the orbit closures in Section \ref{section:orbits}. In particular, this lemma suggests that we should compute the effective cones of the orbits, take the cone $\mathcal{C}$ generated by all the generators of the aforementioned cones, and use Lemma \ref{lemma:kleiman-analog} to show that the dual cone $\mathcal{C}^{\vee}$ consists of nef classes, which would prove we have determined the nef and effective cones. 

We also wish to identify these ``positive'' cones computed in $\Hilb_3 \PP^2$ as part of the nef and effective cones in $\Hilb_3 \PP^3$. To do this we will need a notion of ``lifting'' cycles in $\Hilb_3 \PP^2$, which we will define in Section \ref{section:backgroundP3}. Lifting cycles will also help us produce a good basis for applying Lemma \ref{lemma:kleiman-analog}, since the lemma only applies to effective classes.
\section{The Chow ring of $\Hilb_3 \PP^3$}\label{section:backgroundP3}
\subsection{Basic structure} $\Hilb_3 \PP^3$ is an irreducible, projective, smooth scheme and its Chow ring is isomorphic to the cohomology ring \cite{EL83}. Any $\Gamma \in \Hilb_3 \PP^3$ is contained in a plane in $\PP^3$, and if $\Gamma$ is not contained in a line that plane is unique. This motivates considering the incidence variety $\wt{\Hilb}_3\PP^3$.
\[\wt{\Hilb}_3\PP^n = \{(\Gamma, \Pi) \in \Hilb_3 \PP^3 \times \mathbb{G}(2,n) : \Gamma \subseteq \Pi\}.\]
Note that $\wt{\Hilb}_3\PP^n$ is a $\Hilb_3 \PP^2$-bundle over $\mathbb{G}(2,n)$. Rossell\'{o}-Llompart uses this presentation to give an analogue of the EL basis for $\Hilb_3 \PP^3$ in \cite{RL90}. We give an abbreviated primer on this basis for the reader. Let $\Al_3\PP^n$ denote the locus of collinear schemes in $\Hilb_3 \PP^n$. 
\begin{proposition}
Let $q: \wt{\Hilb}_3 \PP^n \to \Hilb_3 \PP^n$ be the projection map onto the first coordinate. Then $q$ is the blowup of $\Hilb_3 \PP^n$ along $\Al_3 \PP^n$.
\end{proposition}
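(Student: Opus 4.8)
The plan is to realize $q$ as the closure of the graph of the rational ``span'' map, and then to identify its indeterminacy locus as a degeneracy locus of the \emph{expected} codimension; this pins down the scheme structure near the collinear locus and forces $q$ to be the blowup.

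First I would record the numerology and smoothness. Since $\wt{\Hilb}_3\PP^n$ is a $\Hilb_3\PP^2$-bundle over $\mathbb{G}(2,n)$, it is smooth and irreducible of dimension $3(n-2)+6 = 3n = \dim \Hilb_3\PP^n$. Because every non-collinear $\Gamma$ lies in a unique plane, $q$ restricts to an isomorphism over the open set $U = \Hilb_3\PP^n \setminus \Al_3\PP^n$, with inverse $\Gamma \mapsto (\Gamma, \langle\Gamma\rangle)$; hence $q$ is proper and birational. Over a collinear $\Gamma \subseteq L$ the fibre of $q$ is the space of planes containing $L$, a $\PP^{n-2}$, so $q^{-1}(\Al_3\PP^n)$ is the relative collinear locus, a divisor in each $\Hilb_3\PP^2$-fibre and hence a (Cartier, as $\wt{\Hilb}_3\PP^n$ is smooth) divisor $E$. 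A dimension count gives $\operatorname{codim}\Al_3\PP^n = 3n - (2n+1) = n-1$, so the fibre $\PP^{n-2}$ of $q$ already has the dimension $\PP^{(n-1)-1}$ expected of a blowup along $\Al_3\PP^n$.

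Next I would build the span map from a universal bundle map. Let $\mathcal{E}$ be the rank-$3$ bundle on $\Hilb_3\PP^n$ with fibre $H^0(\mathcal{O}_\Gamma(1))$, and let $\rho\colon (\C^{n+1})^{*}\otimes \mathcal{O} \to \mathcal{E}$ be the universal restriction map. Off $\Al_3\PP^n$ the map $\rho$ is surjective and $\operatorname{Im}(\rho^{*}) \subseteq \C^{n+1}\otimes\mathcal{O}$ is the rank-$3$ subbundle cutting out $\langle\Gamma\rangle$, so the maximal ($3\times 3$) minors of $\rho$ define the rational map $\sigma\colon \Hilb_3\PP^n \dashrightarrow \mathbb{G}(2,n)$, $\Gamma \mapsto \langle\Gamma\rangle$, in Plücker coordinates, and $\wt{\Hilb}_3\PP^n$ is exactly the closure of its graph. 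The locus where $\rho$ drops to rank $\le 2$ is $\Al_3\PP^n$, and it has the expected codimension $\bigl((n+1)-2\bigr)(3-2) = n-1$; together with the smoothness of $\Al_3\PP^n$ this is the crucial input.

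Finally I would upgrade this to the blowup statement. Since the blowup of $\Hilb_3\PP^n$ along an ideal is the closure of the graph of the map that ideal defines, it suffices to show that the maximal minors of $\rho$ generate exactly $\mathcal{I}_{\Al_3\PP^n}$ with its reduced structure; by the theory of degeneracy loci of expected codimension this holds because $\Al_3\PP^n$ is smooth of codimension $n-1$, and then $\wt{\Hilb}_3\PP^n = \operatorname{Bl}_{\Al_3\PP^n}\Hilb_3\PP^n$. Equivalently, one shows that $q^{-1}\mathcal{I}_{\Al_3\PP^n}\cdot\mathcal{O}_{\wt{\Hilb}_3\PP^n}$ is invertible (equal to $\mathcal{O}(-E)$), producing a factorization $\wt{\Hilb}_3\PP^n \to \operatorname{Bl}_{\Al_3\PP^n}\Hilb_3\PP^n \to \Hilb_3\PP^n$; this factoring morphism is proper and birational between smooth varieties, an isomorphism over $U$, and bijective on exceptional fibres (both are $\PP^{n-2}$-bundles over $\Al_3\PP^n$), hence an isomorphism by Zariski's Main Theorem. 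The main obstacle is precisely this scheme-theoretic step: verifying that the pullback of $\mathcal{I}_{\Al_3\PP^n}$ becomes invertible, equivalently that the $3\times 3$ minors of $\rho$ generate the full reduced ideal of the collinear locus. This is where one must use an explicit local model for $\rho$ near $\Al_3\PP^n$ and the fact that $\Al_3\PP^n$ is smooth of exactly the expected codimension $n-1$; once this determinantal analysis is in hand, the remaining steps are formal.
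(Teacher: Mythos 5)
Your proposal takes a genuinely different route from the paper, because the paper does not actually prove this proposition: its entire proof is the citation ``See \cite{EL83}'', deferring to Elencwajg--Le Barz, where the result originates. What you propose instead is a self-contained determinantal argument: realize $\wt{\Hilb}_3\PP^n$ as the closure of the graph of the span map (legitimate, since $\wt{\Hilb}_3\PP^n$ is irreducible, being a $\Hilb_3\PP^2$-bundle over $\mathbb{G}(2,n)$), identify $\Al_3\PP^n$ set-theoretically as the rank-$\le 2$ degeneracy locus of the evaluation map $\rho$, observe that this locus has the expected codimension $(n+1-2)(3-2)=n-1$, and then use the standard fact that the blowup of an ideal generated by global sections of a line-bundle twist is the closure of the graph of the rational map those sections define. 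Your numerology is right ($\dim\wt{\Hilb}_3\PP^n=3n$, fibres $\PP^{n-2}$ over the collinear locus, $\operatorname{codim}\Al_3\PP^n=n-1$), and the identification of the Pl\"ucker point of $\wedge^3\rho$ with the span is correct. What this buys over the paper's approach is an actual proof one can check, at the cost of the scheme-theoretic verification below.

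That verification is the one place where your reasoning, as literally stated, is not valid: ``by the theory of degeneracy loci of expected codimension this holds because $\Al_3\PP^n$ is smooth of codimension $n-1$'' is a false principle. Expected codimension gives Cohen--Macaulayness of the determinantal scheme (Eagon--Northcott/Hochster--Eagon), but smoothness of the \emph{support} plus expected codimension does not force the determinantal structure to be reduced: on $\A^2$ the map $\operatorname{diag}(1,x^2)\colon\OO^2\to\OO^2$ has rank-$\le 1$ locus of expected codimension $1$, empty rank-$0$ locus, and smooth support, yet its minor ideal is $(x^2)$. So the local model you defer to is the mathematical content, not a formality. Fortunately it goes through: near a general point of $\Al_3\PP^n$ (three distinct points on a line, say the $x_1$-axis), the $3\times 3$ minors of $\rho$ built from the columns $x_0,x_1,x_j$ for $j\ge 2$ have linear terms $b_1^{(j)}-2b_2^{(j)}+b_3^{(j)}$ in the transverse coordinates, which are $n-1$ independent linear forms; hence the Jacobian of the minor ideal has rank $n-1$, the determinantal scheme is generically smooth, and being Cohen--Macaulay and generically reduced it is reduced. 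With that, blowup $=$ graph closure $=\wt{\Hilb}_3\PP^n$ and you are done; your alternative ending via Zariski's Main Theorem is then dispensable, which is just as well since its claim that the factoring map is bijective on exceptional fibres is itself asserted without proof.
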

\begin{proof}
  See \cite{EL83}.
\end{proof}
Consider the following commutative diagram, using $\wt{\Al}_3\PP^n$ to denote the proper transform of $\Al_3 \PP^n$:
\[\begin{tikzcd}
  \wt{\Al}_3 \PP^n \arrow[r,"j"] \arrow[d,"q'"]& \wt{\Hilb}_3 \PP^n \arrow[d,"q"]\\
  \Al_3 \PP^n \arrow[r,"h"]& \Hilb_3 \PP^n
\end{tikzcd}\]
Roberts \cite{R88} as well as Rossell\'{o}-Llompart and Xamb\'{o}-Descamps \cite{RLXD92} use this to generate the split exact sequences
\[0 \to \left(\Ker {q'}_*\right)_k \stackrel{j_*}{\to} A_k(\wt{\Hilb}_3 \PP^n) \stackrel{q_*}{\to} A_k(\Hilb_3 \PP^n) \to 0\]
for $k \ge 0$. Then one can use bases of $\left(\Ker {q'}_*\right)_k$, $A_k(\wt{\Hilb}_3 \PP^n)$ to write a basis of $A_k(\Hilb_3 \PP^n)$; see Section 3 in \cite{RL90} for more details. 

To define the basis of $\Hilb_3 \PP^3$ given in \cite{RL90}, we first need to define some loci. Let $\Pi,\Pi'$ denote fixed planes, $L$ denote a fixed line, and $Q$ denote a fixed point.
\begin{align*}
H &= \{\Gamma \in \Hilb_3 \PP^3 : \Gamma \cap \Pi \ne \emptyset\}\\
P &= \{\Gamma \in \Hilb_3 \PP^3 : \Gamma \textup{ is coplanar with }Q\}\\
P_3 &= \{\Gamma \in \Hilb_3 \PP^3: \Gamma \subseteq \Pi\}\\
P_2 &= \{\Gamma \in \Hilb_3 \PP^3 : \Gamma \textup{ coplanar with }L\}\\
\ell &=\{\Gamma \in \Hilb_3 \PP^3 : \Gamma \cap L \ne \emptyset\}\\
p &= \{\Gamma \in \Hilb_3 \PP^3 : \exists d \subseteq \Gamma \text{ of length 2 such that } d \subseteq \Pi\}
\end{align*}
In addition, we define $\betarep$ to be the locus of schemes in $\Hilb_3 \PP^3$ whose general member is the union of a length two scheme contained in $\Pi$ and a point contained in $\Pi'$. It will also be useful to define
\[F = \{\Gamma \in \Hilb_3 \PP^3 : \exists d \subseteq \Gamma \text{ of length 2 such that } d \textup{ coplanar with $L$}\}.\]
\begin{lemma}\label{lem:F=P+H}The following identity holds in $A_8(\Hilb_3 \PP^3)$.
\begin{equation}
F = H + P
\end{equation}
\end{lemma}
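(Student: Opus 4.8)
The plan is to invoke Proposition \ref{prop:chow-hom}: since numerical and rational equivalence coincide on $\Hilb_3\PP^3$, a divisor class in $A_8(\Hilb_3\PP^3)$ is determined by its intersection numbers against a basis of the curve classes $A_1(\Hilb_3\PP^3)$. As recorded in the remark following the main theorems, $\Eff_1(\Hilb_3\PP^3)$ has two generators, so $A_1$ has rank $2$; hence it suffices to produce two curves spanning $A_1$ and to verify $F = H + P$ after intersecting with each. All three of $F, H, P$ are divisors, so each side lies in the rank-two space $A_8$, and equality of the two functionals ``intersect with $C_1$'' and ``intersect with $C_2$'' forces equality of classes as soon as $C_1, C_2$ span $A_1$.

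I would take two explicit test curves. Let $C_1$ be the pencil $\Gamma_t = \{Q_1, Q_2, m_t\}$, where $Q_1, Q_2$ are general points and $m_t$ ranges over a general line $\lambda$ chosen to miss $Q_1$, $Q_2$, and $\overline{Q_1 Q_2}$. Let $C_2$ be the family $\Gamma_s = \{P_0\} \cup d_s$, where $P_0, Q_0$ are general points of a general plane $\Sigma$ and $d_s$ is the length-two scheme supported at $Q_0$ whose tangent direction $s$ varies in $\PP(T_{Q_0}\Sigma) \cong \PP^1$, so that $\Gamma_s \subseteq \Sigma$; this $C_2$ is exactly one of the two generators of $\Eff_1(\Hilb_3\PP^3)$ in the remark. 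The computation then runs as follows. For $C_1$: one member lands on the fixed plane, so $H \cdot C_1 = 1$; the spanning plane $\langle Q_1, Q_2, m_t\rangle$ passes through the fixed point $Q$ for a single $t$, so $P \cdot C_1 = 1$; and among the three pairs $\{Q_1,Q_2\}, \{Q_1,m_t\}, \{Q_2,m_t\}$ only the latter two acquire a span meeting the fixed line $L$, each once, so $F \cdot C_1 = 2$. For $C_2$: the support $\{P_0, Q_0\}$ is fixed and avoids a general plane, so $H \cdot C_2 = 0$ (indeed $C_2$ is a fiber of the Hilbert--Chow morphism, which $H$ contracts); one checks $P \cdot C_2 = 1$ and $F \cdot C_2 = 1$. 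In both cases $F \cdot C_i = H \cdot C_i + P \cdot C_i$, and since the pairing matrix of $\{C_1, C_2\}$ against $\{H, P\}$ is nondegenerate, $\{C_1, C_2\}$ is a basis of $A_1$ and the identity follows.

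The step demanding the most care, and the main obstacle, is the treatment of the \emph{degenerate} members of each family when computing the contributions to $P$ and $F$, for the simple reason that a collinear scheme is automatically coplanar with every point and therefore lies on $P$. In $C_2$ the subscheme $\Gamma_s$ is collinear for exactly one direction $s$, namely when the tangent at $Q_0$ points along $\overline{P_0 Q_0}$, and it is precisely this collinear member that produces $P \cdot C_2 = 1$; one must confirm that this contact is transverse, so the count is $1$ and not higher. Likewise, for $F$ one must check at each claimed parameter value that the relevant length-two subscheme meets $L$ transversally and that no pair contributes unexpectedly (e.g. that the fixed pairs $\{Q_1, Q_2\}$ and $\{P_0, Q_0\}$, whose spans are fixed lines, miss $L$ for general choices). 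These transversality and multiplicity-one verifications are routine for sufficiently general $Q_1, Q_2, \lambda, \Sigma, P_0, Q_0, L, \Pi$, but they are the only place the argument can go wrong.

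As a conceptual sanity check I would note that restricting $F = H + P$ to a general fiber $\Hilb_3 W$ of the bundle $\wt{\Hilb}_3\PP^3 \to \mathbb{G}(2,3)$ recovers the corresponding $\Hilb_3\PP^2$ identity $F = H + [\Al_3\PP^2]$ of the Elencwajg--Le Barz classes: there $H$ restricts to the locus incident to the line $\Pi \cap W$, $F$ to the locus of pairs collinear with the point $L \cap W$, and $P$ to the collinear divisor $\Al_3 W$ (collinear schemes in $W$ being coplanar with $Q$, while a general $\Gamma \subseteq W$ spanning $W$ is not). This fiberwise identity both explains why the naive guess $F = H$ fails in $\PP^2$ and confirms the global relation, giving an independent check on the intersection-number computation above.
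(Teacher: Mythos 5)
Your proposal is correct and takes essentially the same route as the paper: the paper proves Lemma \ref{lem:F=P+H} by pairing both sides of $F = H + P$ against a basis of $A_1(\Hilb_3 \PP^3)$ (the RL curves $P_3 H \ell p$ and $P_3 H \ell^2$, both contained in a fixed plane) and invoking the numerical determinacy of Proposition \ref{prop:chow-hom}. The only difference is the choice of test curves — you construct $C_1, C_2$ by hand and verify the intersection numbers and transversality geometrically (all of which check out, e.g. $P \cdot C_2 = 1$ does hold with multiplicity one), whereas the paper's choice lets it read the needed numbers off the existing $\Hilb_3 \PP^2$ and RL intersection tables.
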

\begin{proof} This follows from intersecting each side of the equation with $P_3 H \ell p, P_3 H \ell^2$, which constitute a basis of $A_1(\Hilb_3 \PP^3)$. 
\end{proof}
\begin{remark} By abuse of terminology, $H, \ell, p, \betarep, F$ are being used in two ways (for both a subvariety of $\Hilb_3 \PP^2$ and a subvariety of $\Hilb_3 \PP^3$). We do this for convenience, as it is clear that $H, \ell, p, \betarep, F$ restrict to their $\Hilb_3 \PP^2$ analogues when we consider $\Hilb_3 \PP^2 \subseteq \Hilb_3 \PP^3$ (by identifying $\PP^2 \hookrightarrow \PP^3$ as a plane in $\PP^3$), and because in context it will be clear which usage we are referring to in any given situation.
\end{remark}
\begin{lemma} The Chow groups of $\Hilb_3 \PP^3$ have the following bases, which henceforth we refer to as the $\RL$ bases.
\[
  \begin{tabular}{c|l}
  $k$ & Basis for $A_k(\Hilb_3 \PP^3)$\\
  \hline
  9 & $\Hilb_3 \PP^3$\\
  8 & $P,H$\\
  7 & $P_2, P^2, PH, H^2, \ell, p$\\
  6& $P_3,P_2 P,P_2 H, PH^2, P^2H, P\ell, Pp, H^3, H\ell, \betarep$\\
  5 & $P_3 P, P_3 H, P_2 H^2, P_2 HP, P_2 \ell, P_2^2, P_2 p$, $(P^2 H^2 + PH^3)$, $PH \ell, P\betarep, H^2 \ell, \ell^2, \ell p$ \\
  4 & $P_3 HP, P_3 H^2, P_3 \ell, P_3 P_2, P_3 p, P_2 H \ell, P_2^2 H, P_2 \betarep, P_2 H^3, P_2 P H^2, PH^2 \ell, P \ell^2, H \ell p$ \\
  3 & $P_3 H \ell, P_3 P_2 H, P_3^2, P_3 \betarep, P_3 H^3, P_3PH^2$, $(P_2^2H^2 + P_2 H^2 \ell)$, $P_2 \ell p, P_2 \ell^2, P H \ell p$ \\
  2 & $P_3 H^2 \ell, P_3 P_2 H^2, P_3 \ell^2, P_3 \ell p, P_3 P_2 \ell, P_2 H \ell p$\\
  1 & $P_3 H \ell p, P_3 H \ell^2$ \\
  0 & $P_3 \ell^3$
  \end{tabular}
\]
\begin{proof}
See Lemma 4.5 in \cite{RL90}.
\end{proof}
\end{lemma}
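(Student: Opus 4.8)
The plan is to reconstruct the basis from the two structural facts already recorded: that $\wt{\Hilb}_3\PP^3$ is a $\Hilb_3\PP^2$-bundle over $\mathbb{G}(2,3)\cong(\PP^3)^\vee$, and that $q:\wt{\Hilb}_3\PP^3\to\Hilb_3\PP^3$ is the blowup along the collinear locus $\Al_3\PP^3$, yielding the split exact sequence
\[
0 \to (\Ker q'_*)_k \xrightarrow{\ j_*\ } A_k(\wt{\Hilb}_3\PP^3) \xrightarrow{\ q_*\ } A_k(\Hilb_3\PP^3) \to 0.
\]
Thus I would first compute $A_\bullet(\wt{\Hilb}_3\PP^3)$ in full, then read off $A_k(\Hilb_3\PP^3)$ as the quotient by $j_*(\Ker q'_*)_k$, and finally match the quotient to the explicit geometric classes in the table.

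First I would compute $A_\bullet(\wt{\Hilb}_3\PP^3)$ by a Leray--Hirsch argument. Both the base $(\PP^3)^\vee$ and the fiber $\Hilb_3\PP^2$ admit affine stratifications (the fiber via the Bia{\l}ynicki--Birula decomposition used in Proposition \ref{prop:chow-hom} with $n=2$, the base by Schubert cells), so the Zariski-locally trivial total space inherits a cellular decomposition. Because the fiberwise EL classes $H,\ell,p,\betarep,\dots$ extend to global classes on $\wt{\Hilb}_3\PP^3$ and restrict to a basis on each fiber, Leray--Hirsch shows $A_\bullet(\wt{\Hilb}_3\PP^3)$ is a free module over $A_\bullet((\PP^3)^\vee)=\Q[P]/(P^4)$ with module basis the pullbacks of the EL basis of $A_\bullet(\Hilb_3\PP^2)$. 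This produces an explicit, fully computable basis and in particular pins down all of the Chow ranks of $\wt{\Hilb}_3\PP^3$.

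Next I would identify the kernel to be removed. The map $q'$ realizes the exceptional divisor $\wt{\Al}_3\PP^3$ as the projectivization of the normal bundle of $\Al_3\PP^3$, so $q'_*$ is surjective and $(\Ker q'_*)_k$ is spanned by the classes carried by the non-top powers of the relative hyperplane class along the fibers of this projective bundle. Pushing these forward by $j_*$ gives the subgroup that must be quotiented out, and $A_k(\Hilb_3\PP^3)$ is the resulting quotient. To finish, I would exhibit for each listed generator a natural lift to $\wt{\Hilb}_3\PP^3$, compute its image under $q_*$, and verify that the images are exactly the tabulated classes. Linear independence and spanning I would confirm by assembling the intersection pairing matrix of the proposed basis of $A_k$ against a candidate basis of the complementary group $A_{9-k}$ (using the perfect pairing of Proposition \ref{prop:chow-hom}) and checking that it is nonsingular; this simultaneously verifies the rank count.

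The main obstacle is the bookkeeping in the last step: translating the abstract module basis into geometrically meaningful classes and controlling the relations introduced by the quotient. The appearance of combined generators such as $(P^2H^2+PH^3)$ in $A_5$ and $(P_2^2H^2+P_2H^2\ell)$ in $A_3$ is precisely the symptom of this difficulty --- the naive monomials in $H,P,P_2,P_3,\ell,p,\betarep$ become linearly dependent after descent, and one must select the correct integral combinations that survive and remain independent. Sorting out these relations, and confirming the choice via a nonsingular intersection pairing, is the crux of the argument; the Leray--Hirsch and blowup steps are formal once the stratifications are in place.
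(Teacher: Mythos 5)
Your proposal is correct and is essentially the argument the paper relies on: the paper's proof is a citation to Lemma 4.5 of \cite{RL90}, whose method is exactly the one you describe --- compute $A_\bullet(\wt{\Hilb}_3\PP^3)$ as a free module over the base of the $\Hilb_3\PP^2$-bundle, then descend along the blowup $q$ using the split exact sequence $0 \to (\Ker q'_*)_k \to A_k(\wt{\Hilb}_3\PP^3) \to A_k(\Hilb_3\PP^3) \to 0$ recalled in Section \ref{section:backgroundP3}, and certify the resulting geometric representatives by a nonsingular intersection pairing. Your diagnosis of the combined generators such as $(P^2H^2+PH^3)$ as relics of relations introduced by the descent is also the right explanation.
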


\begin{remark}Rossell\'{o}-Llompart provides the integer values of all weight $9$ monomials in $P,H,P_3,\betarep,P_2 \ell,p$ in the tables of Proposition 4.7 of \cite{RL90}. Hence if you can write a cycle in $\Hilb_3 \PP^3$ in terms of $P,H,P_3,\betarep,P_2 \ell,p$, then you can use this table and the intersection tables below to write the cycle in terms of the $\RL$ basis. Hence in this paper we will aim to write all pertinent cycles in terms of these seven generators, but may not always give their expression in the bases of $A_k(\Hilb_3 \PP^3)$.
\end{remark}
For reference, we include some of the intersection tables for this basis in Tables \ref{table:(co)dim2-P3}, \ref{table:(co)dim3-P3}, \ref{table:(co)dim4-P3}.
\begin{table}[h]
  \stepcounter{theorem}
  \[
\begin{tabular}{c|cccccc}
 & $P_2$ & $P^2$ & $PH$ & $H^2$ & $\ell$ & $p$\\
 \hline
$P_3 H^2 \ell$ & 1 & 0 & 3 & 3 & 1 & 1\\
$P_3 P_2 H^2$ & -1 & 0 & -3 & 6 & 1 & 0\\
$P_3 \ell^2$ & 0 & -1 & 1 & 1 & 1 & 0\\
$P_3 \ell p$ & 0 & 0 & 0 & 1 & 0 & 0\\
$P_3 P_2 \ell$ & 0 & 1 & -1 & 1 & 0 & 0\\
$P_2 H \ell p$ & 0 & -3 & 4 & 9 & 2 & 2
\end{tabular}\]
\caption{The intersection matrix for $A_2(\Hilb_3 \PP^3) \times A_7(\Hilb_3 \PP^3)$}
\label{table:(co)dim2-P3}
\end{table}
\begin{table}[h]
  \stepcounter{theorem}
  \begin{center}
  \begin{tabular}{c|cccccccccc}
   & $P_3$ & $P_2 P$ & $P_2 H$ & $PH^2$ & $P^2 H$ & $P \ell$ & $P p$ & $H^3$ & $H \ell$ & $\betarep$\\
   \hline
  $P_3 H \ell$ & 0 & -1 & 1 & 3 & 0 & 1 & 0 & 3 & 1 & 0\\
  $P_3 P_2 H $ & 0 & 1 & -1 & -3 & 0 & -1 & 0 & 6 & 1 & 0\\
  $P_3^2$ & 0 & 0 & 0 & -1 & 1 & 0 & 0 & 1 & 0 & 0\\
  $P_3 \betarep$ & 0 & 0 & 0 & 1 & -1 & 0 & 0 & 3 & 0 & 1\\
  $P_3 H^3$ & 1 & -3 & 6 & 15 & 3 & 3 & 3 & 15 & 3& 3\\
  $P_3 P H^2$ & -1 & 0 & -3 & 3 & -12 & 0 & -3 & 15 & 3 & 1\\
  $(P_2^2 H^2 + P_2 H^2 \ell)$ & 0 & -2 & 2 & 21& -6&4&1&65&16&4\\
  $P_2 \ell p$ & 0 & 0 & 0 & 4 & -3 & 0 & 0 & 9 & 2 & 1\\
  $P_2 \ell^2$ & 0 & -2 & 2 & 7 & 1 & 3 & 0 & 7 & 3 & 0\\
  $P H \ell p$ & 0 & -3 & 4 & 18& 4 & 5 & 2 & 25 & 7 & 2
  \end{tabular}
  \end{center}
  \caption{The intersection matrix for $A_3(\Hilb_3 \PP^3) \times A_6(\Hilb_3 \PP^3)$}
  \label{table:(co)dim3-P3}
  \end{table}
\begin{table}[h]\hspace{-.625cm}
  \stepcounter{theorem}
  \setlength{\tabcolsep}{5pt}
  \begin{tabular}{c|ccccccccccccc}
    &$P_3P$ & $P_3 H$ & $P_2 H^2$ & $P_2 HP$ & $P_2 \ell$ & $P_2^2$ & $P_2 p$ & $(P^2 H^2 + PH^3)$ & $PH \ell$ & $P \betarep$ & $H^2 \ell$ & $\ell^2$ & $\ell p$\\
    \hline
  $P_3 HP$ &1&-1&-3&0&-1&1&0&-9&0&-1&3&1&0\\
  $P_3 H^2$ &-1&1&6&-3&1&-1&0&18&3&1&3&1&1\\
  $P_3 \ell$ &0&0&1&-1&0&0&0&3&1&0&1&1&0\\
  $P_3 P_2$ &0&0&-1&1&0&0&0&-3&-1&0&1&0&0\\
  $P_3 p$ &0&0&0&0&0&0&0&0&0&0&1&0&0\\
  $P_2 H \ell$ &-1&1&9&-3&2&-2&0&33&7&1&7&3&2\\
  $P_2^2 H$ &1&-1&-7&1&-2&2&0&-18&-3&-1&9&2&0\\
  $P_2 \betarep$ &0&0&1&-1&0&0&0&7&1&1&3&0&1\\
  $P_2 H^3$ &-3&6&40&-4&9&-7&3&150&25&9&25&7&9\\
  $P_2 P H^2$ &0&-3&-4&-14&-3&1&-3&6&8&-2&25&7&4\\
  $PH^2 \ell$ &0&3&25&8&7&-3&4&116&20&6&20&6&7\\
  $P \ell^2$&-1&1&7&1&3&-2&0&32&6&1&6&0&3\\
  $H \ell p$&0&1&9&4&2&0&2&43&7&2&7&3&2\\
  \end{tabular}
  \caption{The intersection matrix for $A_4(\Hilb_3 \PP^3) \times A_5(\Hilb_3 \PP^3)$}
  \label{table:(co)dim4-P3}
\end{table}

\subsection{Lifts and pushforwards of the MS basis and extending in (co)dimension 2} \label{Subsec:lifts-MS} In this subsection and the next, we produce an analogue of the MS basis for $A_i(\Hilb_3 \PP^3)$ for $i=2,3,6,7$. The motivation for this comes from a few factors, the first of which is that the MS basis for $\Hilb_3 \PP^2$ had properties that made it particularly well-suited to computing the nef and effective cones. The generators $A,B,C,D,E$ of $A_4(\Hilb_3 \PP^2)$ and $U,V,W,X,Y,Z$ of $A_3(\Hilb_3 \PP^2)$ intersect the PGL-orbits in at most one dimension more than expected, and have the property that their duals in $A_2(\Hilb_3 \PP^2), A_3(\Hilb_3 \PP^2)$, are effective. That is, if one follows the strategy outlined in Section \ref{subsec:techniques-for-eff-nef}, you get a candidate nef cone whose rays are non-negative coefficient sums of MS basis elements, which means we can apply Lemma \ref{lemma:kleiman-analog}.

Additionally, extending the MS basis helps to better compare our cones with the $\Hilb_3 \PP^2$ results and allows us to utilize some of the computations from the $\PP^2$ case, as we will see in Sections \ref{section:orbits} and Appendix \ref{appendix:computations}. Furthermore, this leads to cleaner expressions for the final cones than in the basis of Rossell\'{o}-Llompart. Since the dimension of $A_2(\Hilb_3 \PP^k)$ stabilizes for $k\ge 4$, one hopes that eventually we can find a clean expression for $\Eff_2(\Hilb_3 \PP^k), \Nef^2(\Hilb_3 \PP^k)$ for all $k$.

Our first goal is to identify subspaces of the Chow groups of $\Hilb_3 \PP^3$ that "act" like a copy of the MS basis in $\Hilb_3 \PP^2$. That is, we want to identify the intersection matrix of $A_k(\Hilb_3 \PP^2), A^k(\Hilb_3 \PP^2)$ as a submatrix of $A_k(\Hilb_3 \PP^3), A^k(\Hilb_3 \PP^3$), and we would also like the elements of $A(\Hilb_3 \PP^3)$ yielding this submatrix to be defined based on MS basis elements.

Embed $\PP^2 \stackrel{i}{\hookrightarrow} \PP^3$ by identifying it with a distinguished plane $\Pi$ in $\PP^3$. Then certainly we have
\begin{align*}
  i: A_{\bullet}(\Hilb_3 \PP^2) &\to A_{\bullet}(\Hilb_3 \PP^3)\\
  [X] &\mapsto [X]
\end{align*}
where on the right, $X$ is viewed as a subvariety of $\Hilb_3 \PP^3$. For a class in the MS basis, we will generally use previous notation for classes (like $\alpha$ or $U$, and so on) to denote both the class in $\Hilb_3 \PP^2$ and in $\Hilb_3 \PP^3$, instead of writing $i(\alpha)$ or $i(U)$. It will be clear from context which usage we are referring to.

For a map $A^{\bullet}(\Hilb_3 \PP^2) \to A^{\bullet}(\Hilb_3\PP^3)$, we define the notion of a lift of an MS basis element. For each basis element $\mathcal{B}$ of $A(\Hilb_3 \PP^2)$, we want an element $\wt{\mathcal{B}}$ such that $\wt{\mathcal{B}} \cdot P_3  = \mathcal{B}$. We accomplish this by changing all the conditions to their higher-dimensional analogues: the condition of being incident to a line in $\PP^2$ becomes the condition of being incident to a plane in $\PP^3$; the condition of containing a fixed point becomes being incident to a fixed line in $\PP^3$; the condition of being collinear with a point in $\PP^2$ becomes being coplanar with a line in $\PP^3$. 

For example, $\wt{D}$ is the locus of schemes containing a length two subscheme contained in a fixed plane $\Pi$, and $\wt{E}$ is the locus of schemes incident to a line. $\wt{B}$ is the locus whose general member is two points coplanar with a fixed line, and a third point on a fixed plane. Any lifts preserve the ``nesting'' of incidence conditions that we see with classes like $\alpha$: for example, $\wt{\alpha}$ is the locus of schemes contained in a fixed plane, with one point on a fixed line in that fixed plane.

See Figures \ref{bigfig1}, \ref{bigfig2}, \ref{bigfig3}, and \ref{bigfig4} for illustrations of a generic member of each lift.

\begin{figure}[h]
  \stepcounter{theorem}
  \begin{center}
  \begin{tikzpicture}[scale=.70]
    \filldraw[fill=white](0,1)--(1,2)--(3,1)--(2,0)--(0,1);
  
    \draw (1,1.25) circle (.13);
    \draw (1,0) circle (.13);
    \draw (1.5,-.25) circle (.13);
    \node at (1.5,-1) {$\wt{H}$};
  \end{tikzpicture} \quad \quad \quad \quad
  \begin{tikzpicture}[scale=.70]
    \filldraw[fill=white,dashed](0,1)--(1,2)--(3,1)--(2,0)--(0,1);
    \draw[thick] (1,2)--(3,1);
  
  \draw (1,1.25) circle (.13);
  \draw (1.5,1) circle (.13);
  \draw (1,0) circle (.13);
  \node at (1.5,-1) {$\wt{F}$};
  \end{tikzpicture}
  \quad \quad \quad \quad
  \begin{tikzpicture}[scale=.70]
    \filldraw[fill=white](0,1)--(1,2)--(3,1)--(2,0)--(0,1);
    \draw (1,1.25) circle (.13);
    
    \draw (-.5,1.75)--(-.5,-.5);
    \draw (-1,1.75)--(-1,-.5);
    \draw (-.5,1) circle (.13);
    \draw (-1,.5) circle (.13);
    \node at (1,-1) {$\wt{\phi}$};
  \end{tikzpicture}\quad \quad \quad \quad
  \begin{tikzpicture}[scale=.70]
    \filldraw[fill=white](0,1)--(1,2)--(3,1)--(2,0)--(0,1);
    \draw (1,1.25) circle (.13);
    \draw (1.5,1) circle (.13);
    \draw (.5,.75)--(1.5,1.75);
  
    \draw (-.5,1.75)--(-.5,-.5);
    \draw (-.5,1) circle (.13);
    \node at (1.25,-1) {$\wt{\psi}$};
  \end{tikzpicture}\end{center}
  \caption{A picture of a general member of each of the lifts of the MS basis elements of codimension $1$. Solid lines denote fixed lines/planes, and dashed lines denote planes that can vary. In the picture for $\wt{F}$, the locus of schemes with a length two subscheme coplanar with a line $L$, the solid line in the dashed plane denotes that the plane containing the two points can vary, but that the plane must always contain the solid line.}
  \label{bigfig1}
  \end{figure}
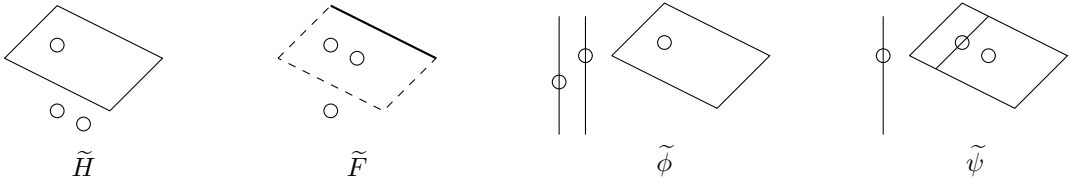
  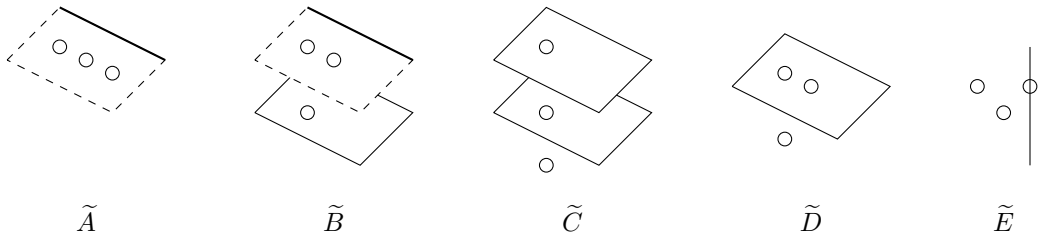
\begin{figure}[h]
    \stepcounter{theorem}
    \begin{center}
    \begin{tikzpicture}[scale=.70]
      \filldraw[fill=white,dashed](0,1)--(1,2)--(3,1)--(2,0)--(0,1);
      \draw[thick] (1,2)--(3,1);

      \draw (1,1.25) circle (.13);
      \draw (1.5,1) circle (.13);
      \draw (2,.75) circle (.13);
      
      \node at (1.5,-2) {$\wt{A}$};
      \end{tikzpicture} \quad \quad \quad
      \begin{tikzpicture}[scale=.70]
        \filldraw[fill=white](0,0)--(1,1)--(3,0)--(2,-1)--(0,0);
        \filldraw[fill=white,dashed](0,1)--(1,2)--(3,1)--(2,0)--(0,1);
        \draw[thick] (1,2)--(3,1);
        \draw (1,1.25) circle (.13);
        \draw (1.5,1) circle (.13);
        \draw (1,0) circle (.13);
  
        \node at (1.5,-2) {$\wt{B}$};
      \end{tikzpicture}\quad \quad \quad  
      \begin{tikzpicture}[scale=.70]
          \filldraw[fill=white](0,0)--(1,1)--(3,0)--(2,-1)--(0,0);
          \filldraw[fill=white](0,1)--(1,2)--(3,1)--(2,0)--(0,1);
          \draw (1,1.25) circle (.13);
          \draw (1,0) circle (.13);
          \draw (1,-1) circle (.13);
          \node at (1.5,-2) {$\wt{C}$};
        \end{tikzpicture}\quad\quad\quad
        \begin{tikzpicture}[scale=.70]
          \filldraw[fill=white](0,1)--(1,2)--(3,1)--(2,0)--(0,1);
          \draw (1,1.25) circle (.13);
          \draw (1.5,1) circle (.13);
          \draw (1,0) circle (.13);
          \node at (1.5,-1.5) {$\wt{D}$};
        \end{tikzpicture}\quad\quad\quad
        \begin{tikzpicture}[scale=.70]
          \draw (-.5,1.75)--(-.5,-.5);

          \draw (-.5,1) circle (.13);
          \draw (-1,.5) circle (.13);
          \draw (-1.5,1) circle (.13); 
          \node at (-1,-1.5) {$\wt{E}$};
        \end{tikzpicture}

    \caption{A picture of a general member of each of the lifts of the MS basis elements of codimension $2$.}
    \label{bigfig2}\end{center}
  \end{figure}
  \begin{figure}[h]
    \stepcounter{theorem}
    \begin{center}
    \begin{tikzpicture}[scale=.70]
      \filldraw[fill=white](0,1)--(1,2)--(3,1)--(2,0)--(0,1);
      \draw (1,1.25) circle (.13);
      \draw (1.5,1) circle (.13);
      \draw (2,.75) circle (.13);
      \draw (.5,.75)--(1.5,1.75);
      \node at (1.5,-2) {$\wt{\alpha}$};
    \end{tikzpicture}\quad\quad\quad 
    \begin{tikzpicture}[scale=.70]
      \filldraw[fill=white](0,0)--(1,1)--(3,0)--(2,-1)--(0,0);
      \filldraw[fill=white](0,1)--(1,2)--(3,1)--(2,0)--(0,1);
      \draw (1,1.25) circle (.13);
      \draw (1.5,1) circle (.13);
      \draw (1,0) circle (.13);
      \draw (.5,.75)--(1.5,1.75);
      \node at (1.5,-2) {$\wt{\beta}$};
    \end{tikzpicture}\quad\quad\quad
    \begin{tikzpicture}[scale=.70]
      \filldraw[fill=white](0,0)--(1,1)--(3,0)--(2,-1)--(0,0);
      \filldraw[fill=white](0,1)--(1,2)--(3,1)--(2,0)--(0,1);
      \draw (1,1.25) circle (.13);
      \draw (1,0) circle (.13);
      
      \draw (-.5,1.75)--(-.5,-.5);
      \draw (-.5,1) circle (.13);
      
      \node at (1,-2) {$\wt{\gamma}$};
    \end{tikzpicture}\quad\quad\quad
    \begin{tikzpicture}[scale=.70]
        \filldraw[fill=white](0,1)--(1,2)--(3,1)--(2,0)--(0,1);
        \draw (1,1.25) circle (.13);
        \draw (1.5,1) circle (.13);
        \draw (-.5,1.75)--(-.5,-.5);
        \draw (-.5,1) circle (.13);
        
        \node at (1.25,-1.5) {$\wt{\delta}$};
      \end{tikzpicture} \quad \quad\quad
      \begin{tikzpicture}[scale=.70]
        \draw (-.5,1.75)--(-.5,-.5);
        \draw (-1,1.75)--(-1,-.5);
        \draw (-.5,1) circle (.13);
        \draw (-1,.5) circle (.13);
        \draw (-1.5,1) circle (.13); 
        
        \node at (-1,-1.5) {$\wt{\epsilon}$};
      \end{tikzpicture}
      \caption{A picture of a general member of each of the lifts of the MS basis of codimension $4$}
      \label{bigfig3}
    \end{center}
  \end{figure}
\begin{figure}[h]
  \stepcounter{theorem}
  \begin{center}
  \begin{tikzpicture}[scale=.70]
    \filldraw[fill=white](0,1)--(1,2)--(3,1)--(2,0)--(0,1);
    \draw (1,1.25) circle (.13);
    \draw (1,0) circle (.13);
    \draw (-.5,1.75)--(-.5,-.5);
    \draw (-.5,1) circle (.13);
    
    \node at (1,-2) {$\wt{U}$};
    \end{tikzpicture}\quad\quad\quad
    \begin{tikzpicture}[scale=.70]
     \filldraw[fill=white,dashed](0,1)--(1,2)--(3,1)--(2,0)--(0,1);
      \draw[thick] (1,2)--(3,1);
      \draw (1,1.25) circle (.13);
      \draw (1.5,1) circle (.13);
      \draw (-.5,1.75)--(-.5,-.5);

      \draw (-.5,1) circle (.13);

      \node at (1,-2) {$\wt{V}$};
      \end{tikzpicture}\quad\quad\quad
    \begin{tikzpicture}[scale=.70]
      \filldraw[fill=white](0,-1)--(1,0)--(3,-1)--(2,-2)--(0,-1);
      \filldraw[fill=white](0,0)--(1,1)--(3,0)--(2,-1)--(0,0);
      \filldraw[fill=white](0,1)--(1,2)--(3,1)--(2,0)--(0,1);
      \draw (1,1.25) circle (.13);
      \draw (1,0) circle (.13);
      \draw (1,-1) circle (.13);
      \node at (1.5,-2.5) {$\wt{W}$};
    \end{tikzpicture}\[\]
    \begin{tikzpicture}[scale=.70]
      \filldraw[fill=white](0,0)--(1,1)--(3,0)--(2,-1)--(0,0);
      \filldraw[fill=white](0,1)--(1,2)--(3,1)--(2,0)--(0,1);
      \draw (1,1.25) circle (.13);
      \draw (1.5,1) circle (.13);
      \draw (1,0) circle (.13);

      \node at (1.5,-2) {$\wt{X}$};
    \end{tikzpicture}\quad\quad\quad
    \begin{tikzpicture}[scale=.70]
      \filldraw[fill=white](0,1)--(1,2)--(3,1)--(2,0)--(0,1);
      \draw (1,1.25) circle (.13);
      \draw (1.5,1) circle (.13);
      \draw (2,.75) circle (.13);

      \node at (1.5,-2) {$\wt{Y}$};
    \end{tikzpicture}\quad\quad\quad
    \begin{tikzpicture}[scale=.70]
      \filldraw[fill=white](0,1)--(1,2)--(3,1)--(2,0)--(0,1);
      \draw (1,1.25) circle (.13);
      \draw (1.5,1) circle (.13);
      \draw (1,0) circle (.13);
      \draw (.5,.75)--(1.5,1.75);

      \node at (1.5,-2) {$\wt{Z}$};
    \end{tikzpicture}\end{center}
  \caption{The lifts of the MS basis elements of codimension $3$}
  \label{bigfig4}
\end{figure}
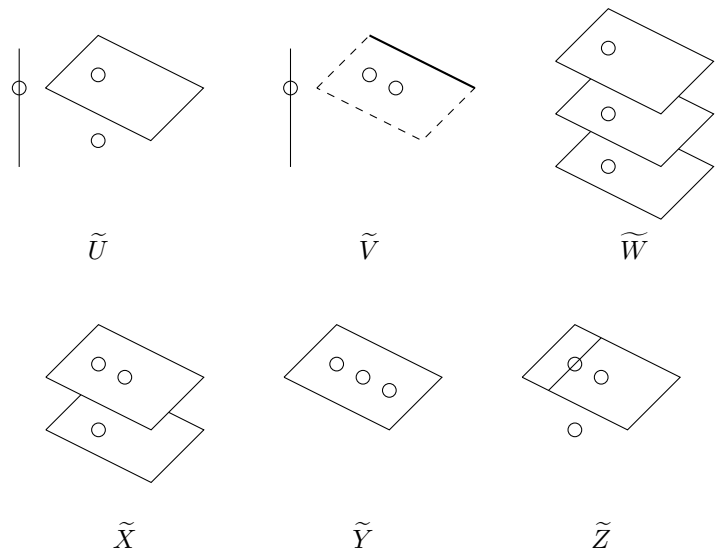
For figuring out the rest of the generators of a potential MS basis in the $\Hilb_3 \PP^3$ case, there is a natural way to extend the basis for cycles of codimensions 2 and 3: we use collinearity conditions. Extending the basis for cycles of dimension 2 and 3 is trickier, so we break from the usual look of the MS basis and add in the duals of the new generators, since we would need to compute them regardless. This does introduce basis elements parametrizing schemes that are generally not reduced, but note that most of these introduced classes are fairly well-behaved when it comes to computations.

We are now able to define the \textit{extended MS basis} (or just ''MS basis'') in (co)dimension 2 in $\Hilb_3 \PP^3$. Consider the basis for $A_7(\Hilb_3 \PP^3)$ given by $\wt{A}, \wt{B}, \wt{C},\wt{D},\wt{E}, M$, where $M$ is the locus of schemes containing a length two subscheme collinear with a fixed point $Q$. Additionally, consider the basis for $A_2(\Hilb_3 \PP^3)$ given by $\alpha,\beta,\gamma,\delta,\epsilon, \mu$, where $\mu$ is the locus parametrizing schemes given by the union of a fixed point $Q$ and a length two scheme supported at a second fixed point $R$. Then the intersection matrix is as follows.
\begin{center}
  \begin{tabular}{ c|cccccc } 
   & $\alpha$ & $\beta$ & $\gamma$ & $\delta$ &$\epsilon$ & $\mu$ \\ 
  \hline
 $\wt{A}$ & 0 & 0 & 1 & 0 & 0 & 0\\
 $\wt{B}$ & 0 & 1 & 2 & 1 & 0 & 0\\
 $\wt{C}$ & 1 & 2 & 2 & 1 & 0 & 0\\
 $\wt{D}$ & 0 & 1 & 1 & 0 & 0& 0\\
 $\wt{E}$ & 0 & 0 & 0 & 0 & 1& 0\\
 $M$ & 0 & 0 & 0 & 0 & 0 & 1
 \end{tabular}
\end{center}
The upper-left $5 \times 5$ comes from the push-pull formula reducing the intersections to a computation in $\Hilb_3 \PP^2$. For the intersections with $M,\mu$: the zeros are clear. To calculate that $M \cdot \mu = 1$, we check transversality in coordinates. Note that this is equivalent to checking the intersection number between $M'$, the locus of 2 points collinear with a fixed point $Q$, and $\mu'$, the locus of nonreduced points supported at a point $Q'$, in $\Hilb_2 \PP^3$.

Pick a copy of $\A^3$ in $\PP^3$ and give it coordinates $x,y,z$. Then we have coordinates $\beta,\gamma,b_1,b_2,c_1,c_2$ on an open subset of $\Hilb_2(\A^3)$ by writing the ideal of a general length 2 subscheme as
\begin{align*}
(x^2+\beta x + \gamma,y-b_1x-b_2,z-c_1x-c_2)
\end{align*}
Set $Q = (0,0,0)$ and $Q' = (1,0,0)$. Then the equations cutting out $M'$ are
\begin{equation*}
b_2 = 0, c_2 = 0,
\end{equation*}
and the equations defining $\mu'$ are
\begin{align*}
b_1 = -b_2, \ c_1 = -c_2, \ \beta = -2, \gamma = 1.
\end{align*}
Since $\dim_{\C} \C[\beta,\gamma,b_1,b_2,c_1,c_2]/(b_2,c_2,b_1+b_2,c_1+c_2, \beta+2,\gamma-1) = 1$, we see that $M \cdot \mu = M' \cdot \mu' = 1$. Note that we are taking the reduced structure here.



\begin{proposition}\label{prop:P3-conversion}
We have the following change of bases between the RL basis and the extended MS basis in $A_2(\Hilb_3 \PP^3)$.
\begin{align*}
P_3 H^2 \ell &= \gamma + \epsilon\\
P_3 H^2 P_2 &= 3\alpha + \beta - \gamma + 2\delta + \epsilon \\
P_3 \ell^2 &= \epsilon\\
P_3 \ell p &= \delta \\
P_3 \ell P_2 &= \alpha \\
P_2 H \ell p &=  2 \beta + 3 \delta + 2 \epsilon + \mu
\end{align*}
Correspondingly, we have the following conversion between the RL and MS bases in $A_7(\Hilb_3 \PP^3)$.
\begin{align*}
P_2 &= \wt{A}\\
P^2 &= 3\wt{A} - \wt{B} + \wt{C} -2 \wt{D} - \wt{E} + M\\
PH &= \wt{B} -\wt{C} + 2\wt{D} + \wt{E} \\
H^2 &= \wt{C} + \wt{E}\\
\ell &= \wt{E}\\
p &= \wt{D}
\end{align*}
\end{proposition}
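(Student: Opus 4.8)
The plan is to exploit the fact that, by Proposition~\ref{prop:chow-hom}, the intersection pairing $A_2(\Hilb_3\PP^3)\times A_7(\Hilb_3\PP^3)\to\Q$ is perfect, so the two stated changes of basis are dual to one another. I would first establish the $A_2$ identities and then obtain the $A_7$ identities by a duality/bookkeeping argument, exactly mirroring how the codimension-$2$ conversion was deduced from the dimension-$2$ conversion in Proposition~\ref{prop:P2-conversion}.

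First I would reduce the five $A_2$ formulas containing $P_3$ to the planar case. Writing $i:\Hilb_3\PP^2\hookrightarrow\Hilb_3\PP^3$ for the inclusion as $\Hilb_3\Pi$ for a fixed plane $\Pi$, we have $P_3=i_*[\Hilb_3\Pi]$, and the projection formula gives $P_3\cdot X=i_*\,i^*X$ for the regular embedding $i$. A generic-position (transversality) check identifies the restrictions $i^*H=H$, $i^*\ell=\ell$, $i^*P_2=g$, and $i^*p=p$, where the right-hand classes are the $\Hilb_3\PP^2$ generators of Proposition~\ref{prop:P2-conversion}: for instance a generic plane meets $\Pi$ in a line, so incidence to a plane restricts to incidence to a line, and a generic line meets $\Pi$ in a point, so incidence to a line restricts to passing through a point. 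Since $i^*$ is a ring homomorphism, each product $P_3H^2\ell$, $P_3H^2P_2$, $P_3\ell^2$, $P_3\ell p$, $P_3\ell P_2$ is the pushforward of the corresponding $\Hilb_3\PP^2$ product $H^2\ell$, $H^2g$, $\ell^2$, $\ell p$, $\ell g$, and the five identities read off directly from Proposition~\ref{prop:P2-conversion}. In particular all five lie in $i_*A_2(\Hilb_3\PP^2)=\langle\alpha,\beta,\gamma,\delta,\epsilon\rangle$ and have zero $\mu$-component.

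The one genuinely new computation, and the main obstacle, is the sixth $A_2$ identity $P_2H\ell p=2\beta+3\delta+2\epsilon+\mu$, which is the only basis element with nonzero $\mu$-component and so cannot be obtained by restriction to $\Pi$. I would compute its coordinates by pairing against the MS basis $\wt A,\dots,\wt E,M$ of $A_7$ and inverting the displayed $6\times6$ intersection matrix. Because that matrix is block diagonal, namely the planar block of Table~\ref{table:co-dim2P3} together with the single entry $M\cdot\mu=1$ and the vanishing $M\cdot\alpha=\dots=M\cdot\epsilon=0$, the $\mu$-coefficient is exactly the number $P_2H\ell p\cdot M$, while the coefficients of $\alpha,\dots,\epsilon$ are recovered from the pairings with $\wt A,\dots,\wt E$ through the invertible planar block. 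These pairings I would evaluate in explicit affine coordinates on the Hilbert scheme, in the same spirit as the transversality computation of $M\cdot\mu$ preceding the statement; the delicate point is the intersection with $M$, where one must take the reduced structure and check that the excess collinear/nonreduced locus does not contribute, which is what produces the coefficient $1$ on $\mu$.

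Finally I would deduce the $A_7$ identities. Three of them are equalities of loci by matching the geometric description of the lift with that of the $\RL$ generator: $P_2=\wt A$ (coplanar with a fixed line), $\ell=\wt E$ (incident to a fixed line), and $p=\wt D$ (a length-two subscheme in a fixed plane). For the remaining three, $H^2$, $PH$, $P^2$, I would argue by duality. Let $S$ be the (now known, invertible) matrix expressing the $\RL$ basis of $A_2$ in the MS basis, let $G$ be the MS pairing matrix, and let $N$ be the pairing of the two $\RL$ bases recorded in Table~\ref{table:(co)dim2-P3}. If $T$ denotes the sought matrix expressing the $\RL$ basis of $A_7$ in the MS basis, compatibility of the pairings forces $N=SGT^{\top}$; since $S$ and $G$ are invertible (the latter because $\det G$ equals the determinant of the nondegenerate planar block), this determines $T^{\top}=(SG)^{-1}N$ uniquely. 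One then reads off the columns and checks they agree with the stated expressions for $H^2$, $PH$, $P^2$, with the three definitional identities $P_2=\wt A$, $\ell=\wt E$, $p=\wt D$ serving as an independent geometric cross-check. The only real risk in this last step is sign and index bookkeeping across the three bases, since all the geometry has already been absorbed into the preceding steps.
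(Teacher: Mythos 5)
Your overall architecture matches the paper's: the five identities involving $P_3$ are obtained exactly as in the paper, by pushing forward the formulas of Proposition \ref{prop:P2-conversion} along the inclusion $\Hilb_3\Pi\hookrightarrow\Hilb_3\PP^3$, and the $A_7$ conversions are deduced at the end by duality of the intersection pairing, which is also how the paper concludes (your formula $N=SGT^{\top}$ is correct up to a transpose convention on $G$, as you anticipate). The genuine divergence is in the sixth identity, $P_2H\ell p=2\beta+3\delta+2\epsilon+\mu$, and there your route is both different from the paper's and substantially heavier --- and it is the one place where your proposal leaves the essential work undone.

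The paper does not compute the MS-coordinates of $P_2H\ell p$ by pairing it against $\wt{A},\dots,\wt{E},M$. It goes the other way: it computes the pairings of $\mu$ against the RL basis $P_2,P^2,PH,H^2,\ell,p$ of $A_7$. That computation is nearly free: five of the six numbers vanish because a general representative of each RL class is disjoint from $\mu$ (a two-dimensional family of schemes supported at the two fixed points $Q,R$), and the single nonzero number $\mu\cdot P^2=1$ falls out of the decomposition $P^2=P_2+\Al_3\PP^3$ of Equation \eqref{P^2}. Table \ref{table:(co)dim2-P3} then expresses $\mu$ in the RL basis of $A_2$, and substituting the already-proved first five identities and solving for $P_2H\ell p$ finishes the proof. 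Your route instead requires the six numbers $P_2H\ell p\cdot\wt{A},\dots,P_2H\ell p\cdot M$ (which come out to $0,5,7,2,2,1$); each nonzero one is a five-fold intersection of cycles, e.g.\ $P_2\cdot H\cdot\ell\cdot p\cdot\wt{B}$, and ``evaluate in explicit affine coordinates'' is not a proof of any of them. Moreover, in a non-homogeneous space you cannot simply invoke general position: Kleiman transversality is unavailable in its usual form (this is precisely why the paper proves Lemma \ref{lemma:kleiman-analog}), so for each such product you would need to verify by hand that general translates meet properly, transversally, and only in configurations you have enumerated --- exactly the ``large case work'' that the closing remark of Appendix \ref{appendix:Grassmannian} warns is what this style of computation turns into. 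So your linear-algebra framework is sound, but the proof as proposed has a hole exactly where the new content lies; the fix is either to carry out those enumerative counts in full, or, better, to adopt the paper's pivot of pairing $\mu$ (rather than $P_2H\ell p$) against the RL basis, where disjointness and Equation \eqref{P^2} do all the work.
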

\begin{proof}
For $A_2(\Hilb_3 \PP^3)$, the first five equations follow from pushing forward the formulas in Proposition \ref{prop:P2-conversion}. For the last equation, note that:
\begin{align*}
    \begin{tabular}{ c|cccccc } 
    & $P_2$ & $P^2$ & $PH$ & $H^2$ & $\ell$ & $p$ \\
    \hline
    $\mu$ & 0 & 1 & 0 & 0 & 0 & 0
   \end{tabular}
\end{align*}
The zeroes in this table can be seen from the cycles being disjoint, and $\mu \cdot P^2 = 1$ follows from $P^2 = P_2 + 
\Al_3\PP^3$. See Equation \eqref{P^2}. Then Table \ref{table:(co)dim2-P3} implies
\begin{align*}
\mu = -2P_3H^2 \ell -2 P_3 H^2 P_2 + 2P_3\ell^2 + P_3\ell p +  6P_3 \ell P_2 + P_2 H \ell p,
\end{align*}
and therefore
\begin{align*}
P_2 H \ell p = 2 \beta + 3 \delta + 2 \epsilon + \mu.
\end{align*}
The $A_7(\Hilb_3 \PP^3)$ conversions follow.
\end{proof}
\subsection{Extending the MS basis in (co)dimension 3.}\label{subsec:extMS-codim3}
We now proceed with giving an extended version of the MS basis for $A_3(\Hilb_3 \PP^3)$ and $A_6(\Hilb_3 \PP^3)$. Note that $\dim A_3(\Hilb_3 \PP^2) = 6$ and $\dim A_3(\Hilb_3 \PP^3) = \dim A_6(\Hilb_3 \PP^3) = 10$, so we need to find four additional basis elements in degree $3$ and $6$.

Let $Q \subseteq L \subseteq \Pi$ be a point, line, and plane, respectively. Let $R$ additionally be a point not on $\Pi$. For the additional $A_6(\Hilb_3 \PP^3)$ basis elements: we define $N_1$ to be the locus of schemes whose generic member is the union of a point on $\Pi$ and a length two scheme collinear with $R$. Let $N_2$ be the locus of schemes containing a length two subscheme contained in $\Pi$ and collinear with $Q$. Let $N_3$ be the locus of schemes containing $Q$. Lastly, let $N_4$ be the locus of schemes coplanar with $L$, and containing a length two subscheme collinear with $Q$.

Then for the $A_3(\Hilb_3 \PP^3)$ basis elements: let $\nu_1$ be the locus of schemes whose generic member is a nonreduced scheme of length $2$ supported at $Q$ and a point on $L$. Let $\nu_2$ be the locus of schemes whose generic member is the union of a nonreduced scheme supported on $L$ with the point $Q$. Let $\nu_3$ be the locus of schemes containing $Q$ and $R$. Lastly, let $\nu_4$ be the locus of schemes that are totally nonreduced, supported at $Q$, and have a length two subscheme contained in $\Pi$. See Figure \ref{fig:co-dim3P3pics} for pictures of a general member of each locus.

\begin{remark}
For $\nu_1,\nu_2,\nu_3,N_1,N_2,N_3$, we take the usual reduced scheme structure. For $\nu_4$, we specifically take the scheme structure gained from writing it as the intersection $\nu_4 = F \cap N_3 \cap \overline{\OO_{2,\nr}}$, where $\overline{\OO_{2,\nr}}$ is the orbit closure parametrizing schemes supported at a single point. The class of $\overline{\OO_{2,\nr}}$ is computed in Section \ref{subsec:O2nr}. This structure on $\nu_4$ is chosen for ease of computation.

For $N_4$, we take the scheme structure gained from writing it as $P \cap M$. It will turn out that this aligns with the reduced structure.
\end{remark}
\begin{figure}[h]
  \stepcounter{theorem}
  \begin{center}
\begin{tikzpicture}[scale=.70]
  \filldraw[fill=white](0,1)--(1,2)--(3,1)--(2,0)--(0,1);
  \draw (1.5,1) circle (.13);

  \node at (0,.5) {$\times$};
  \draw[dashed] (0,.5)--(2,-.5);
  \draw (1,0) circle (.13);
  \draw (1.5,-.25) circle (.13);  
  \node at (1.5,-1) {$N_1$};
\end{tikzpicture}\quad\quad\quad
\begin{tikzpicture}[scale=.70]
  \filldraw[fill=white](0,1)--(1,2)--(3,1)--(2,0)--(0,1);

  \node at (1,1.25) {$\times$};
  \draw[dashed] (1,1.25)--(2.5,.5);
  \draw (1.5,1) circle (.13);
  \draw (2,.75) circle (.13);
  
  \draw (1,0) circle (.13);

  \node at (1.5,-1) {$N_2$};
  \end{tikzpicture}\quad\quad\quad
  \begin{tikzpicture}[scale=.70]
    \draw (-.5,1) circle (.13);
    \draw (-1,.5) circle (.13);
    \filldraw (-1.5,1) circle (.13); 
    \node at (-1,-1) {$N_3$};
  \end{tikzpicture}\quad\quad\quad
  \begin{tikzpicture}[scale=.70]
    \filldraw[fill=white,dashed](0,1)--(1,2)--(3,1)--(2,0)--(0,1);
    \draw[thick] (1,2)--(3,1);
    \node at (1.5,1.75) {$\times$};
    \draw (1.125,1.375) circle (.13);
    \draw (.75,1) circle (.13);
    \draw (2,.75) circle (.13);
    \draw[dashed] (.5,.75)--(1.5,1.75);
    \node at (1.5,-1) {$N_4$};
  \end{tikzpicture}
\vspace{1.5em}

  \begin{tikzpicture}[scale=.70]
    \draw (0,1.75)--(0,-.5);
    \filldraw (0,1) circle (.13);
    \draw (0,0) circle (.13);
    \draw[dashed,-stealth](0,1)--(.5,1.5);

    \node at (0,-1) {$\nu_1$};
  \end{tikzpicture}\quad\quad\quad\quad
  \begin{tikzpicture}[scale=.70]
    \draw (0,1.75)--(0,-.5);
    \filldraw (0,1) circle (.13);
    \draw (0,0) circle (.13);
    \draw[dashed,-stealth](.125,.125)--(.5,.5);
    
    \node at (0,-1) {$\nu_2$};
  \end{tikzpicture}\quad\quad\quad\quad
  \begin{tikzpicture}[scale=.70]
    \draw (-.5,1) circle (.13);
    \filldraw (-1,.5) circle (.13);
    \filldraw (-1.5,1) circle (.13); 
    \node at (-1,-1) {$\nu_3$};
  \end{tikzpicture}\quad\quad\quad\quad
  \begin{tikzpicture}[scale=.70]
    \filldraw[fill=white](0,1)--(1,2)--(3,1)--(2,0)--(0,1);

    \filldraw (1,1.25) circle (.13);

    \draw [domain=0:.75,dashed,-stealth] plot (\x+1,{(\x)^2+1.25});
    \draw[dashed,-stealth](1,1.25)--(1.5,1);

    \node at (1.5,-.5) {$\nu_4$};
    \end{tikzpicture}\end{center}
\caption{Pictures of a general member of $N_1,N_2,N_3,N_4$ and $\nu_1,\nu_2,\nu_3,\nu_4$.}
\label{fig:co-dim3P3pics}
\end{figure}
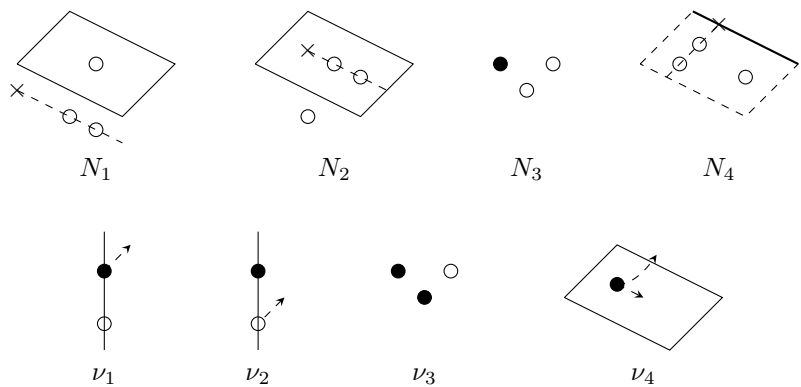
\begin{proposition}\label{prop:(co)dim3-MS-basis-ints} We have that 
  \[\{\wt{U},\wt{V},\wt{W},\wt{X},\wt{Y},\wt{Z}, N_1,N_2,N_3,N_4\}\]
forms a basis of $A_6(\Hilb_3 \PP_3)$ and 
  \[\{U,V,W,X,Y,Z,\nu_1,\nu_2,\nu_3,\nu_4\}\] 
forms a basis of $A_3(\Hilb_3 \PP^3)$. Furthermore, they have the following intersection matrix:
\[
\begin{tabular}{c|cccccc|cccc}
 & $\wt{U}$ & $\wt{V}$ & $\wt{W}$ & $\wt{X}$ & $\wt{Y}$ & $\wt{Z}$ & $N_1$ & $N_2$ & $N_3$ & $N_4$\\
\hline
$U$ & 1 & 1 & 0 & 0 & 0 & 1 & 0 & 0 & 0 & 0\\
$V$ & 1 & 1 & 0 & 0 & 0 & 0 & 0 & 0 & 0 & 0\\
$W$ & 0 & 0 & 6 & 3 & 1 & 0 & 0 & 0 & 0 & 0\\
$X$ & 0 & 0 & 3 & 1 & 0 & 0 & 0 & 0 & 0 & 0\\
$Y$ & 0 & 0 & 1 & 0 & 0 & 0 & 0 & 0 & 0 & 0\\
$Z$ & 1 & 0 & 0 & 0 & 0 & 1 & 0 & 0 & 0 & 0\\
\hline
$\nu_1$ & 0 & 0 & 0 & 0 & 0 & 0 & 1 & 0 & 0 & 0\\
$\nu_2$ & 0 & 0 & 0 & 0 & 0 & 0 & 0 & 1 & 0 & 0\\
$\nu_3$ & 0 & 0 & 0 & 0 & 0 & 0 & 0 & 0 & 1 & 0\\
$\nu_4$ & 0 & 0 & 0 & 0 & 0 & 0 & 0 & 0 & 0 & 9\\
\end{tabular}
\]
Henceforth, we will refer to these bases as the \textit{extended MS bases} or just \textit{MS bases} in dimension 6 and 3, respectively.
\end{proposition}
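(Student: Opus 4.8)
The plan is to reduce both basis claims to a single assertion: the displayed $10\times 10$ intersection matrix is nonsingular. Granting that, everything follows from the perfect pairing $A_3(\Hilb_3\PP^3)\times A_6(\Hilb_3\PP^3)\to\Q$ furnished by Proposition \ref{prop:chow-hom}. Writing $M_{ij}=s_i\cdot t_j$ for the ordered row collection $\{s_i\}=\{U,\dots,Z,\nu_1,\dots,\nu_4\}\subseteq A_3$ and column collection $\{t_j\}=\{\wt U,\dots,\wt Z,N_1,\dots,N_4\}\subseteq A_6$, a relation $\sum a_i s_i=0$ forces $aM=0$ and a relation $\sum b_j t_j=0$ forces $Mb=0$; when $\det M\neq 0$ both give $a=b=0$, and since $\dim A_3=\dim A_6=10$ each collection is then a basis. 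So all the work is in verifying the entries and $\det M\neq 0$.

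I would exploit the block form of the matrix. For the upper-left $6\times 6$ block I would use the projection formula. Let $i\colon\Hilb_3\PP^2\hookrightarrow\Hilb_3\PP^3$ be the inclusion as schemes contained in a fixed plane $\Pi_0$, so that $P_3=i_*1$ and the classes $U,\dots,Z\in A_3(\Hilb_3\PP^3)$ are the pushforwards of their $\PP^2$ counterparts. The defining property $\wt{\mathcal C}\cdot P_3=\mathcal C$ gives $i^*\wt{\mathcal C}=\mathcal C$ in $A(\Hilb_3\PP^2)$, whence $\mathcal B\cdot\wt{\mathcal C}=i_*(\mathcal B)\cdot\wt{\mathcal C}=\deg(\mathcal B\cdot i^*\wt{\mathcal C})=\mathcal B\cdot_{\PP^2}\mathcal C$. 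Thus this block is exactly Table \ref{table:co-dim3P2}, which is nonsingular because it is the Gram matrix of a basis under the perfect pairing on $\Hilb_3\PP^2$.

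Next I would dispatch the two off-diagonal blocks by placing all defining flags in general position and checking that supports are disjoint. Taking $\Pi_0$ general relative to the flag $Q\subseteq L\subseteq\Pi$ and the point $R\notin\Pi$, each of $N_1,\dots,N_4$ forces a configuration that cannot be contained in $\Pi_0$: for instance $N_3$ requires the point $Q\notin\Pi_0$, while each of $N_1,N_2,N_4$ forces a length-two subscheme spanning a line through a point off $\Pi_0$, impossible inside $\Pi_0$. Hence $N_j\cap\Hilb_3(\Pi_0)=\emptyset$ and the top-right block vanishes. The bottom-left block, pairing the (mostly nonreduced) classes $\nu_1,\dots,\nu_4$ against the lifts $\wt U,\dots,\wt Z$, I would handle identically, moving the flags defining each $\nu_i$ off the flag defining each lift so the supports cannot meet.

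Finally, the diagonal $4\times 4$ block. The entries $\nu_1N_1=\nu_2N_2=\nu_3N_3=1$ I would verify by the local transversality method already used for $M\cdot\mu=1$: write the ideal of a general member in affine coordinates on the relevant $\Hilb$, impose the equations of both loci, and check the resulting local ring is $1$-dimensional over $\C$; the vanishing of $\nu_iN_j$ for $i\neq j$ follows from the disjointness argument above. The main obstacle is $\nu_4\cdot N_4=9$. Here I would use the chosen scheme structures $N_4=P\cap M$ and $\nu_4=F\cap N_3\cap\overline{\OO_{2,\nr}}$, expressing both classes through the seven generators $P,H,P_3,\betarep,P_2\ell,p$ (using the class of $\overline{\OO_{2,\nr}}$ computed in Section \ref{subsec:O2nr}) and evaluating with the Rossell\'o-Llompart tables; the factor $9$ should appear as an intersection multiplicity along the fat-point directions rather than a transverse count, and confirming it is the delicate step. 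With the block structure in hand, $\det M=9\cdot\det(\text{Table \ref{table:co-dim3P2}})\neq 0$, which completes the proof.
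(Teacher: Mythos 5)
Your proposal is correct and follows essentially the same route as the paper's proof: exploit the block structure, reduce the upper-left block to Table \ref{table:co-dim3P2} via push-pull, get the off-diagonal blocks by disjointness of the cycles in general position, verify $\nu_i\cdot N_i=1$ ($i\le 3$) by the local transversality argument used for $M\cdot\mu$, and compute $N_4\cdot\nu_4 = (PM)(F\cdot N_3\cdot\overline{\OO_{2,\nr}})=9$ by expressing both factors in the generators $P,H,P_3,\betarep,P_2,\ell,p$ and evaluating with the Rossell\'o-Llompart tables. The only difference is that you spell out the deduction of the two basis claims from nonsingularity of the pairing matrix, which the paper leaves implicit.
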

\begin{proof}
The top-left matrix follows quickly from the definitions of the lifts. The top-right and bottom-left matrices are clearly zero. That leaves the bottom-right matrix.

The off-diagonal entries are clearly zero. For the diagonal entries: the first two diagonal entries are 1 by a similar argument to the computation of $M \cdot \mu$. The third diagonal entry is clearly 1 as the intersection $N_3 \nu_3 = N_3^3$ is transverse and supported at the point corresponding to $\Gamma = Q_1 \cup Q_2 \cup Q_3$ for fixed distinct points $Q_1,Q_2,Q_3$. That leaves the fourth diagonal entry. This can be computed by observing using that
\begin{align*}
N_4 \cdot \nu_4 &= (PM)(F \cdot N_3 \cdot \overline{\OO_{2,\nr}})\\
&=P(P^2 + PH - 3P_2)(F \cdot N_3 \cdot \overline{\OO_{2,\nr}}) \tag{Prop. \ref{prop:P3-conversion}}\\
&= P(P^2 + PH - 3P_2)((P+H)N_3(3(P_2 - PH + \ell + p))) \tag{Lemma \ref{lem:F=P+H}, Subsec. \ref{subsec:O2nr}}\\
&= P(P^2 + PH - 3P_2)((P+H)(3P_3 - P_2H + P \ell)(3(P_2 - PH + \ell + p))) \tag{Lem. \ref{lemma:class-contains-point}}
\end{align*}
Now that we have $N_4 \cdot \nu_4$ in terms of weight 9 monomials in $P,H,P_3,\betarep,P_2,\ell,p$, we can use Tables 1-10 in \cite{RL90} to get that $N_4 \cdot \nu_4 = 9$. Note that none of our cited lemmas in decomposing $N_4 \cdot \nu_4$ rely on knowing the value of $N_4 \cdot \nu_4$.
\end{proof}
\begin{proposition} We have the following conversion between the RL and MS bases in $A_{3}(\Hilb_3 \PP^3)$.
  \begin{align*}
  P_3H \ell &= U \\
  P_3 P_2 H &= -U+V+3Y+Z\\
  P_3^2 &= Y\\
  P_3 \betarep &= X\\
  P_3 H^3 &= 3U + W\\
  P_3 P H^2 &= V-W+4X+2Z \\
  (P_2^2H^2 + P_2 H^2 \ell) &= 2U + 6V + 4X + 9Y + 6Z + 3\nu_1 + \nu_2 + 2\nu_3\\
  P_2 \ell p &= V + Y + Z + \nu_1\\
  P_2 \ell^2 &= 2V + \nu_3\\
  PH\ell p &= 4U + V + 2X + Z + 4\nu_1 + 2\nu_2 + \nu_3 + \tfrac{1}{9}\nu_4
\end{align*}
Correspondingly, we have the following conversion between the RL and MS bases in $A_6(\Hilb_3 \PP^3)$.
\begin{align*}
P_3 &= \wt{Y}\\
P_2P &= \wt{U} - \wt{V} -\wt{Z} + N_4\\
P_2H &= -\wt{U} + \wt{V} + 3\wt{Y} + \wt{Z}\\
PH^2 &= \wt{V} - \wt{W} + 4\wt{X} + 2\wt{Z} + N_3 \\
P^2H &= -3\wt{U} + 2\wt{V} + \wt{W} - 4\wt{X} + 9\wt{Y} + \wt{Z} + N_1 + 2N_2 + N_3 \\
P \ell &= - \wt{U} + \wt{V} + \wt{Z} + N_3\\
P p &= 3 \wt{Y} + N_2\\
H^3 &= 3 \wt{U} + \wt{W} + N_3\\
H \ell &= \wt{U} + N_3\\
\betarep &= \wt{X}
\end{align*}
\end{proposition}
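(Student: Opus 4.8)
The plan is to prove both conversions by the principle of Proposition~\ref{prop:chow-hom}: since numerical and rational equivalence coincide, each asserted identity between classes in $A_3(\Hilb_3\PP^3)$ (resp.\ $A_6$) is equivalent to an equality of intersection numbers against a basis of the complementary group $A_6$ (resp.\ $A_3$). The most economical basis to test against is the extended MS basis, because its mutual intersection matrix is given explicitly by Proposition~\ref{prop:(co)dim3-MS-basis-ints} and is block diagonal: the upper $6\times 6$ block is the invertible $\Hilb_3\PP^2$ matrix of Table~\ref{table:co-dim3P2}, and the lower block is $\mathrm{diag}(1,1,1,9)$. Thus the whole proposition reduces to computing the \emph{mixed} pairing whose entries are the products of an RL basis element in one degree with an MS basis element in the complementary degree, and then inverting the known, invertible MS Gram matrix.

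Concretely, I would first establish the $A_3$ conversions. Writing $S$ for the sought change-of-basis matrix ($r_i^{RL}=\sum_j S_{ij}\,m_j^{MS}$) and $\Gamma_{MS}$ for the matrix of Proposition~\ref{prop:(co)dim3-MS-basis-ints}, pairing the ansatz against the MS basis $\{\wt U,\dots,N_4\}$ of $A_6$ gives $M_{\mathrm{mix}}=S\,\Gamma_{MS}$, where $(M_{\mathrm{mix}})_{il}=r_i^{RL}\cdot n_l^{MS}$, so that $S=M_{\mathrm{mix}}\,\Gamma_{MS}^{-1}$. The entries of $M_{\mathrm{mix}}$ are obtained by expressing each MS degree-$6$ class as a polynomial in the seven generators $P,H,P_3,\betarep,P_2,\ell,p$ and reading off the resulting weight-$9$ monomials from the tables of \cite{RL90} (corrected by Lemma~\ref{lem:corrections}). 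For the lifts this uses the lift property together with direct identifications such as $\wt Y=P_3$ and $\wt X=\betarep$; for the new classes it uses the decompositions already recorded in the proof of Proposition~\ref{prop:(co)dim3-MS-basis-ints}, namely $N_3=3P_3-P_2H+P\ell$ (Lemma~\ref{lemma:class-contains-point}), $N_4=PM$ with $M=P^2+PH-3P_2$ (Proposition~\ref{prop:P3-conversion}), $F=P+H$ (Lemma~\ref{lem:F=P+H}), and $\overline{\OO_{2,\nr}}=3(P_2-PH+\ell+p)$ from Section~\ref{subsec:O2nr}, along with the analogous but simpler expressions for $N_1,N_2$ and $\nu_1,\nu_2,\nu_3$.

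Once $S$ is known, the $A_6$ conversions follow ``correspondingly'' by duality: if $T$ denotes the change of basis $\rho_k^{RL}=\sum_l T_{kl}\,n_l^{MS}$ in $A_6$ and $\Gamma_{RL}$ is the matrix of Table~\ref{table:(co)dim3-P3}, then compatibility of the pairing forces $\Gamma_{RL}=S\,\Gamma_{MS}\,T^{\mathsf T}$, so that $T^{\mathsf T}=\Gamma_{MS}^{-1}S^{-1}\Gamma_{RL}$ is determined by data we already possess; equivalently, one may verify each $A_6$ identity directly by the same pairing test. I expect the main obstacle to be the geometric bookkeeping in the mixed pairing, in particular pinning down the correct scheme structures and multiplicities on the nonreduced classes. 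This is exactly where the factor of $9$ enters ($N_4\cdot\nu_4=9$, forcing the coefficient $\tfrac19\nu_4$ in the expansion of $PH\ell p$), and it is why $\nu_4$ is defined with the specific intersection-scheme structure $\nu_4=F\cap N_3\cap\overline{\OO_{2,\nr}}$ rather than the reduced one; keeping these normalizations consistent across all ten products is the delicate part, whereas the remaining linear algebra is routine and can be confirmed with the accompanying code.
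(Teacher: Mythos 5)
Your linear-algebra skeleton (pair against a basis of the complementary group, invert the known Gram matrix, recover the $A_6$ half by duality) is sound, and it is in fact the paper's skeleton too. The genuine gap is in how you populate $M_{\mathrm{mix}}$. You propose to compute the entries $r_i^{RL}\cdot n_l^{MS}$ by ``expressing each MS degree-$6$ class as a polynomial in the seven generators $P,H,P_3,\betarep,P_2,\ell,p$.'' Such expressions are available only for $\wt{X}=\betarep$, $\wt{Y}=P_3$, $N_3=3P_3-P_2H+P\ell$, and $N_4=P(P^2+PH-3P_2)$. For the remaining six classes $\wt{U},\wt{V},\wt{W},\wt{Z},N_1,N_2$ no such expressions exist at this stage of the paper, and producing them is (modulo the tables of \cite{RL90}) \emph{equivalent} to the $A_6$ conversion formulas you are trying to prove: for instance $\wt{U}=H\ell-N_3$, $N_2=Pp-3P_3$, and $\wt{W}=H^3-3\wt{U}-N_3$ are just rearrangements of the statements in the second list. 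Since you then plan to deduce the $A_6$ conversions from the $A_3$ ones by duality, the argument is circular exactly on the twenty-four entries of $M_{\mathrm{mix}}$ that pair the four RL classes with no $P_3$ factor --- $(P_2^2H^2+P_2H^2\ell)$, $P_2\ell p$, $P_2\ell^2$, $PH\ell p$ --- against those six classes. The lift property $\wt{\mathcal{B}}\cdot P_3=\mathcal{B}$ only rescues rows that do contain a $P_3$ factor, where push-pull reduces everything to $\Hilb_3\PP^2$; it says nothing about the other four rows, and the ``analogous but simpler expressions for $N_1,N_2$'' you invoke are precisely what is not available.

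The paper breaks this circle by never pairing RL $A_3$ classes against the hard MS $A_6$ classes. It computes the \emph{transposed} mixed pairing instead: the six $P_3$-divisible RL classes are handled by push-pull to $\Hilb_3\PP^2$ (giving the first six equations, hence $U,\dots,Z$ in terms of RL monomials), while the columns for $\nu_1,\dots,\nu_4$ come from Lemma \ref{lem:nu-int-prods}, which pairs the concrete low-dimensional loci $\nu_i$ against the RL \emph{monomial} basis of $A_6$ --- computations that are tractable because most products vanish for support reasons and the rest reduce to $\Hilb_2\PP^3$, $\Hilb_2\PP^2$, or a conic degeneration --- and then inverts the RL Gram matrix of Table \ref{table:(co)dim3-P3} rather than $\Gamma_{MS}$. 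Assembling these gives $E_{3,\MS,\RL}$, whose inverse is the $A_3$ statement, and only then is the $A_6$ statement extracted by the duality identity you also wrote down. To repair your proposal you would either have to carry out the analogue of that lemma for your transposed pairing (substantially harder, since $\wt{U},\wt{V},\wt{W},\wt{Z},N_1,N_2$ are large incidence loci whose products with RL monomials require excess-intersection analysis), or first establish the six missing polynomial expressions directly --- which amounts to proving the $A_6$ half before, not after, the $A_3$ half.
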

\begin{proof}
The first six equations for $A_3(\Hilb_3 \PP^3)$ come from a computation in $\Hilb_3 \PP^2$. The formulas for $P_3 H \ell, P_3^2, P_3 \betarep, P_3 H^3$ are immediate. Then observing that
\begin{align*}
\begin{tabular}{c|cccccccccc}
 & $\wt{U}$ & $\wt{V}$ & $\wt{W}$ & $\wt{X}$ & $\wt{Y}$ & $\wt{Z}$ & $N_1$ & $N_2$ & $N_3$ & $N_4$\\
\hline
$P_3P_2H$ & 1 & 0 & 3 & 0 & 0 & 0 & 0 & 0 & 0 & 0 \\
$P_3 PH^2$ & 3 & 1 & 6 & 1 & -1 & 2 & 0 & 0 & 0 & 0
\end{tabular}
\end{align*}
yields $P_3 P_2 H = - U + V + Z + 3Y$ and $P_3 P H^2 = V - W + 4X + 2Z$. The above intersection products are a straightforward computation, though note that $P = F-H$ is used in the computation of $P_3PH^2 \cdot \wt{Y}$. 

From this we can solve for $U,V,W,X,Y,Z$ in terms of $P_3H \ell, P_3 P_2 H, P_3^2, P_3 \betarep, P_3 H^3, P_3 P H^2$. We may also use the results of Lemma \ref{lem:nu-int-prods} along with Table \ref{table:(co)dim3-P3} to compute $\nu_1,\nu_2,\nu_3,\nu_4$ in terms of the RL basis.

In the end, we get the following expression for $E_{3,\MS,\RL}$, the matrix whose $i$-th column expresses the $i$-th member of the MS basis for $A_3(\Hilb_3 \PP^3)$ in terms of the RL basis.
\begin{align*}
E_{3,\MS,\RL} = 
\begin{bmatrix}
1 &  5 & -3 & 0 & 0 & -4 & -1 & -1 & -2 &  27\\
0 &  2 &  0 & 0 & 0 & -1 & -1 & -3 &  0 &  81\\
0 & -6 &  0 & 0 & 1 &  3 &  3 &  0 &  0 & -81\\
0 &  4 &  0 & 1 & 0 & -4 & -1 & -1 &  0 &  36\\
0 & -1 &  1 & 0 & 0 &  1 &  0 &  0 &  0 &  0\\
0 & -1 &  0 & 0 & 0 &  1 &  0 &  0 &  0 &  0\\
0 &  0 &  0 & 0 & 0 &  0 &  0 &  1 &  0 & -18\\
0 &  0 &  0 & 0 & 0 &  0 &  1 & -3 &  0 &  18\\
0 &  0 &  0 & 0 & 0 &  0 &  0 & -2 &  1 &  27\\
0 &  0 &  0 & 0 & 0 &  0 &  0 &  0 &  0 &  9
\end{bmatrix}
\end{align*}
Taking the inverse of this yields $E_{3,RL,MS}$, the matrix whose $i$-th column expresses the $i$-th member of the RL basis in terms of the MS basis. 
\begin{align*}  E_{3,\RL,\MS} = 
\begin{bmatrix}
1 & -1 & 0 & 0 & 3 &  0 & 2 & 0 & 2 & 4\\
0 &  1 & 0 & 0 & 0 &  1 & 6 & 1 & 0 & 1\\
0 &  0 & 0 & 0 & 1 & -1 & 0 & 0 & 0 & 0\\
0 &  0 & 0 & 1 & 0 &  4 & 4 & 1 & 0 & 2\\
0 &  3 & 1 & 0 & 0 &  0 & 9 & 0 & 0 & 0\\
0 &  1 & 0 & 0 & 0 &  2 & 6 & 1 & 0 & 1\\
0 &  0 & 0 & 0 & 0 &  0 & 3 & 1 & 0 & 4\\
0 &  0 & 0 & 0 & 0 &  0 & 1 & 0 & 0 & 2\\
0 &  0 & 0 & 0 & 0 &  0 & 2 & 0 & 1 & 1\\
0 &  0 & 0 & 0 & 0 &  0 & 0 & 0 & 0 & \tfrac{1}{9}
\end{bmatrix}
\end{align*}
This yields the first set of equations. For the second set, we can obtain $E_{6,RL,MS}$, the matrix whose $i$-th column expresses the $i$-th member of the RL basis in terms of the MS basis as 
\[((I_{3,6,\MS} E_{3,\MS,\RL})^{\perp})^{-1}I_{3,6,\RL}= E_{6,\RL,\MS}=
\begin{bmatrix}
0 &  1 & -1 &  0 & -3 & -1 & 0 & 3 & 1 & 0\\
0 & -1 &  1 &  1 &  2 &  1 & 0 & 0 & 0 & 0\\
0 &  0 &  0 & -1 &  1 &  0 & 0 & 1 & 0 & 0\\
0 &  0 &  0 &  4 & -4 &  0 & 0 & 0 & 0 & 1\\
1 &  0 &  3 &  0 &  9 &  0 & 3 & 0 & 0 & 0\\
0 & -1 &  1 &  2 &  1 &  1 & 0 & 0 & 0 & 0\\
0 &  0 &  0 &  0 &  1 &  0 & 0 & 0 & 0 & 0\\
0 &  0 &  0 &  0 &  2 &  0 & 1 & 0 & 0 & 0\\
0 &  0 &  0 &  1 &  1 &  1 & 0 & 1 & 1 & 0\\
0 &  1 &  0 &  0 &  0 &  0 & 0 & 0 & 0 & 0
\end{bmatrix}\]
$I_{3,6,MS}$ is the intersection matrix for $A_3(\Hilb_3 \PP^3) \times A_6(\Hilb_3\PP^3)$ under the extended MS basis. It is calculated in Proposition \ref{prop:(co)dim3-MS-basis-ints}. $I_{3,6,RL}$ is the intersection matrix for $A_3(\Hilb_3 \PP^3) \times A_{6}(\Hilb_3 \PP^3)$ using the RL basis, which is given in Table \ref{table:(co)dim3-P3}.
\end{proof}
\section{Analyzing the seven orbits}\label{section:orbits}
There are seven orbits under the $\textup{PGL}_n(\C)$ action on $\PP^3$; We describe them now. The subscripts indicate the codimension of the orbit.
\begin{enumerate}[1)]
\item $\OO_{0}$, the locus consisting of three distinct, non collinear points.
\item $\OO_{1}$, the locus of non-collinear schemes whose support consists of two points.
\item $\OO_{2,\col}$, the locus consisting of three distinct collinear points.
\item $\OO_{2,\nr}$, the locus of curvilinear schemes supported at a single point that are not contained in a line and are not isomorphic to a fat point in a plane.
\item $\OO_{3,\col}$, the locus of collinear schemes supported at two distinct points.
\item $\OO_{4, \col}$, the locus of schemes supported at a single point and contained in a line.
\item $\OO_{4,\textup{max}}$, the locus of schemes isomorphic to a fat point in a plane, i.e. isomorphic to the variety defined by $\mathcal{I}_{p}^2$ where $\mathcal{I}_p$ is the ideal of a point viewed inside of a plane. In particular, their coordinate ring is isomorphic to $\C[x,y]/(x^2,xy,y^2)$.
\end{enumerate}

In order to utilize Lemma \ref{lemma:kleiman-analog}, we need to analyze the classes of the orbit closures in $A(\Hilb_3(\PP^3))$, as well as the Chow groups and positive cones of many of them. Some of the rays of the positive cones are written as intersections with some auxiliary classes-- the computations of such classes is carried out in Appendix \ref{appendix:computations}. For classes parametrizing certain nonreduced schemes we do not always take the reduced structure, and instead give these loci the scheme structure coming from viewing it as an intersection with $\overline{\OO_{1,\nr}}$. This only scales the class by a positive constant, and so does not affect the computation of the nef and effective cones.
\subsection{Planar fat points}\label{subsec:O4max} Note that $\overline{\OO_{4,\max}} =  \OO_{4,\textup{max}}$. To compute its class, note that $\overline{\OO_{2,\nr}} \cdot F^2$ is supported on two components: $\OO_{4,\max}$ and $\overline{\OO_{2,\nr}} \cdot D'$, where $D'$ denotes the locus of schemes $\Gamma \in \Hilb_3 \PP^3$ containing a length two subscheme $\Gamma'$ such that the line containing $\Gamma'$ is incident to two fixed lines $L,L'$. Thus,
\[\ell \OO_4 = \overline{\OO_{2,\nr}} \cdot (F^2 - kD').\]
Before determining $\ell, k$, we first work on rewriting $D'$. Let $\mathbb{G}(1,3)$ denote the Grassmannian of projective lines in projective 3-space. For a subvariety $\Sigma \subseteq \mathbb{G}(1,3)$, we can define subvarieties $Z_{\Sigma}, Z_{\Sigma}'$ of $\Hilb_3 \PP^3$, see Section \ref{appendix:Grassmannian}.

Since $Z'_{\Sigma_1^2} = Z'_{\Sigma_{1,1}} + Z'_{\Sigma_{2,0}}$ (see Construction \ref{construction:Z-sigma'}), we have that
\begin{align*}
D' = Z'_{\Sigma_1^2} = Z'_{\Sigma_{1,1}} + Z'_{\Sigma_{2,0}} = \wt{D} + M.
\end{align*}
Then,
\begin{align*}
  \ell \OO_4 = \overline{\OO_{2,\nr}} \cdot (F^2 - k(\widetilde{D} + M)).
\end{align*}
Now, observe that $\OO_{4,\max} \cdot P_3 \cdot P \cdot H$ is equal to $\OO_{4,\max,\PP^2} \cdot \overline{\OO_{1,\col,\PP^2}} H = 0$. On the other hand,
\begin{align*}
\overline{\OO_{2,\nr}}\cdot F^2 \cdot P_3 P H &= \overline{\OO_{2,\nr,\PP^2}} \cdot \overline{\OO_{1,\col,\PP^2}} \cdot H F^2 = 27\\
\overline{\OO_{2,\nr}}\cdot (\widetilde{D} + M) \cdot P_3PH &= \overline{\OO_{2,\nr,\PP^2}} \cdot \overline{\OO_{1,\col,\PP^2}} \cdot D \cdot H = 9,
\end{align*}
see \cite{RS21} or \cite{EL86}. Here, $\overline{\OO_{2,\nr,\PP^2}}$ denotes the locus of totally nonreduced schemes in $\Hilb_3 \PP^2$, and $\overline{\OO_{1,\col,\PP^2}}$ denotes the locus of collinear schemes in $\Hilb_3 \PP^2$, and the intersection products on the right side are taken in $A_{\bullet}(\Hilb_3\PP^2)$. So, $k = 3$. Furthermore, since
\begin{align*}
\overline{\OO_{4,\max}} \cdot P_3 H^2 &= \overline{\OO_{4,\max,\PP^2}} \cdot H^2 = 9,\\
\overline{\OO_{2,\nr}} \cdot (F^2 - 3(\wt{D} +M))\cdot P_3 H^2 &= \overline{\OO_{2,\nr,\PP^2}} \cdot (F^2 - 3D)H^2 = 27,
\end{align*}
(see Section 5.1 in \cite{RS21} and \cite{EL86}) we have that $\ell = 3$. Later on in Section \ref{subsec:O2nr} we compute that $\overline{\OO_{2,\nr}} = 3(P_2 - PH + \ell + p)$. This yields the following formula.
\begin{align*}
\overline{\OO_{4,\max}} &= (P_2 - PH + \ell + p)((P+H)^2 - 3(\wt{D} + M))\\
&=(P_2 - PH + \ell + p)((P+H)^2 - 3(p + (P^2 + PH - 3P_2))) \tag{Prop \ref{prop:P3-conversion}}\\
&=(P_2 - PH + \ell + p)(-2P^2 - PH + H^2 - 3p + 9P_2).
\end{align*}

To compute the nef and effective cones, we first observe that $\OO_{4,\textup{max}}$ is isomorphic to the flag variety of pairs $(p,\Lambda)$ of a point and a plane in $\PP^3$. Hence we have:
\begin{center}
  \begin{tabular}{ c|l } 
   $k$ & $\dim A_k(\OO_{4,\max})$   \\ 
   \hline 
   5 & 1 \\ 
   4 & 2\\ 
   3 & 3 \\ 
   2 & 3 \\ 
   1 & 2 \\ 
   0 & 1\\
  \end{tabular}
\end{center}
We want to determine generators for the nef and effective cones. Let $Q \subseteq L \subseteq \Pi$ be a fixed point, line, and plane respectively. Let $R$ be an additional point not in $\Pi$. We begin with the curve and fourfold classes. Let $X_{1,1}$ be the locus of planar fat points supported at $Q$ and coplanar with $R$. Let $X_{1,2}$ be the locus of planar fat points supported on $L$ and contained in $\Pi$. Then:
\begin{align*}
X_{1,1} &= \OO_{4,\max} \cdot Y_1 P \tag{see Lemma \ref{lemma:class-contains-point}}\\
X_{1,2} &= \OO_{4,\max} \cdot \widetilde{\alpha} = \OO_{4,\max} \cdot P_3 H.
\end{align*}
Let $X_{4,1}$ be the locus of planar fat points $\Gamma$ coplanar with $Q$. Let $X_{4,2}$ be the locus of planar fat points supported at some point on $\Pi$.
\begin{align*}
X_{4,1} &= \OO_{4,\max} \cdot P\\
X_{4,2} &= \OO_{4,\max} \cdot H
\end{align*}
For surfaces, let $X_{2,1}$ be the locus of planar fat points contained in the plane $\Pi$, and let $X_{2,2}$ be the locus of planar fat points $\Gamma$ supported at a point of $L$ and coplanar with $L$. Lastly, let $X_{2,3}$ be the locus of planar fat points supported at $Q$. See Figure \ref{fig:2folds-O4max} for a picture of a general member of each locus.
\begin{figure}[h]
  \stepcounter{theorem}
  \begin{center}
\begin{tikzpicture}[scale=.70]
  \filldraw[fill=white](0,1)--(1,2)--(3,1)--(2,0)--(0,1);
  \draw (1.5,1) circle (.13);
  \filldraw[opacity=.3] (1.5,1) circle (.19);

  \node at (1.5,-.75) {$X_{2,1}$};
\end{tikzpicture}\quad\quad\quad
\begin{tikzpicture}[scale=.70]
  \filldraw[dashed,fill=white](0,1)--(1,2)--(3,1)--(2,0)--(0,1);
  \draw (1.5,1) circle (.13);
  \filldraw[opacity=.2] (1.5,1) circle (.19);
  \draw (1,.5)--(2,1.5);

  \node at (1.5,-.75) {$X_{2,2}$};
\end{tikzpicture}\quad\quad\quad
\begin{tikzpicture}[scale=.70]
  \filldraw[dashed,fill=white](0,1)--(1,2)--(3,1)--(2,0)--(0,1);
  \filldraw (1.5,1) circle (.13);
  \filldraw[opacity=.2] (1.5,1) circle (.19);

  \node at (1.5,-.75) {$X_{2,3}$};
\end{tikzpicture}\end{center}
\caption{Pictures of a general member of $X_{2,1}, X_{2,2},X_{2,3}$, the generators of $\Nef^3(\OO_{4,\max}) = \Eff_2(\OO_{4,\max})$. The grey "fuzz" on the point is included to highlight that it is a point with multiplicity. The plane in each diagram is the plane in which the ideal of the point is squared.}
\label{fig:2folds-O4max}
\end{figure}
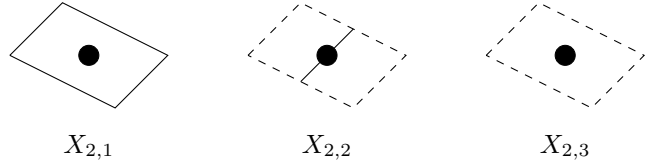
\begin{align*}
X_{2,1} &= \OO_{4,\max} \cdot P_3 = 3(\alpha + \delta + \epsilon) \\
X_{2,2} &= \OO_{4,\max} \cdot Y_3 \\
&= \OO_{4,\max}\cdot (P\ell - Y_1) \tag{Lemma \ref{lemma:class-contains-point}}\\
&= 3(-2 \alpha + \beta + 3 \mu)\\
X_{2,3} &= \OO_{4,\max} \cdot Y_1\\
&= 3(\alpha - \beta + \gamma - \delta - 2\mu) 
\end{align*}
For threefolds, let $X_{3,1}$ be the locus of planar fat points supported at a point of $L$, let $X_{3,2}$ be the locus of planar fat points supported at a point on $\Pi$ and coplanar with $R$, and let $X_{3,3}$ be the locus of planar fat points coplanar with $L$. 
\begin{align*}
X_{3,1} &= \OO_{4,\max} \cdot \ell\\
        &=3(W-3X+3Y-2\nu_1 -2\nu_2 + \tfrac{1}{9}\nu_4)\\
X_{3,2} &= \OO_{4,\max} \cdot H P\\
        &= 3(3U-2V+X-3Y-Z+\nu_1+\nu_2+\tfrac{1}{3}\nu_4)\\
        &=3((3U-2V-W+4X-6Y-Z)+(W-3X+3Y)+\nu_1 + \nu_2 + \tfrac{1}{3}\nu_4)\\
X_{3,3} &= \OO_{4,\max} \cdot P_2\\
&= 4V+X+3Y+2Z+3\nu_1 + 3\nu_3
\end{align*}
\begin{proposition} We have the following expressions for the nef and effective cones of $\OO_{4,\max}$.
\begin{align*}
\Nef^1(\OO_{4,\max}) &= \Eff_4(\OO_{4,\max}) = \langle X_{4,1}, X_{4,2} \rangle\\
\Nef^2(\OO_{4,\max}) &= \Eff_3(\OO_{4,\max}) = \langle X_{3,1}, X_{3,2}, X_{3,3}\rangle\\
\Nef^3(\OO_{4,\max}) &= \Eff_2(\OO_{4,\max}) =  \langle X_{2,1},X_{2,2},X_{2,3}\rangle\\
 \Nef^4(\OO_{4,\max}) &= \Eff_1(\OO_{4,\max}) = \langle X_{1,1}, X_{1,2} \rangle.
\end{align*}
\end{proposition}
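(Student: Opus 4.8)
The plan is to exploit the fact, already recorded above, that $\OO_{4,\max}$ is isomorphic to the incidence variety of point--plane flags $\{(p,\Lambda): p\in\Lambda\}$ in $\PP^3$, a smooth projective variety on which $\textup{PGL}_4(\C)$ acts transitively. On such a homogeneous space the positive cones in every (co)dimension are governed by Schubert calculus, and I would organize the whole argument around this. First I would observe that each listed class $X_{i,j}$ is, up to the positive scalar appearing in its expression (so that the \emph{ray} is unaffected), a Schubert class: $X_{4,1},X_{4,2}$ impose a single incidence condition (the plane $\Lambda$ passes through $Q$, resp.\ the support point $p$ lies on $\Pi$); $X_{3,1},X_{3,2},X_{3,3}$ impose the rank-two conditions ($p\in L$; $p\in\Pi$ together with $\Lambda\ni R$; $\Lambda\supseteq L$); and similarly in dimensions $2$ and $1$. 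In each dimension $k$ the number of these classes equals $\dim A_k(\OO_{4,\max})$ as listed in the table above, so they form the Schubert basis. In particular the cone they generate is contained in $\Eff_k(\OO_{4,\max})$, since the $X_{i,j}$ are effective by construction; it remains to prove the reverse containment and then read off the nef cones.

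For the reverse containment I would apply Kleiman's transversality theorem via the transitive $\textup{PGL}_4(\C)$-action. Given an irreducible $k$-dimensional subvariety $Y$ and any Schubert class $t$ of complementary dimension $5-k$, a general group translate of a Schubert representative of $t$ meets $Y$ transversally, whence $[Y]\cdot t\ge 0$. Because the Schubert bases of $A_k$ and $A_{5-k}$ are Poincar\'e dual---after reordering, the intersection pairing between them is the identity matrix---expanding $[Y]=\sum_i c_i s_i$ in the Schubert basis $\{s_i\}$ of $A_k$ recovers each coefficient as $c_i=[Y]\cdot t_i\ge 0$. Thus every effective class is a non-negative combination of the $X_{i,j}$, establishing the four equalities $\Eff_k(\OO_{4,\max})=\langle X_{(5-k),\bullet}\rangle$.

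The nef cones then follow formally. By definition $\Nef^k(\OO_{4,\max})$ is the dual of $\overline{\Eff}_k(\OO_{4,\max})$, which we have just identified with the simplicial Schubert cone $\langle s_1,\dots,s_m\rangle$. Using $s_i\cdot t_j=\delta_{ij}$, for a class $\eta=\sum_j a_j t_j$ in $N^k\cong N_{5-k}$ the defining inequalities $\eta\cdot s_i\ge 0$ read simply $a_i\ge 0$, so the dual cone is $\langle t_1,\dots,t_m\rangle=\Eff_{5-k}(\OO_{4,\max})$. This gives all four identities $\Nef^k=\Eff_{5-k}$ at once.

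The conceptual input---Kleiman transversality and Poincar\'e duality of Schubert classes---is standard for homogeneous varieties, so the genuine work is verificational rather than a new idea. The hard part will be the bookkeeping: confirming that the explicitly described loci $X_{i,j}$ really exhaust the Schubert classes and are linearly independent (readable from their expansions in the extended MS and RL bases already computed), and checking that the intersection pairing between complementary-dimension generators is, after reordering, the identity, so that the dual-cone computation in the previous paragraph is valid. These amount to a finite set of intersection-number checks.
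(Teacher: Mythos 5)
Your proposal is correct and follows essentially the same route as the paper: the paper's (very terse) proof likewise rests on transitivity of the $\textup{PGL}_4(\C)$-action on the point--plane flag variety $\OO_{4,\max}$ (giving the Kleiman-type positivity) together with the vanishing of the off-diagonal intersections among the $X_{i,j}$ (the Poincar\'e-dual Schubert pairing), from which both the effective-cone identification and the duality $\Nef^k = \Eff_{5-k}$ follow. Your write-up simply makes explicit the Schubert-calculus bookkeeping that the paper leaves implicit.
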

\begin{proof}
This follows the emptiness of the apppropriate intersections and from the action of $\textup{PGL}_4(\C)$ being transitive on $\OO_{4,\max}$.
\end{proof} 
\subsection{Collinear schemes supported at a single point} Note that $\overline{\OO_{4,\col}} = \OO_{4,\col}$. It has the class
\begin{align*}
\overline{\OO_{4,\col}} &= \overline{\OO_{2,\col}} \cdot \overline{O_{2,\nr}}\\
&= 3(P_2 - PH + \ell + p)(P^2 - P_2)
\end{align*}
Further, it is isomorphic to the flag variety of pairs $(p,L)$ of a point and a line in $\PP^3$. Hence its Chow groups have the following dimensions.
\begin{center}
  \begin{tabular}{ c|l } 
   $k$ & $\dim A_k(\OO_{4,\col})$   \\ 
   \hline 
   5 & 1 \\ 
   4 & 2\\ 
   3 & 3 \\ 
   2 & 3 \\ 
   1 & 2 \\ 
   0 & 1\\
  \end{tabular}
\end{center}
Let $Q \subseteq L \subseteq \Pi$ be a fixed point, line, and plane respectively. For curve classes, let $X_{1,3},$ be the locus of collinear, totally nonreduced schemes contained in $L$, and let $X_{1,4}$ be the locus of collinear, totally nonreduced schemes $\Gamma$ supported at $Q$ and contained in $\Pi$.
\begin{align*}
X_{1,3} &= \frac{1}{9} \OO_{4,\col} \cdot p^2 = 3(-\phi+2\psi) = 3(-H \ell^2 + 2 H \ell p)\\
X_{1,4} &= \frac{1}{3} \OO_{4,\col} \cdot p\ell = 9(\phi - \psi) = 9(H\ell^2 - H\ell p)
\end{align*}
For fourfolds, let $X_{4,3}$ be the locus of collinear, totally nonreduced schemes supported at a point of $\Pi$, and $X_{4,4}$ be the locus of collinear, totally nonreduced schemes $\Gamma$ that are coplanar with $L$.
\begin{align*}
X_{4,3} &= \OO_{4,\col} \cdot H\\
X_{4,4} &= \frac{1}{3} \OO_{4,\col} \cdot F
\end{align*}
For surfaces, let $X_{2,4}$ be the locus of collinear, totally nonreduced schemes supported at a fixed point $Q$. Let $X_{2,5}$ be the locus of collinear, totally nonreduced scehemes supported at a point of $L$ and contained in $\Pi$. Let $X_{2,6}$ be the locus of collinear, totally nonreduced schemes $\Gamma$ that are collinear with $Q$ and contained in $\Pi$. 
\begin{align*}
X_{2,4} &= \OO_{4,\col} \cdot Y_1 \\
&= 3(2 \alpha - 2 \beta + 2\gamma - 2\delta + 3 \mu)\\
X_{2,5} &= \frac{1}{3} \OO_{4,\col} \cdot H p \\
&= 9(-2\alpha + \beta) \\
X_{2,6} &= \frac{1}{9} \OO_{4,\col} \cdot F p \\
&= 3(\beta - \gamma + 2\delta + \epsilon)
\end{align*}
Lastly, for threefolds, let $X_{3,4}$ be the locus of collinear, totally nonreduced schemes that are collinear with $Q$. Then let $X_{3,5}$ be the locus of collinear, totally nonreduced schemes contained in $\Pi$, and let $X_{3,6}$ be the locus of collinear, totally nonreduced schemes supported at a point of $L$.
\begin{align*}
X_{3,4} &= \frac{1}{3} \OO_{4,\col} \cdot M\\
        &=3(-U+2V+2Y+Z+\nu_1+\nu_3-\tfrac{1}{3}\nu_4)\\
        &=3(U-Z + \nu_1 + \nu_3 - \tfrac{1}{3}) + 6V + 6Y\\
X_{3,5} &= \frac{1}{3}\OO_{4,\col} \cdot p\\
        &= 3(3U-2V-W+4X-6Y-Z)\\
X_{3,6} &= \OO_{4,\col} \cdot \ell\\
        &=3((2W-6X+6Y)+3\nu_1+3\nu_2)
\end{align*}
\begin{proposition} We have the following expressions for the nef and effective cones of $\OO_{4,\col}$.
  \begin{align*}
    \Nef^1(\OO_{4,\col}) &= \Eff_4(\OO_{4,\col}) = \langle X_{4,3}, X_{4,4} \rangle\\
    \Nef^2(\OO_{4,\col}) &= \Eff_3(\OO_{4,\col}) = \langle X_{3,4}, X_{3,5}, X_{3,6}\rangle\\
    \Nef^3(\OO_{4,\col}) &= \Eff_2(\OO_{4,\col}) =  \langle X_{2,4},X_{2,5},X_{2,6}\rangle\\
     \Nef^4(\OO_{4,\col}) &= \Eff_1(\OO_{4,\col}) = \langle X_{1,3}, X_{1,4} \rangle.
    \end{align*}
\end{proposition}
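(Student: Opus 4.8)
The plan is to exploit the fact, recorded just above the statement, that $\OO_{4,\col}$ is isomorphic to the flag variety of incident pairs $(p,L)$ of a point and a line in $\PP^3$. This is a smooth projective variety of dimension $5$ on which $\textup{PGL}_4(\C)$ acts transitively, so the argument parallels the proof given for $\OO_{4,\max}$ and rests on the full Kleiman transversality theorem rather than on Lemma \ref{lemma:kleiman-analog} (which is needed only when there are several orbits).

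First I would recall the general behaviour of positive cones on a rational homogeneous space $G/P$: the Bruhat decomposition gives $A_j$ a basis of Schubert classes, and the effective cone $\Eff_j$ is precisely the simplicial cone they generate. Here the Schubert classes are the loci cut out by incidence conditions against a fixed complete flag $Q \subseteq L \subseteq \Pi$, and the classes $X_{i,j}$ are exactly these loci, up to the positive scalars appearing in their definitions (which do not affect the rays). For instance, in codimension $1$ the class $X_{4,3}$ imposes that the point lie on $\Pi$ and $X_{4,4}$ that the line meet $L$, the two divisorial Schubert conditions. Comparing with the dimensions $\dim A_k(\OO_{4,\col}) = 1,2,3,3,2,1$ computed above, I would check that in each degree the listed $X_{i,j}$ constitute the full set of Schubert generators, hence a basis, so that $\Eff_{5-k}(\OO_{4,\col}) = \langle X_{i,j}\rangle$ for the appropriate indices.

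Next I would establish $\Nef^k = \Eff_{5-k}$. Nefness of each complementary-dimension Schubert class follows from Kleiman transversality: a general $\textup{PGL}_4(\C)$-translate of $X_{i,j}$ meets any effective cycle properly and in non-negative multiplicity. For the reverse inclusion, the dual basis of the Schubert basis under the intersection pairing is, by Poincar\'e duality on $G/P$, the family of opposite-flag Schubert classes; since $\textup{PGL}_4(\C)$ is connected, these are numerically equivalent to the ordinary Schubert classes, so the dual cone of the simplicial cone $\Eff_k$ is again the Schubert cone $\Eff_{5-k}$. Concretely this amounts to verifying that the relevant intersection products among the $X_{i,j}$ vanish except along the expected diagonal pairing, i.e. the emptiness of the appropriate intersections after a general translate, exactly as in the $\OO_{4,\max}$ case.

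The steps above are all routine once homogeneity is in hand; the only point requiring genuine care is the bookkeeping in the second paragraph, namely confirming that each displayed list is complete and linearly independent and that the incidence description of every $X_{i,j}$ really is a single Schubert condition rather than a reducible locus. This is immediate from the point--line flag structure, and the vanishing intersections that pin down the dual pairing are the same emptiness computations invoked for $\OO_{4,\max}$.
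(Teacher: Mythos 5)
Your proposal is correct and follows essentially the same route as the paper: the paper's one-line proof likewise rests on the transitivity of the $\textup{PGL}_4(\C)$-action (Kleiman transversality, giving nefness of effective classes) together with the emptiness of the appropriate intersections, which is exactly the diagonal pairing between complementary-dimensional generators that you obtain from Schubert duality on the point--line flag variety. The only difference is one of packaging: you route the duality step through general $G/P$ Schubert calculus, identifying each $X_{i,j}$ as a Schubert class, whereas the paper verifies the same vanishing intersections directly for the listed classes.
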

\begin{proof}
  This follows the emptiness of the apppropriate intersections and from the action of $\textup{PGL}_4(\C)$ being transitive on $\OO_{4,\col}$.
\end{proof}
\subsection{Collinear and nonreduced schemes} Note that 
\begin{align*}
\overline{\OO_{3,\col}} = \overline{\OO_{2,\col}} \cdot \overline{\OO_1} &= (P^2-P_2)\cdot 2(H-P)\\
&= 2 (P^2H - P^3 - P_2 H + P_2 P)\\
&= 6P_3 - 6 P_2 P -4 P_2 H + 4 P^2 H
\end{align*}
For the Chow groups, note that $\overline{\OO_{3,\col}}$ is a $\PP^1 \times \PP^1$ bundle over $\mathbb{G}(1,3)$ and as such its Chow groups have the following dimensions:
\begin{center}
  \begin{tabular}{ c|l } 
   $k$ & $\dim A_k(\overline{\OO_{3,\col}})$   \\ 
   \hline
   6 & 1  \\ 
   5 & 3 \\ 
   4 & 5\\ 
   3 & 6 \\ 
   2 & 5 \\ 
   1 & 3 \\ 
   0 & 1\\
  \end{tabular}
\end{center}
Let $Q \subseteq L \subseteq \Pi$ be a point, line, and plane in $\PP^3$ respectively. Let ${\Pi}'$ denote an additional plane in $\PP^3$, and let $L'$ denote an additional line in $\PP^3$.

Let $X_{5,1}$ denote the locus whose generic member is the union of a nonreduced point supported on $\Pi$ and a reduced point, and let $X_{5,2}$ be the locus whose generic member is the union of a reduced point supported on $\Pi$ and a nonreduced point. Furthermore, let $X_{5,3}$ be the locus of collinear, nonreduced schemes that are coplanar with $L$. Also, let $X_{5,4}$ be the locus of totally nonreduced collinear schemes. Then:
\begin{align*}
X_{5,1} &= \overline{\OO_{2,\col}} \cdot Y_2\\
X_{5,2} &= \overline{\OO_{2,\col}} \cdot Y_3 \\
X_{5,3} &= \frac{1}{3} \overline{\OO_{3,\col}} \cdot F\\
X_{5,4} &= \overline{\OO_{2,\nr}} \cdot \overline{\OO_{2,\col}}
\end{align*}
And now let $X_{1,5}$ be the locus of collinear nonreduced schemes contained in $L$ with a length two subscheme supported at $Q$, and let $X_{1,6}$ be the locus whose generic member is the union of $Q$ and a nonreduced scheme of length two contained in $L$. Let $X_{1,7}$ be the locus of collinear, totally nonreduced schemes contained in $\Pi$ and supported at $Q$.

Also, let $X_{1,8}$ be the locus of nonreduced collinear schemes consisting of $Q$ and a nonreduced scheme supported at a point of $L'$, and $X_{1,9}$ be the locus of nonreduced collinear schemes consisting of a nonreduced length two scheme supported at $Q$ and a point on $L'$. Using 5.4 in \cite{RS21}, we see:
\begin{align*}
    X_{1,5} &= 2(-P_3 H \ell^2 + 2P_3 H \ell p)\\
    X_{1,6} &= 4(-P_3 H \ell^2 + 2P_3 H \ell p)\\
    X_{1,7} &= X_{1,4}\\
    X_{1,8} &= 2(P_3 H \ell^2 + P_3 H \ell p)\\
    X_{1,9} &= 2(2 P_3 H \ell^2 - P_3 H \ell p)
\end{align*}
For fourfolds, let $X_{4,5}$ be the locus of collinear nonreduced schemes with a nonreduced length 2 subscheme supported at a point of $L$, and let $X_{4,6}$ be the locus of collinear nonreduced schemes whose generic member is the union of a nonreduced length 2 scheme and a point of $L$. Let $X_{4,7}$ be the locus of schemes whose general member is the union of a reduced point on $\Pi$ and a nonreduced point whose support is in ${\Pi}'$. Let $X_{4,8}$ denote the locus of collinear nonreduced schemes contained in $\Pi$. Lastly, let $X_{4,9}$ denote the locus of collinear nonreduced schemes that are collinear with $Q$.
\begin{align*}
  X_{4,7} &= \overline{\OO_{3,\col}} \cdot \wt{C} = {\OO_{3,\col}} \cdot (H^2 - \ell)\\
  X_{4,8} &= \frac{1}{3} \overline{\OO_{3,\col}} \cdot p\\
  X_{4,9} &= \frac{1}{3} \overline{\OO_{3,\col}} \cdot M
  \end{align*}
Then for surfaces, let $X_{2,7}$ be the locus of collinear nonreduced schemes contained in $\Pi$ and given by the union of $Q$ and a nonreduced length 2 scheme,  and let $X_{2,8}$ be the locus of collinear nonreduced schemes whose generic member is contained in $\Pi$ and consists of a nonreduced scheme supported at $P$ and another point on $\Pi$. Then let $X_{2,9}$ denote the locus of collinear nonreduced schemes contained in $L$. We also recall the definition of $X_{2,5}$, the locus of totally nonreduced schemes contained in $\Pi$ and supported at a point of $L$, and $X_{2,4}$, the locus of totally nonreduced schemes supported at a fixed point $Q$. See Figure \ref{fig:2folds-O3col} for diagrams depicting a general member of each locus.
\begin{figure}[h]
  \stepcounter{theorem}
  \begin{center}
  \begin{tikzpicture}[scale=.70,baseline=6ex]
    \filldraw[fill=white](0,1)--(1,2)--(3,1)--(2,0)--(0,1);
    \draw[dashed](.5,1.5)--(2.5,.5);
      \filldraw (1,1.25) circle (.13);
      \draw[-stealth](1.9,.8)--(1.5,1);
      \draw (2,.75) circle (.13);
    \node at (1.5,-.75) {$X_{2,7}$};
  \end{tikzpicture}\quad\quad\quad
  \begin{tikzpicture}[scale=.70,baseline=6ex]
    \filldraw[fill=white](0,1)--(1,2)--(3,1)--(2,0)--(0,1);
    \draw[dashed](.5,1.5)--(2.5,.5);
      \filldraw (1,1.25) circle (.13);
      \draw[-stealth](1,1.25)--(1.5,1);
      \draw (2,.75) circle (.13);
    \node at (1.5,-.75) {$X_{2,8}$};
    \end{tikzpicture} \quad\quad\quad
    \begin{tikzpicture}[scale=.70,baseline=6ex]
      \draw(.5,1.5)--(2.5,.5);
        \draw (1,1.25) circle (.13);
        \draw (2,.75) circle (.13);
        \draw[thick,-stealth](1.9,.8)--(1.5,1);
      \node at (1.5,-.75) {$X_{2,9}$};
    \end{tikzpicture}\quad\quad\quad
    \begin{tikzpicture}[scale=.70,baseline=6ex]
      \filldraw[fill=white](0,1)--(1,2)--(3,1)--(2,0)--(0,1);
      \draw(.5,1.5)--(2.5,.5);
        \draw (2,.75) circle (.13);
        \draw[ultra thick,dashed,-stealth](2,.75)--(1.75,1.35);
      \node at (1.5,-.75) {$X_{2,5}$};
    \end{tikzpicture}\quad \quad \quad
    \begin{tikzpicture}[scale=.70,baseline=6ex]
        \filldraw (1.5,1) circle (.13);
        \draw[ultra thick,dashed,-stealth](1.5,1)--(.75,1.6);
      \node at (1.5,-.75) {$X_{2,4}$};
    \end{tikzpicture}\end{center}
\caption{Pictures of a general member of $X_{2,7},X_{2,8},X_{2,9},X_{2,5},X_{2,4}$, the generators of $\Eff_2(\overline{\OO_{3,\col}})$. In the pictures for $X_{2,5},X_{2,4},$ a point and a very thick dashed line denotes a \textit{totally nonreduced} collinear scheme, whose direction may vary (possibly within a plane).}
\label{fig:2folds-O3col}
\end{figure}
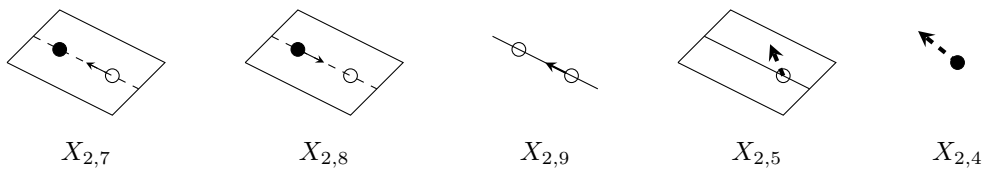
\begin{align*}
X_{2,7} &= 2(-\alpha + \beta - \gamma + 2\delta + \epsilon) \tag{See $S_7$ in \cite{RS21}}\\
X_{2,8} &= 2(-2\alpha + \beta - \gamma + 2\delta + \epsilon)\tag{See $S_9$ in \cite{RS21}}\\
X_{2,9} &= \frac{1}{9}\overline{\OO_{3,\col}} \cdot p^2 = 4 \alpha = 4 P_3 P_2 \ell 
\end{align*}

For threefolds: let $X_{3,7}$ be the locus of collinear nonreduced schemes contained in $\Pi$ whose generic member is a point on $L$ and a nonreduced point in $\Pi$, and let $X_{3,8}$ be the locus of collinear nonreduced schemes contained in $\Pi$ whose generic member is a nonreduced point supported at a point of $L$ and a reduced point. Let $X_{3,9}$ be the locus of collinear nonreduced schemes whose generic member is the union of a reduced point on $L$ and a nonreduced point supported at a point of $\Pi'$, and let $X_{3,10}$ be the locus of collinear nonreduced schemes whose generic member is the union of a nonreduced point supported at a point of $L$ and a reduced point on $\Pi'$. Let $X_{3,11}$ be the locus of collinear nonreduced schemes whose generic member is a nonreducted point supported at $P$ and a freely moving reduced point, and let $X_{3,12}$ be the locus of collinear nonreduced schemes whose generic member consists of $Q$ and a nonreduced point. Finally, let $X_{3,13}$ denote the locus of collinear nonreduced schemes contained in $\Pi$ and collinear with $Q$. Also, recall that $X_{3,5}$ is the locus of totally nonreduced collinear schemes contained in $\Pi$ and $X_{3,6}$ is the locus of totally nonreduced collinear schemes supported on $L$. 
\begin{align*}
X_{3,7} &= 2(2U-V-W+4X-3Y)\tag{see $T_4$ in \cite{RS21}}\\
X_{3,8} &= 2(U-W+4X-6Y+Z) \tag{see $T_5$ in \cite{RS21}}\\
X_{3,11} &= W_1\cdot \overline{\OO_{2,\col}} \tag{see Lemma \ref{lem:W1W2class} for $W_1$}\\
  &= W_1(P^2 - P_2)\\
  &= 2(U-Z + \nu_1 + \nu_3 - \tfrac{1}{3}\nu_4)  \\
X_{3,12} &= W_2 \cdot \overline{\OO_{2,\col}} \tag{see Lemma \ref{lem:W1W2class} for $W_2$}\\
&= W_2(P^2 - P_2)\\
&= 2(U + \nu_1 + \nu_3 - \tfrac{1}{3} \nu_4) \\
X_{3,13} &= \frac{1}{9} \overline{\OO_{3,\col}} \cdot pF\\
&= 2(-2U+2V+3Y+2Z)
\end{align*}
\begin{proposition} We have the following expressions for the nef and effective cones of $\overline{\OO_{3,\col}}$.
\begin{align*}
  \Nef^1(\overline{\OO_{3,\col}}) &= \langle X_{5,1}, X_{5,2}, X_{5,3} \rangle, \quad \Eff_1(\overline{\OO_{3,\col}}) = \langle X_{1,5}, X_{1,6}, X_{1,7}\rangle\\
  \Nef^2(\overline{\OO_{3,\col}}) &= \langle X_{4,5}, X_{4,6}, X_{4,7}, X_{4,8}, X_{4,9}\rangle, \quad \Eff_2(\overline{\OO_{3,\col}}) = \langle X_{2,7}, X_{2,8}, X_{2,9}, X_{2,5}, X_{2,4} \rangle\\
  \Nef^3(\overline{\OO_{3,\col}}) &= \langle X_{3,7},X_{3,8},X_{3,9},X_{3,10},X_{3,11},X_{3,12},X_{3,13}\rangle, \\
   \Eff_3(\overline{\OO_{3,\col}}) &= \langle X_{3,7}, X_{3,8}, X_{3,13}, X_{3,5}, X_{3,11},X_{3,12},X_{3,6}\rangle\\
  \Nef^5(\overline{\OO_{3,\col}}) &= \langle X_{1,5}, X_{1,6}, X_{1,8}, X_{1,9}\rangle,\quad \Eff_5(\overline{\OO_{3,\col}}) = \langle X_{5,1},X_{5,2},X_{5,3},X_{5,4}\rangle
\end{align*}
\end{proposition}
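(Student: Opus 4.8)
The plan is to treat $\overline{\OO_{3,\col}}$ as a smooth projective variety in its own right and to exploit two structural facts: it is a $\PP^1\times\PP^1$-bundle over $\mathbb{G}(1,3)$, and $\textup{PGL}_4(\C)$ acts on it with exactly two orbits, the dense orbit $\OO_{3,\col}$ and the codimension-one boundary $\overline{\OO_{4,\col}}=\OO_{4,\col}$. Each listed generator $X_{i,j}$ is by construction the class of an honest subvariety (the non-reduced scheme structures on a few of them only rescale the class by a positive constant), so the containments $\Eff_k\supseteq\langle\text{listed effective generators}\rangle$ hold at once; the content is the reverse containment together with the nef statements. I would organize everything around the duality $\Nef^k=(\overline{\Eff}_k)^\vee$ combined with the transversality criterion of Lemma \ref{lemma:kleiman-analog} applied with $X=\overline{\OO_{3,\col}}$.

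First I would establish that each listed nef generator is genuinely nef. Since $\OO_{3,\col}$ is the dense homogeneous orbit, the classical Kleiman transversality theorem guarantees that a general $\textup{PGL}_4$-translate of any effective cycle meets a representative of the generator in the expected dimension over $\OO_{3,\col}$; the only possible excess occurs along $\overline{\OO_{4,\col}}$. Thus Lemma \ref{lemma:kleiman-analog} (with $\ell=1$) reduces nefness to checking, generator by generator, that the intersection with $\OO_{4,\col}$ has dimension at most one more than expected and that the generator pairs non-negatively with every effective cycle supported on $\overline{\OO_{4,\col}}$. For the latter I would use the explicit description of $\Eff_\bullet(\overline{\OO_{4,\col}})$ from the preceding subsection and pair each candidate nef generator against the extremal effective cycles of $\overline{\OO_{4,\col}}$ in the complementary dimension (the rays $X_{1,3},X_{1,4},X_{2,4},X_{2,5},X_{2,6},X_{3,4},X_{3,5},X_{3,6},X_{4,3},X_{4,4}$), verifying non-negativity via the intersection tables and the RL/MS change of basis of Section \ref{Subsec:lifts-MS}.

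With nefness in hand I would close the argument by a polyhedral duality computation carried out uniformly for $k\in\{1,2,3,5\}$. Writing $\mathcal N_k$ and $\mathcal E_k$ for the cones spanned by the listed nef and effective generators in complementary dimensions, the chain $\mathcal N_k\subseteq\Nef^k=(\overline{\Eff}_k)^\vee\subseteq\mathcal E_k^\vee$ holds formally. It then remains to verify the single equality $\mathcal N_k=\mathcal E_k^\vee$: I would assemble the pairing matrix between the two generating sets using the extended MS intersection data (Proposition \ref{prop:(co)dim3-MS-basis-ints} and Tables \ref{table:(co)dim2-P3}, \ref{table:(co)dim3-P3}, \ref{table:(co)dim4-P3}) and check, ray by ray, that the extreme rays of $\mathcal E_k^\vee$ are exactly the listed nef generators. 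Since $\mathcal E_k$ is polyhedral and hence closed, biduality then yields $\overline{\Eff}_k=\mathcal E_k$ and $\Nef^k=\mathcal N_k$ simultaneously, and polyhedrality gives $\Eff_k=\overline{\Eff}_k$.

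The main obstacle I anticipate is the codimension-three (middle-dimensional) case, where $\Nef^3$ and $\Eff_3$ live in the same self-paired space $N_3$ yet are genuinely different cones: $X_{3,9},X_{3,10}$ appear only on the nef side and $X_{3,5},X_{3,6}$ only on the effective side. Here the Kleiman excess-dimension bookkeeping along $\overline{\OO_{4,\col}}$ is most delicate, since a single extra dimension of intersection must be controlled for each of the seven candidate nef rays, and the equality $\mathcal N_3=\mathcal E_3^\vee$ is precisely where the non-negativity of the boundary-orbit pairings becomes essential rather than formal. The divisor and curve cases $k=1,5$ should be comparatively routine, reducing to the inclusion $\Nef^1\subseteq\Eff_5$ in $N_5$ and to the $\PP^1\times\PP^1$-bundle geometry of the fibers over $\mathbb{G}(1,3)$.
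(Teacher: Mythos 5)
Your proposal is correct and follows essentially the same route as the paper: nefness of the listed generators via Lemma \ref{lemma:kleiman-analog} applied to $\overline{\OO_{3,\col}}$ with its two $\textup{PGL}_4$-orbits, and duality via the intersection matrices between the proposed generating sets in complementary dimension. The only difference is cosmetic: the paper notes that every proposed nef generator meets both suborbits in the expected dimension, so the excess-dimension bookkeeping and the boundary pairings against $\Eff_\bullet(\overline{\OO_{4,\col}})$ that you plan are vacuous, and the duality check is streamlined by rescaling the pairing matrices to a standard combinatorial pattern rather than computing the dual cones ray by ray.
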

\begin{proof}
For the first two lines, duality follows from the intersection matrix between the generators being diagonal. For $\Nef^5, \Eff_5$, the intersections, taken in $\overline{\OO_{3,\col}}$, between the proposed rays are
\[
\begin{tabular}{c|cccc}
 & $X_{1,5}$ & $X_{1,6}$ & $X_{1,8}$ & $X_{1,9}$\\
 \hline
 $X_{5,1}$ &0& $\ast$& $\ast$ & 0 \\
 $X_{5,2}$ & $\ast$ & 0& 0& $\ast$\\
 $X_{5,3}$ & 0& 0& $\ast$& $\ast$\\
 $X_{5,4}$ & $\ast$ & $\ast$ & 0 & 0\\
\end{tabular}
\]
All the $\ast$ are positive numbers. After scaling the columns and rows in a certain order, we may assume all the nonzero entries are 1. Then relabeling the rows as $e_1,e_2,e_3,e_1+e_2-e_3$, the duality of these cones is equivalent to the duality of $\langle e_1,e_2,e_3,e_1+e_2-e_3\rangle$ and $\langle e_1^*,e_2^*,e_1^* + e_3^*, e_2^*+e_3^* \rangle$, which is clear.


For $\Nef^3,\Eff_3$, we have the following intersection matrix, where we consider the intersections as subschemes in $\overline{\OO_{3,\col}}$.
\[
\begin{tabular}{c|ccccccc}
 & $X_{3,7}$ & $X_{3,8}$ & $X_{3,10}$ & $X_{3,9}$ & $X_{3,11}$ & $X_{3,12}$ & $X_{3,13}$\\
 \hline
 $X_{3,7}$ &0& $\ast$& $\ast$ & 0 & 0 & 0 & 0\\
 $X_{3,8}$ & $\ast$ & 0& 0& $\ast$ & 0 & 0 & 0\\
 $X_{3,13}$ & 0& 0& $\ast$& $\ast$ & 0 & 0 & 0\\
 $X_{3,5}$ & $\ast$ & $\ast$ & 0 & 0 & 0 & 0 & 0\\
 $X_{3,11}$ & 0 & 0 & 0 & 0 & 0 & $\ast$ & 0\\
 $X_{3,12}$ & 0 & 0 & 0 & 0 & $\ast$ & 0 & 0\\
 $X_{3,6}$ & 0 & 0 & 0 & 0 & 0 & 0 & $\ast$\\
\end{tabular}
\]
Once again, after scaling the rows in the columns in the correct order, we may assume all the nonzero intersections are 1. Then a similar argument to the $\Nef^5,\Eff_5$ case yields that the two cones are dual.

Nefness of the proposed nef classes follows from all generators intersecting the suborbits in the correct dimension.
\end{proof}
\subsection{Schemes supported at a single point}\label{subsec:O2nr} We now consider $\overline{\OO_{2,\nr}}$, which parametrizes subschemes supported at a single point (both curvilinear and planar fat points). 

Note that $\overline{\OO_{2,\nr}} \cdot \mu = 0$. Hence $\overline{\OO_{2,\nr}}$ is determined by its restriction to $\Hilb_3 \PP^2$. Then since the locus of totally nonreduced schemes in $\Hilb_3 \PP^2$ has class
\[\overline{\OO_{2,\nr,\PP^2}} = 3(A-B+C-D)\]
by Section 5.3 in \cite{RS21}, we have
\begin{align*}
\overline{\OO_{2,\nr}} &= 3(\widetilde{A}-\widetilde{B}+\widetilde{C}-\widetilde{D})\\
&= 3(P_2 - PH + \ell + p). \tag{Prop \ref{prop:P3-conversion}}
\end{align*}

Since all the classes we consider in this section intersect this orbit in the correct dimension, we do not need to calculate the nef and effective cones of $\overline{\OO_{2,\nr}}$. 

However, for completeness, we give the ranks of the Chow groups. Fix a point $Q$. Consider the following blowup of the punctual Hilbert scheme $\Hilb_3^0 \PP^3$ along the collinear locus.
\[
\begin{tikzcd}
  \wt{\Al^{0}_3} \PP^3 \arrow[hookrightarrow,r,""] \arrow[d,""]& \wt{\Hilb^{0}_3} \PP^3 \arrow[d,""]\\
  \Al^{0}_3 \PP^3 \arrow[hookrightarrow,r,""]& \Hilb^{0}_3 \PP^3
\end{tikzcd}\]
where 
\begin{align*}
\Hilb^{0}_3 \PP^3 &= \{\Gamma \in \Hilb_3 \PP^3 : \Gamma \textup{ is supported at $Q$}\}\\
\wt{\Hilb^0_3}\PP^3 &= \{(\Gamma, \Lambda) \in \Hilb_3^0 \PP^3 \times \PP^2 : \Gamma \subseteq \Lambda\}\\
\Al^{0}_3 \PP^3 &= \{\Gamma \in \Hilb_3 \PP^3: \Gamma \textup{ supported at $Q$ and contained in a line}\}\\
\wt{\Al^{0}_3} \PP^3 &= \{(\Gamma,\Lambda) \in \Hilb^0_3 \PP^3 \times \PP^2 : \Gamma \subseteq \Lambda, \Gamma \in \Al^0_3 \PP^3\}.
\end{align*}
Here, the $\PP^2$ in the definition of $\wt{\Hilb^0_3}\PP^3, \wt{\Al^{0}_3} \PP^3$ parametrizes planes in $\PP^3$ containing $Q$. $\wt{\Hilb^0_3}\PP^3$ is smooth and has a projection to $\PP^2$ where the fibers are copies of the punctual Hilbert scheme $\Hilb^0_3 \PP^2 = \{\Gamma \in \Hilb_3 \PP^2 : \Gamma \textup{ is supported at $Q$}\}$. Observe that, since $\Hilb^0_3 \PP^2$ is isomorphic to a singular quadric surface, all its Chow groups have rank 1. Next we compute that
\[
\begin{tabular}{ c|l } 
$k$ & $\dim A_k(\wt{\Hilb^0_3} \PP^3)$\\
\hline
4& 1\\
3& 2\\
2& 3\\
1& 2\\
0& 1
\end{tabular} \ \ 
\begin{tabular}{ c|l } 
$k$ & $\dim A_k(\wt{\Al^0_3} \PP^3)$\\
\hline
3& 1\\
2& 2\\
1& 2\\
0& 1
\end{tabular}\]
and that we lose one rank from the middle Chow groups of $A_k(\wt{\Hilb^0_3} \PP^2)$ when we blow down. Hence:
\[
\begin{tabular}{ c|l } 
$k$ & $\dim A_k(\Hilb^0_3 \PP^3)$\\
\hline
4& 1\\
3& 1\\
2& 2\\
1& 1\\
0& 1
\end{tabular}
\]
Now, $\overline{\OO_{2,\nr}}$ can be viewed as a $\Hilb^0_3 \PP^3$-bundle over $\PP^3$, yielding the final counts for the ranks of the Chow groups.
\[\begin{tabular}{c|cccccccc}
$k$ & 0 & 1 & 2 & 3 & 4 & 5 & 6 & 7\\
\hline
 $\dim A_k(\overline{\OO_{2,\nr}})$ & 1 & 2 & 4 & 5 & 5 & 4 & 2 & 1
\end{tabular}\]



\subsection{Collinear schemes} The locus of collinear schemes $\overline{\OO_{2,\col}}$ is given equal to $P^2 - P_2$ by Equation \eqref{P^2}. 

As for the nef and effective cones of this orbit closure, note that $\overline{\OO_{2,\col}}$ has the structure of a $\textup{Sym}^3\PP^1 \cong \PP^3$ bundle over $\mathbb{G}(1,3)$. Hence the dimensions of the Chow groups of $\overline{\OO_{2,\col}}$ are
\begin{center}
  \begin{tabular}{ c|l } 
   $k$ & $\dim A_k(\overline{\OO_{2,\col}})$   \\ 
   \hline
   7 & 1 \\ 
   6 & 2  \\ 
   5 & 4 \\ 
   4 & 5\\ 
   3 & 5 \\ 
   2 & 4 \\ 
   1 & 2 \\ 
   0 & 1\\
  \end{tabular}
  \end{center}

Once again let $Q \subseteq L \subseteq \Pi$ denote a point, line,and plane in $\PP^3$ respectively. Let ${\Pi}'$, $\Pi''$ denote additional planes in $\PP^3$. Let $X_{6,1}$ denote the locus of collinear schemes incident to $\Pi$, and let $X_{6,2}$ denote the locus of collnear schemes coplanar with $L$. For new curve classes, let $X_{1,10}$ denote the locus of collinear schemes contained in $L$ and containing two fixed points.
\begin{align*}
X_{6,1} &= \overline{\OO_{2,\col}} \cdot H\\
X_{6,2} &= \frac{1}{3} \overline{\OO_{2,\col}} \cdot F\\
X_{1,10} &= \frac{1}{9} \overline{\OO_{2,\col}} \cdot H^2 p^2 = i_{*}(\alpha)H = H P_3 P_2 \ell
\end{align*}
For fivefolds, let $X_{5,5}$ be the locus of collinear schemes contained in $\Pi$, let $X_{5,6}$ be the locus of collinear schemes whose generic member contains a point in $\Pi \setminus {\Pi}'$ and a point in ${\Pi}' \setminus \Pi$, and let $X_{5,7}$ be the locus of collinear schemes incident to a line. Furthermore, let $X_{5,8}$ denote the locus of schemes collinear with a fixed point $P$. 
\begin{align*}
X_{5,5} &= \frac{1}{3} \overline{\OO_{2,\col}} \cdot p\\
X_{5,6} &= \overline{\OO_{2,\col}} \cdot \wt{C} = \overline{\OO_{2,\col}} (H^2 - \ell)\\
X_{5,7} &= \overline{\OO_{2,\col}} \cdot \ell\\
X_{5,8} &= \frac{1}{3} \overline{\OO_{2,\col}} \cdot M
\end{align*}
For surfaces, there is only one new introduced locus. Let $X_{2,10}$ be the locus of of collinear schemes contained in $L$ and containing $Q$.
\begin{align*}
X_{2,10} &= \frac{1}{9} \overline{\OO_{2,\col}} \cdot Hp^2 = \alpha = P_3 \ell P_2\
\end{align*}
For fourfolds, let $X_{4,10}$ be the locus of collinear schemes contained in $\Pi$ and incident to $L$. Let $X_{4,11}$ be the locus of collinear schemes whose generic member contains a point in $\Pi' \setminus L$ and a point in $L \setminus \Pi'$. Let $X_{4,12}$ be the locus of collinear schemes whose generic member is the union of a point on each of $\Pi, \Pi',$ and $\Pi''$. Let $X_{4,13}$ be the locus of collinear schemes contained in $\Pi$ and collinear with $Q$, and let $X_{4,14}$ be the locus of collinear schemes that contain $Q$.

For threefolds, we have two new classes: let $X_{3,14}$ denote the locus of collinear schemes contained in $\Pi$ and containing $Q$, and let $X_{3,15}$ denote the locus of collinear schemes contained in the fixed line $L$. 
\begin{align*}
  X_{3,14} &= \frac{1}{3}\overline{\OO_{2,\col}}\cdot p \ell \\
  &= -U+V+Z\\
  X_{3,15} &= Y\\
\end{align*}

\begin{proposition}We have the following expressions for the nef and effective cones of $\overline{\OO_{2,\col}}$.
\begin{align*}
\Nef^1 &= \langle X_{6,1}, X_{6,1} \rangle ,\quad \Eff_1 = \langle X_{1,10}, X_{1,4}\rangle \\
\Nef^2 &= \langle X_{5,5}, X_{5,6}, X_{5,7}, X_{5,8}\rangle, \quad \Eff_2 = \langle X_{2,5}, X_{2,10}, X_{2,8}, X_{2,4}\rangle \\
\Nef^3 &= \langle X_{4,10},X_{4,11},X_{4,12},X_{4,13},X_{4,14}\rangle,\quad \Eff_3 = \langle X_{3,5},X_{3,14},X_{3,15},X_{3,6},X_{3,11}\rangle 
\end{align*}
\end{proposition}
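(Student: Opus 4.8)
The plan is to exploit the bundle structure together with the $\textup{PGL}_4(\C)$-orbit stratification of $\overline{\OO_{2,\col}}$. Recall that $\overline{\OO_{2,\col}}$ is a $\textup{Sym}^3\PP^1 \cong \PP^3$-bundle over $\mathbb{G}(1,3)$, which pins down the ranks of the Chow groups recorded above, and that the only orbits it contains are $\OO_{2,\col}$, $\OO_{3,\col}$, and $\OO_{4,\col}$, giving the flag $\overline{\OO_{2,\col}} \supseteq \overline{\OO_{3,\col}} \supseteq \overline{\OO_{4,\col}}$ whose effective cones we have already determined in the preceding subsections. Every class $X_{i,j}$ appearing among the proposed generators is, up to a positive scalar, either an explicit subvariety or $\overline{\OO_{2,\col}}$ intersected with an effective class (a power of $H$, $F$, $p$, $\ell$, or $M$); hence all the proposed generators are effective, and it remains to prove (i) that the proposed nef and effective cones are dual in each complementary pair of (co)dimensions, and (ii) that the proposed nef generators are genuinely nef.

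For the duality I would compute the intersection pairing between the listed nef and effective generators in each of the three complementary pairs $(\Nef^1,\Eff_1)$, $(\Nef^2,\Eff_2)$, $(\Nef^3,\Eff_3)$, taking all intersections inside $\overline{\OO_{2,\col}}$. As in the treatment of $\overline{\OO_{3,\col}}$, I expect each resulting matrix to be sparse, with essentially a single nonzero entry per row after a suitable reordering, or else reducible---after rescaling rows and columns by positive constants---to the standard configuration whose duality is transparent, for instance $\langle e_1,e_2,e_3,e_1+e_2-e_3\rangle$ paired against $\langle e_1^*,e_2^*,e_1^*+e_3^*,e_2^*+e_3^*\rangle$. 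Exhibiting this shape shows at once that the cones are full-dimensional and mutually dual, so that each is the dual cone of the other.

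For nefness I would apply Lemma \ref{lemma:kleiman-analog} with $X=\overline{\OO_{2,\col}}$ and the three orbits $\OO_{2,\col}$, $\OO_{3,\col}$, $\OO_{4,\col}$. For each proposed nef generator $Y$ I would verify that a sufficiently general $\textup{PGL}_4(\C)$-translate meets each orbit either in the expected dimension or in dimension one higher, and, in the latter case, that $Y$ pairs non-negatively with every effective cycle supported on the relevant orbit closure. Here the effective cones of $\overline{\OO_{3,\col}}$ and $\overline{\OO_{4,\col}}$ computed earlier are exactly the input required, and the non-negativity reduces to checking the signs of finitely many intersection numbers, most of which are already encoded in the pairing matrices of the previous step.

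The main obstacle is the bookkeeping in this last step: identifying, for each of the (up to five) proposed nef generators in a given codimension, precisely which suborbits are met in excess dimension, and then confirming the non-negativity hypothesis of Lemma \ref{lemma:kleiman-analog} against the generators of $\Eff(\overline{\OO_{3,\col}})$ and $\Eff(\overline{\OO_{4,\col}})$. Because $\overline{\OO_{2,\col}}$ is seven-dimensional with a three-step orbit stratification, these dimension counts are more delicate than in the lower-dimensional orbit closures treated earlier, and one must be careful to use a genuinely general translate so that the transversality hypotheses hold on the open orbit while the excess-dimension contributions along $\overline{\OO_{3,\col}}$ and $\overline{\OO_{4,\col}}$ are controlled.
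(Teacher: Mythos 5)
Your proposal is correct and follows essentially the same route as the paper: effectiveness of the generators by construction, duality via the intersection pairing inside $\overline{\OO_{2,\col}}$, and nefness via Lemma \ref{lemma:kleiman-analog} applied to the $\textup{PGL}_4(\C)$-orbit stratification. The paper's proof is just the degenerate (easy) case of what you prepare for: the pairing matrices of the listed generators are already diagonal (no rescaling or non-diagonal configuration as in the $\overline{\OO_{3,\col}}$ case is needed), and every proposed nef generator meets all three suborbits in the expected dimension, so the excess-dimension non-negativity check in Lemma \ref{lemma:kleiman-analog} never has to be invoked.
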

\begin{proof}
Duality follows from the generators yielding a diagonal intersection matrix, and the nefness of the proposed generators follows from the fact they intersect all the suborbits in the appropriate dimension.
\end{proof}

\subsection{Non-reduced schemes} The class of the locus of nonreduced schemes $\overline{\OO_{1}}$ can be computed using the push-pull formula to reduce it to a computation in $\Hilb_3\PP^2$. We have that 
\begin{align*}
\overline{\OO_{1}} \cdot P_3 H \ell p &= 2\\ 
\overline{\OO_{1}} \cdot P_3 H \ell^2 &= 0
\end{align*}
hence $\overline{\OO_1} = 2(2H-F) = 2(H-P)$. Since all relevant schemes intersect this orbit in the correct dimension, we do not calculate the effective and nef cones of this orbit closure.
\section{Nef and effective cones in (co)dimension 2,3}\label{section:nef-eff-Hilb3P3}
\begin{theorem}\label{theorem:MAIN(co)dim2-eff-neff} We have the following expression for the nef and effective cones in (co)dimension two for $\Hilb_3 \PP^3$.
\begin{align*}
\Eff_2(\Hilb_3 \PP^3) &= \langle -\alpha + \delta,\ \alpha,\ -\alpha + \beta -\delta,\ \epsilon,\ \mu,\ \alpha - \beta + \gamma -\delta - 2\mu,\ -2\alpha + \beta - \gamma + 2\delta + \epsilon\rangle\\
\Nef^2(\Hilb_3 \PP^3) &= \langle \wt{B},\ \wt{C},\ \wt{D},\ \wt{E},\ \wt{A}+\wt{B},\ \wt{A}+\wt{E},\ 2\wt{A}+2\wt{B}+M,\ 2\wt{A}+2\wt{E}+M\rangle.
\end{align*}
\end{theorem}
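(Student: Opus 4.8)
The plan is to establish both cones simultaneously through a duality argument, as suggested in Section~\ref{subsec:techniques-for-eff-nef}. Write $\mathcal{C}\subseteq A_2(\Hilb_3\PP^3)$ for the cone on the seven proposed effective classes and $\mathcal{N}\subseteq A_7(\Hilb_3\PP^3)$ for the cone on the eight proposed nef classes. I would prove three things: (1) each of the seven generators is effective, so $\mathcal{C}\subseteq\Eff_2$; (2) the dual cone satisfies $\mathcal{C}^\vee=\mathcal{N}$; and (3) each of the eight classes is genuinely nef, so $\mathcal{N}\subseteq\Nef^2$. Because numerical and rational equivalence coincide on $\Hilb_3\PP^3$ and the intersection pairing is perfect (Proposition~\ref{prop:chow-hom}), while $\Eff_2$ is closed with $\Nef^2=\Eff_2^\vee$, these combine formally: dualizing (1) gives $\Nef^2=\Eff_2^\vee\subseteq\mathcal{C}^\vee=\mathcal{N}$ by (2), which together with (3) forces $\Nef^2=\mathcal{N}$; dualizing once more yields $\Eff_2=\Eff_2^{\vee\vee}=\mathcal{N}^\vee=\mathcal{C}^{\vee\vee}=\mathcal{C}$.

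For step (1), I would exhibit each generator as a positive multiple of a cycle already produced in Section~\ref{section:orbits} or as a pushforward from $\Hilb_3\PP^2$. Directly from the orbit computations, $\alpha=X_{2,10}$, $\alpha-\beta+\gamma-\delta-2\mu=\tfrac13 X_{2,3}$ (planar fat points supported at a point), and $-2\alpha+\beta-\gamma+2\delta+\epsilon=\tfrac12 X_{2,8}$ are effective, while $\mu$ is effective by its very definition. The remaining three, $-\alpha+\delta$, $-\alpha+\beta-\delta$, and $\epsilon$, are among the generators of $\Eff_2(\Hilb_3\PP^2)$ (Theorem~\ref{theorem:(co)dim2P2}); since the inclusion $i\colon\Hilb_3\PP^2\hookrightarrow\Hilb_3\PP^3$ carries each MS class to the identically named MS class and pushes effective cycles to effective cycles, they remain effective here. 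This gives $\mathcal{C}\subseteq\Eff_2$.

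For step (2) I would carry out the cone duality explicitly against the intersection pairing between the extended MS bases of $A_2$ and $A_7$ recorded in Section~\ref{Subsec:lifts-MS} (equivalently, Table~\ref{table:(co)dim2-P3} together with the change of basis of Proposition~\ref{prop:P3-conversion}). This is a finite computation, best left to the accompanying code, and I expect it to return precisely the eight rays $\wt B,\wt C,\wt D,\wt E,\wt A+\wt B,\wt A+\wt E,2\wt A+2\wt B+M,2\wt A+2\wt E+M$, so that $\mathcal{C}^\vee=\mathcal{N}$. A useful by-product is that each of the eight classes then pairs non-negatively with every generator of $\mathcal{C}$, and hence with every $X_{2,i}$ of Section~\ref{section:orbits}, since one checks directly that all of these lie in $\mathcal{C}$.

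The \emph{main obstacle} is step (3): upgrading ``pairs non-negatively with the chosen effective generators'' to genuine nefness. For this I would apply Lemma~\ref{lemma:kleiman-analog} to each of the eight classes. After fixing an effective geometric representative, I would compute the dimension of its intersection with each of the seven $\textup{PGL}_4(\C)$-orbits and isolate those orbits met in one dimension above the expected. On the closure $\overline{\OO}$ of any such orbit, nefness requires non-negative pairing with every class of $\Eff_2(\overline{\OO})$; but the generators of each such $\Eff_2(\overline{\OO})$ were computed in Section~\ref{section:orbits} and lie in $\mathcal{C}$, so the needed non-negativity is exactly the by-product of step (2). Thus the genuine labor is the transversality and excess-dimension bookkeeping across all seven orbits; once it is done, Lemma~\ref{lemma:kleiman-analog} certifies that all eight classes are nef, and the formal argument of the first paragraph closes out both cones.
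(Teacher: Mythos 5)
Your proposal is correct and follows essentially the same route as the paper's proof: establish the duality $\mathcal{C}^\vee=\mathcal{N}$ by explicit computation in the extended MS pairing, cite the orbit computations of Section~\ref{section:orbits} (and pushforwards from $\Hilb_3\PP^2$, equivalently Lemma~\ref{lemma:aux-classes}) for effectiveness, and apply Lemma~\ref{lemma:kleiman-analog} with the excess-dimension bookkeeping over the $\textup{PGL}_4(\C)$-orbits for nefness. The only differences are expository: you spell out the formal dualization that closes the argument, and you note that the non-negativity needed in the Kleiman step follows automatically because each orbit's $\Eff_2$ generators lie in $\mathcal{C}$, whereas the paper verifies those pairings directly.
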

\begin{proof}
We first show that these two cones are dual. Set:
\begin{align*}
e_1 &= \alpha - \beta + \gamma - \delta\\
e_2 &= - \alpha + \delta\\
e_3 &= \alpha\\
e_4 &= -\alpha + \beta - \delta\\
e_5 &= \epsilon\\
e_6 &= \mu
\end{align*}
so that $e_1^* = \wt{A}, e_2^* = \wt{B}, e_3^* = \wt{C}, e_4^* = \wt{D}, e_5^* = \wt{E}, e_6^* = M$. Under this basis, our proposed effective note is $C = \langle e_2,e_3,e_4,e_5,e_6,e_1-2e_6,-e_1+e_2+e_5\rangle$. Then 
\begin{align*}
C^{\vee} &= \langle e_2^*, e_3^*, e_4^*, e_5^*, e_6^*, e_1^* + e_2^*, e_1^* + e_5^*, 2e_1^* + 2e_2^* + e_6^*, 2e_1^* + 2e_5^* + e_6^*\rangle \\
&= \langle \wt{B},\ \wt{C},\ \wt{D},\ \wt{E},\ \wt{A}+\wt{B},\ \wt{A}+\wt{E},\ 2\wt{A}+2\wt{B}+M,\ 2\wt{A}+2\wt{E}+M\rangle.
\end{align*}

All the proposed effective classes are indeed effective since we've given effective representations of them in Section \ref{section:orbits} and Lemma \ref{lemma:aux-classes}.

For nefness: note that $\wt{A}$ intersects $\overline{\OO_{2,\col}}, \overline{\OO_{3,\col}}, \overline{\OO_{4,\col}}$ in one higher dimension than expected, and that $\wt{B}, \wt{D},$ and $M$ intersects $\overline{\OO_{4,\max}}$ in one higher dimension than expected. All other intersections between $\wt{A},\wt{B},\wt{C},\wt{D},\wt{E},M$ and the orbit closures are of expected dimension. Then note that  $\wt{A}, \wt{A} + \wt{E}, \wt{A} + \wt{B}, 2\wt{A} + 2\wt{B} + M$, $2\wt{A} + 2\wt{E} + M$,  intersect all the effective surface classes in $\overline{\OO_{2,\col}}, \overline{\OO_{3,\col}}, \overline{\OO_{4,\col}}, \overline{\OO_{4,\max}}$ non-negatively. Furthermore, $\wt{B}, \wt{D}, M$ intersect all the effective surface classes in $\overline{\OO_{4,\max}}$ non-negatively. Therefore, the proposed nef classes are nef by Lemma \ref{lemma:kleiman-analog}.
\end{proof}
\begin{remark} For ease of reading and referencing, we write the rays of each cone in matrix form. The columns of $M_{2,\MS,\Eff}$ give the rays of the effective cone $\Eff_2(\Hilb_3 \PP^3)$ in terms of the extended MS basis, and the columns of $M_{2,\MS,\Nef}$ likewise give the rays of $\Nef^2(\Hilb_3 \PP^3)$ in terms of the extended MS basis.
\begin{align*}
M_{2,\MS,\Eff} &= \begin{bmatrix} 
-1 & 1 & -1 & 0 & 0 &  1 & -2 \\
 0 & 0 &  1 & 0 & 0 & -1 &  1 \\
 0 & 0 &  0 & 0 & 0 &  1 & -1 \\
 1 & 0 & -1 & 0 & 0 & -1 &  2 \\
 0 & 0 &  0 & 1 & 0 &  0 &  1 \\
 0 & 0 &  0 & 0 & 1 & -2 &  0
\end{bmatrix}\\
M_{2,\MS,\Nef} &= \begin{bmatrix} 
0 & 0 & 0 & 0 & 1 & 1 & 2 & 2 \\
1 & 0 & 0 & 0 & 1 & 0 & 2 & 0 \\
0 & 1 & 0 & 0 & 0 & 0 & 0 & 0 \\
0 & 0 & 1 & 0 & 0 & 0 & 0 & 0 \\
0 & 0 & 0 & 1 & 0 & 1 & 0 & 2 \\
0 & 0 & 0 & 0 & 0 & 0 & 1 & 1 
\end{bmatrix}
\end{align*}
\end{remark}
Figure \ref{fig:eff2-figures} provides a picture of a general $\Gamma$ in the effective representatives of the rays of $\Eff_2(\Hilb_3 \PP^3)$.
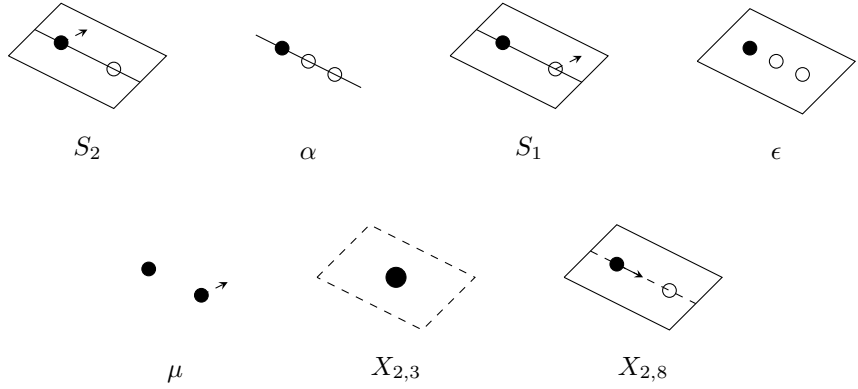
\begin{figure}[h]
  \stepcounter{theorem}
  \begin{center}
    \begin{tikzpicture}[scale=.70]
      \filldraw[fill=white](0,1)--(1,2)--(3,1)--(2,0)--(0,1);
        \draw(.5,1.5)--(2.5,.5);
        \filldraw (1,1.25) circle (.13);
        \draw[dashed,-stealth](1,1.25)--(1.5,1.5);
        \draw (2,.75) circle (.13);
        \node at (1.5,-.75) {$S_2$};
    \end{tikzpicture} \quad \quad \quad
    \begin{tikzpicture}[scale=.70]
      \draw(.5,1.5)--(2.5,.5);
      \filldraw (1,1.25) circle (.13);
      \draw (1.5,1) circle (.13);
      \draw (2,.75) circle (.13);
      \node at (1.5,-.75) {$\alpha$};
    \end{tikzpicture} \quad \quad \quad 
    \begin{tikzpicture}[scale=.70]
      \filldraw[fill=white](0,1)--(1,2)--(3,1)--(2,0)--(0,1);
        \draw(.5,1.5)--(2.5,.5);
        \filldraw (1,1.25) circle (.13);
        \draw (2,.75) circle (.13);
        \draw[dashed,-stealth](2,.75)--(2.5,1);
        \node at (1.5,-.75) {$S_1$};
        \end{tikzpicture} \quad \quad \quad
         \begin{tikzpicture}[scale=.70]
          \filldraw[fill=white](0,1)--(1,2)--(3,1)--(2,0)--(0,1);
          \filldraw (1,1.25) circle (.13);
          \draw (1.5,1) circle (.13);
          \draw (2,.75) circle (.13);
          \node at (1.5,-.75) {$\epsilon$};
        \end{tikzpicture}\[\]
        \begin{tikzpicture}[scale=.70]
          \filldraw (1,1.25) circle (.13);
          \filldraw (2,.75) circle (.13);
          \draw[dashed,-stealth](2,.75)--(2.5,1);
          \node at (1.5,-.75) {$\mu$};
        \end{tikzpicture} \quad \quad \quad
        \begin{tikzpicture}[scale=.70]
          \filldraw[dashed,fill=white](0,1)--(1,2)--(3,1)--(2,0)--(0,1);
          \filldraw (1.5,1) circle (.13);
          \filldraw[opacity=.2] (1.5,1) circle (.19);
        
          \node at (1.5,-.75) {$X_{2,3}$};
        \end{tikzpicture} \quad \quad \quad 
        \begin{tikzpicture}[scale=.70]
          \filldraw[fill=white](0,1)--(1,2)--(3,1)--(2,0)--(0,1);
            \draw[dashed](.5,1.5)--(2.5,.5);
            \filldraw (1,1.25) circle (.13);
            \draw[-stealth](1,1.25)--(1.5,1);
            \draw (2,.75) circle (.13);
            \node at (1.5,-.75) {$X_{2,8}$};
          \end{tikzpicture} 
  \end{center}
\caption{Effective representatives for (integer multiples of) the extremal rays of $\Eff_2(\Hilb_3 \PP^3)$.}
\label{fig:eff2-figures}
\end{figure}

\begin{theorem}\label{theorem:MAIN(co)dim3-eff-neff}
We have the following expression for the nef and effective cones in (co)dimension three for $\Hilb_3 \PP^3$.
\begin{align*}
\Eff_3(\Hilb_3 \PP^3) &= \langle -U+V+Z,\ U-Z,\ Y, \ X-3Y, \ W-3X+3Y, \ U-V, \nu_1,\nu_2,\nu_3,\nu_4,\\
&\quad \quad 3U-2V-W+4X-6Y-Z, \ W - 3X + 3Y -2\nu_1 - 2\nu_2 + \tfrac{1}{9}\nu_4, \\
& \quad \quad U-Z + \nu_1 + \nu_3 - \tfrac{1}{3}\nu_4\rangle \\
\Nef^3(\Hilb_3 \PP^3) &= \langle \wt{U}, \ \wt{V}, \ \wt{W},\ \wt{X},\ \wt{V} + \wt{Y},\ \wt{X} + \wt{Y},\ \wt{Z}, \\
& \quad 2\wt{Y} + \wt{Z},\ 2\wt{V} + 2\wt{Y} + N_1,\ 2\wt{X} + 2\wt{Y} + N_1,\ 2\wt{Y} + \wt{Z}+N_1,\\
& \quad 2\wt{V} + 2\wt{Y} + N_2,\ 2\wt{X} + 2\wt{Y} + N_2,\ 2\wt{Y}+\wt{Z} + N_2, \ N_3,\ \\
& \quad \wt{V} + \tfrac{1}{3}N_4, \ \wt{V} + \wt{Y} + \tfrac{1}{3}N_4,\ N_3 + \tfrac{1}{3}N_4, \ 5\wt{X}+5\wt{Y} + 3N_1 + N_4, 3\wt{V}+3\wt{Y}+2N_2+N_4\\
&\quad 5\wt{V}+N_1+2N_4, \ 10\wt{Y}+5\wt{Z}+6N_1+2N_4, \ 6\wt{V} + N_2 + 2N_4, \ N_1 + 5N_3 + 2N_4 \\
& \quad N_2 + 6N_3 + 2N_4, \ 5\wt{V} + 5\wt{Y} + 4N_1+3N_4\rangle.
\end{align*}
\end{theorem}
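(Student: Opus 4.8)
The plan is to follow the same three-part template as the proof of Theorem \ref{theorem:MAIN(co)dim2-eff-neff}: first exhibit the two cones as dual to one another, then verify that the $13$ proposed generators of $\Eff_3$ are genuinely effective, and finally verify that the $26$ proposed generators of $\Nef^3$ are genuinely nef via Lemma \ref{lemma:kleiman-analog}. Once these are in place the conclusion is formal: writing $C_{\mathrm{eff}}$ and $C_{\mathrm{nef}}$ for the two proposed (polyhedral, hence closed) cones, effectivity of the generators of $C_{\mathrm{eff}}$ gives $C_{\mathrm{eff}} \subseteq \overline{\Eff}_3$, whence $\Nef^3 = \overline{\Eff}_3^{\vee} \subseteq C_{\mathrm{eff}}^{\vee}$; nefness of the generators of $C_{\mathrm{nef}}$ gives $C_{\mathrm{nef}} \subseteq \Nef^3$; and if one has established $C_{\mathrm{nef}} = C_{\mathrm{eff}}^{\vee}$, dualizing once more recovers $\overline{\Eff}_3 = C_{\mathrm{eff}}$, so both equalities hold.

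For the duality I would use the intersection pairing of Proposition \ref{prop:(co)dim3-MS-basis-ints}, which in the extended MS bases is block diagonal: the $\{U,\dots,Z\}\times\{\wt U,\dots,\wt Z\}$ block is exactly the $\Hilb_3\PP^2$ matrix of Table \ref{table:co-dim3P2}, while the $\{\nu_i\}\times\{N_j\}$ block is $\mathrm{diag}(1,1,1,9)$. Thus, as in the codimension $2$ argument, I would choose a basis $e_1,\dots,e_{10}$ of $A_3(\Hilb_3\PP^3)$ whose dual basis is precisely $\wt U,\dots,\wt Z,N_1,\dots,N_4$: on the lifted block the $e_i$ are the same combinations produced by Ryan--Stathis for $\Nef^3(\Hilb_3\PP^2)$ (Theorem \ref{theorem:(co)dim2P2}), and the duals of $N_1,N_2,N_3,N_4$ are $\nu_1,\nu_2,\nu_3,\tfrac19\nu_4$. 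In these coordinates the lifted generators (numbers $1$--$6$, $11$ on the effective side; the tilde'd Ryan--Stathis octet $\wt U,\wt V,\wt W,\wt X,\wt V+\wt Y,\wt X+\wt Y,\wt Z,2\wt Y+\wt Z$ on the nef side) reproduce the $\Hilb_3\PP^2$ duality, and $\nu_1,\nu_2,\nu_3$ pair trivially. Only the two \emph{mixed} effective classes (generators $12$ and $13$, namely $\tfrac13 X_{3,1}$ and $\tfrac12 X_{3,11}$ from Section \ref{section:orbits}) together with $\nu_4$ couple the two blocks, so the dual cone does not split as a product. I therefore expect to certify $C_{\mathrm{nef}} = C_{\mathrm{eff}}^{\vee}$ by an explicit vertex-enumeration (double-description) computation — a $13$-ray cone in $\R^{10}$ whose dual has the $26$ claimed rays — carried out with the accompanying code.

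Effectivity of the $13$ generators of $C_{\mathrm{eff}}$ is already in hand, since each is a positive multiple of an effective $3$-cycle built in Section \ref{section:orbits}: generators $1$--$6$, $11$ are the seven effective generators lifted from $\Eff_3(\Hilb_3\PP^2)$, the $\nu_i$ are the new extended-basis loci, and $12$, $13$ are $\tfrac13 X_{3,1}$ and $\tfrac12 X_{3,11}$. The remaining, most delicate step is nefness of the $26$ generators of $C_{\mathrm{nef}}$, and this is where the extended MS basis pays off: every proposed nef class is a \emph{non-negative} combination of $\wt U,\dots,\wt Z,N_1,\dots,N_4$, so Lemma \ref{lemma:kleiman-analog} applies once (i) I identify, for each generator, which orbit closures it meets in one excess dimension, and (ii) I check that it pairs non-negatively with every effective $3$-cycle supported in those excess closures. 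Step (i) is a finite collection of dimension counts for how $\wt U,\dots,\wt Z,N_i$ meet the seven $\textup{PGL}_4$-orbits, and step (ii) reduces — because the cones $\Eff_3(\overline{\OO_i})$ were all computed in Section \ref{section:orbits} — to testing the nef generators against the finitely many extremal classes $X_{3,j}$ of the excess orbit closures, which I expect to be $\overline{\OO_{2,\col}}, \overline{\OO_{3,\col}}, \overline{\OO_{4,\col}}$, and $\overline{\OO_{4,\max}}$. I anticipate step (ii), the bookkeeping of $26$ nef classes against these orbits and their effective generators, to be the main obstacle: it is the largest and most error-prone computation, and correctly pinning down the excess-dimension loci in (i) is the conceptual crux on which the applicability of Lemma \ref{lemma:kleiman-analog} rests.
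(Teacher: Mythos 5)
Your proposal follows essentially the same route as the paper's proof: the same dual basis $e_1,\dots,e_{10}$ with duals $\wt U,\dots,\wt Z, N_1,N_2,N_3,\tfrac19 N_4$, duality certified by an explicit polyhedral computation (the paper uses Macaulay2), effectivity read off from the Section \ref{section:orbits} constructions and Lemma \ref{lemma:aux-classes} (your identifications of generators $11$, $12$, $13$ as $\tfrac13 X_{3,5}$-type, $\tfrac13 X_{3,1}$, and $\tfrac12 X_{3,11}$ are the paper's), and nefness via Lemma \ref{lemma:kleiman-analog} using that every proposed nef class is a non-negative combination of extended MS basis elements, with excess-dimension intersections occurring exactly along $\overline{\OO_{2,\col}}, \overline{\OO_{3,\col}}, \overline{\OO_{4,\col}}, \overline{\OO_{4,\max}}$ as you predict. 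The remaining work you flag (dimension counts and pairing against the orbit closures' effective generators) is precisely the content of the paper's proof, so the plan is correct and complete in outline.
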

\begin{proof} We first show that these two cones are dual. Consider a basis of the ten-dimensional vector space $A_3(\Hilb_3 \PP^3)$ given by
\begin{align*}
  e_1 &= -U + V + Z   & e_6 &= U - V\\
  e_2 &= U - Z        & e_7 &= \nu_1\\
  e_3 &= Y            & e_8 &= \nu_2\\
  e_4 &= X - 3Y       & e_9 &= \nu_3\\
  e_5 &= W - 3X + 3Y  & e_{10} &= \nu_4
\end{align*}
then the duals of these vectors in $A_6(\Hilb_3 \PP^3)$ are
\begin{align*}
  e_1^* &= \wt{U} & e_6^*     &= \wt{Z}\\
  e_2^* &= \wt{V} & e_7^*     &= N_1\\
  e_3^* &= \wt{W} & e_8^*     &= N_2\\
  e_4^* &= \wt{X} & e_9^*     &= N_3\\
  e_5^* &= \wt{Y} & e_{10}^*  &= \tfrac{1}{9}N_4
\end{align*}
Under this basis, our proposed effective cone may be written as 
\begin{align*}
C = \langle e_1,e_2,e_3,e_4,e_5,e_6, e_7, e_8, e_9, e_{10}, e_2+e_4-e_5+2e_6,e_5-2e_7-2e_8+\tfrac{1}{9}e_{10},e_2+e_7+e_9-\tfrac{1}{3}e_{10}\rangle 
\end{align*}
Which, using Macaulay2, we compute to have the following dual cone.
\begin{align*}
C^{\vee} = & \langle e_1^*, \ e_2^*, \ e_3^*,\ e_4^*,\ e_2^* + e_5^*,\ e_4^* + e_5^*,\ e_6^*, \\
& \quad 2e_5^* + e_6^*,\ 2e_2^* + 2e_5^* + e_7^*,\ 2e_4^* + 2e_5^* + e_7^*,\ 2e_5^* + e_6^*+e_7^*,\\
& \quad 2e_2^* + 2e_5^* + e_8^*,\ 2e_4^* + 2e_5^* + e_8^*,\ 2e_5^*+e_6^* + e_8^*, e_9^*,\ \\
& \quad e_2^* + 3e_{10}^*, \ e_2^* + e_5^* + 3e_{10}^*,\ e_9^* + 3e_{10}^*, \ 5e_4^*+5e_5^* + 3e_7^* + 9e_{10}^*, 3e_2^*+3e_5^*+2e_8^*+9e_{10}^*\\
&\quad 5e_2^*+e_7^*+18e_{10}^*, \ 10e_5^*+5e_6^*+6e_7^*+18e_{10}^*, \ 6e_2^* + e_8^* + 18e_{10}^*, \ e_7^* + 5e_9^* + 18e_{10}^* \\
& \quad e_8^* + 6e_9^* + 18e_{10}^*, \ 5e_2^* + 5e_5^* + 4e_7^*+27e_{10}^*\rangle
\end{align*}
Hence the proposed nef and effective cones are dual. All our proposed effective classes are effective, since we've given effective representatives of them in Section \ref{section:orbits} and Lemma \ref{lemma:aux-classes}.

To see that the proposed nef classes are nef, observe that $\wt{U},\wt{W}$, $N_3$ intersect all the orbit closures in the correct dimension. $\wt{Y}$ intersects $\overline{\OO_{2,\col}}, \overline{\OO_{3,\col}}, \overline{\OO_{4,\col}}$ in one dimension higher than expected. $\wt{V}$, $\wt{X}$, $\wt{Z}$, $N_1$, $N_2$. intersect $\overline{\OO_{4,\max}}$ in one dimension higher than expected, and $N_4$ intersects $\overline{\OO_{2,\col}}, \overline{\OO_{3,\col}}, \overline{\OO_{4,\col}}, \overline{\OO_{4,\max}}$ in one dimension higher than expected. All the other intersections between the extended MS basis elements and the orbit closures are of the expected dimension. So, any nonnegative-coefficient sum of $\wt{U},\wt{V},\wt{W},\wt{X},\wt{Y},\wt{Z},$ $N_1,N_2,N_3,N_4$ is a (positive rational multiple of a) class of a subscheme that intersects $\overline{\OO_{2,\col}},\overline{\OO_{3,\col}},$ $\overline{\OO_{4,\col}},$ $\overline{\OO_{4,\max}}$ in at most one dimension greater than expected, and intersects $\overline{\OO_0},\overline{\OO_{1,\nr}},\overline{\OO_{2,\nr}}$ in the expected dimension. By construction, (integer multiples of) the rays generating $\mathcal{C}$ are classes of subschemes that intersect all the threefolds of the $\overline{\OO_{2,\col}},\overline{\OO_{3,\col}}, \overline{\OO_{4,\col}},\overline{\OO_{4,\max}}$ non-negatively and are therefore nef by Lemma \ref{lemma:kleiman-analog}.
\end{proof}
\begin{remark} For ease of reading and referencing, we write the rays of each cone in matrix form. The columns of $M_{3,MS,\Eff}$ give the rays of the effective cone $\Eff_3(\Hilb_3 \PP^3)$ in terms of the extended MS basis. The columns of $M_{3,MS,\Nef}$ give the rays of the nef cone $\Nef^3(\Hilb_3 \PP^3)$ in terms of the extended MS basis.
\begin{align*}
M_{3,\MS,\Eff} &= \begin{bmatrix}
-1 & 1  & 0 & 0  & 0  & 1  & 0 & 0 & 0 & 0 & 3  & 0  & 1\\
1  & 0  & 0 & 0  & 0  & -1 & 0 & 0 & 0 & 0 & -2 & 0  & 0\\
0  & 0  & 0 & 0  & 1  & 0  & 0 & 0 & 0 & 0 & -1 & 1  & 0\\
0  & 0  & 0 & 1  & -3 & 0  & 0 & 0 & 0 & 0 & 4  & -3 & 0\\
0  & 0  & 1 & -3 & 3  & 0  & 0 & 0 & 0 & 0 & -6 & 3  & 0\\
1  & -1 & 0 & 0  & 0  & 0  & 0 & 0 & 0 & 0 & -1 & 0  & -1\\
0  & 0  & 0 & 0  & 0  & 0  & 1 & 0 & 0 & 0 & 0  & -2 & 1\\
0  & 0  & 0 & 0  & 0  & 0  & 0 & 1 & 0 & 0 & 0  & -2  & 0\\
0  & 0  & 0 & 0  & 0  & 0  & 0 & 0 & 1 & 0 & 0  & 0 & 1\\
0  & 0  & 0 & 0  & 0  & 0  & 0 & 0 & 0 & 1 & 0  & \tfrac{1}{9}  & -\tfrac{1}{3}\\
\end{bmatrix}
\end{align*}
\begin{align*}
   M_{3,\MS,\Nef} &=\\
&  \hspace{-1.5em} \begin{bmatrix}
 1 & 0 & 0 & 0 & 0 & 0 & 0 & 0 & 0 & 0 & 0 & 0 & 0 & 0 & 0 & 0 & 0 & 0 & 0 & 0 & 0  & 0  & 0  & 0  & 0  & 0 \\
 0 & 1 & 0 & 0 & 1 & 0 & 0 & 0 & 2 & 0 & 0 & 2 & 0 & 0 & 0 & 1 & 1 & 0 & 0 & 3 & 5  & 0  & 6  & 0  & 0  & 5 \\
 0 & 0 & 1 & 0 & 0 & 0 & 0 & 0 & 0 & 0 & 0 & 0 & 0 & 0 & 0 & 0 & 0 & 0 & 0 & 0 & 0  & 0  & 0  & 0  & 0  & 0 \\
 0 & 0 & 0 & 1 & 0 & 1 & 0 & 0 & 0 & 2 & 0 & 0 & 2 & 0 & 0 & 0 & 0 & 0 & 5 & 0 & 0  & 0  & 0  & 0  & 0  & 0 \\
 0 & 0 & 0 & 0 & 1 & 1 & 0 & 2 & 2 & 2 & 2 & 2 & 2 & 2 & 0 & 0 & 1 & 0 & 5 & 3 & 0  & 10 & 0  & 0  & 0  & 5 \\
 0 & 0 & 0 & 0 & 0 & 0 & 1 & 1 & 0 & 0 & 1 & 0 & 0 & 1 & 0 & 0 & 0 & 0 & 0 & 0 & 0  & 5  & 0  & 0  & 0  & 0 \\
 0 & 0 & 0 & 0 & 0 & 0 & 0 & 0 & 1 & 1 & 1 & 0 & 0 & 0 & 0 & 0 & 0 & 0 & 3 & 0 & 1  & 6  & 0  & 1  & 0  & 4 \\
 0 & 0 & 0 & 0 & 0 & 0 & 0 & 0 & 0 & 0 & 0 & 1 & 1 & 1 & 0 & 0 & 0 & 0 & 0 & 2 & 0  & 0  & 1  & 0  & 1  & 0 \\
 0 & 0 & 0 & 0 & 0 & 0 & 0 & 0 & 0 & 0 & 0 & 0 & 0 & 0 & 1 & 0 & 0 & 1 & 0 & 0 & 0  & 0  & 0  & 5  & 6  & 0 \\
 0 & 0 & 0 & 0 & 0 & 0 & 0 & 0 & 0 & 0 & 0 & 0 & 0 & 0 & 0 & \tfrac{1}{3} & \tfrac{1}{3} & \tfrac{1}{3} & 1 & 1 & 2 & 2 & 2 & 2 & 2 & 3 
\end{bmatrix}
\end{align*}
\end{remark}
\begin{corollary}\label{cor:MAIN(co)dim2-eff-neff}The columns of $M_{2,\RL,\Eff}$ give the rays of $\Eff_2(\Hilb_3 \PP^3)$ in the RL basis and the columns of $M_{2,\RL,\Nef}$ give the rays of $\Nef^2(\Hilb_3 \PP^3)$ in the RL basis, where
\begin{align*}
M_{2,\RL,\Eff} &=\begin{bmatrix}
  0 & 0 &  1 & 0 & -2 &  4 &  0\\
  0 & 0 &  1 & 0 & -2 &  3 &  1\\
  0 & 0 & -2 & 1 &  2 & -3 &  0\\
  1 & 0 & -3 & 0 &  1 & -1 &  0\\
 -1 & 1 & -4 & 0 &  6 & -8 & -5\\
  0 & 0 &  0 & 0 &  1 & -2 &  0
 \end{bmatrix} \end{align*}
 \begin{align*}
M_{2,\RL,\Nef} &= \begin{bmatrix}
  0 &  0 & 0 & 0 &  1 & 1 & -1 & -1\\
  0 &  0 & 0 & 0 &  0 & 0 &  1 &  1\\
  1 &  0 & 0 & 0 &  1 & 0 &  3 &  1\\
  1 &  1 & 0 & 0 &  1 & 0 &  2 &  0\\
 -2 & -1 & 0 & 1 & -2 & 1 & -4 &  2\\
 -2 &  0 & 1 & 0 & -2 & 0 & -4 &  0
\end{bmatrix}
\end{align*}
\end{corollary}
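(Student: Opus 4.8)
The plan is to derive this corollary directly from Theorem \ref{theorem:MAIN(co)dim2-eff-neff}, which already establishes the effective and nef cones as the cones spanned by the columns of $M_{2,\MS,\Eff}$ and $M_{2,\MS,\Nef}$ in the extended MS basis. All of the geometric content—which classes are effective and which are nef—has therefore been settled; what remains is a purely linear-algebraic change of coordinates from the MS basis to the RL basis, carried out separately in $A_2(\Hilb_3 \PP^3)$, where $\Eff_2$ lives, and in $A_7(\Hilb_3 \PP^3)$, where $\Nef^2$ lives.

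For the effective cone, I would first assemble the change-of-basis matrix $C$ whose columns record the six identities of Proposition \ref{prop:P3-conversion} expressing the RL basis elements $P_3 H^2 \ell,\ P_3 P_2 H^2,\ P_3 \ell^2,\ P_3 \ell p,\ P_3 P_2 \ell,\ P_2 H \ell p$ in terms of the MS basis $\alpha,\beta,\gamma,\delta,\epsilon,\mu$. Since a class whose MS-coordinate vector is $\vec v_{\MS}$ has RL-coordinate vector $\vec v_{\RL}=C^{-1}\vec v_{\MS}$, the desired matrix is $M_{2,\RL,\Eff}=C^{-1}M_{2,\MS,\Eff}$. I would then perform the analogous step for the nef cone using the second block of identities in Proposition \ref{prop:P3-conversion}, which express $P_2,\ P^2,\ PH,\ H^2,\ \ell,\ p$ in terms of the MS basis $\wt A,\wt B,\wt C,\wt D,\wt E,M$; inverting this second matrix and applying it to $M_{2,\MS,\Nef}$ yields $M_{2,\RL,\Nef}$.

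Finally I would verify that the resulting columns agree, up to positive rescaling, with the two stated matrices. Because rays are only defined up to a positive scalar, I would also confirm that no column requires a sign flip, which would not be a legitimate rescaling of a ray and would signal an error in the change of basis. The only genuine obstacle is bookkeeping: keeping straight the two distinct Chow groups, the ordering conventions of each basis, and the direction of each change of basis—Proposition \ref{prop:P3-conversion} expresses the RL basis in terms of the MS basis, so it is the \emph{inverse} that converts a cone presented in MS coordinates into RL coordinates. These computations are routine and, as noted among the introductory remarks, can be cross-checked with the accompanying Mathematica and Python code.
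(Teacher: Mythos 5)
Your proposal is correct and is essentially identical to the paper's own proof: the paper likewise computes $M_{2,\RL,\Eff} = (E_{2,\RL,\MS})^{-1}M_{2,\MS,\Eff}$ and $M_{2,\RL,\Nef} = (E_{7,\RL,\MS})^{-1}M_{2,\MS,\Nef}$, where $E_{2,\RL,\MS}$ and $E_{7,\RL,\MS}$ are exactly the change-of-basis matrices you describe from Proposition \ref{prop:P3-conversion}, inverted for the same reason you give. Your added caution about scaling is harmless but unnecessary, since the inverted change of basis produces the stated matrices exactly.
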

\begin{proof}
This follows from $M_{2,\RL,\Eff} = (E_{2,\RL,\MS})^{-1}M_{2,\MS,\Eff}$ and $M_{2,\RL,\Nef} = (E_{7,\RL,\MS})^{-1}M_{2,\MS,\Nef}$, where $E_{2,\RL,\MS}, E_{7,\RL,\MS}$ are the change of bases given in Proposition \ref{prop:P3-conversion}.
\end{proof}
\begin{corollary}\label{cor:MAIN(co)dim3-eff-neff}The columns of $M_{3,\RL,\Eff}$ give the rays of $\Eff_3(\Hilb_3 \PP^3)$ in the RL basis and the twenty-six columns of $M_{3,\RL,\Nef,1}$ and $M_{3,\RL,\Nef,2}$  give the rays of $\Nef^3(\Hilb_3 \PP^3)$ in the RL basis, where
  \begin{align*}
  M_{3,\RL,\Eff} &= \begin{bmatrix}
   0 &  5 & 0 &  0 & -3 & -4 & -1 & -1 & -2 &  27 &  0 &   4 &  -7 \\
   1 &  1 & 0 &  0 &  0 & -2 & -1 & -3 &  0 &  81 & -3 &  17 & -27 \\
  -3 & -3 & 1 & -3 &  3 &  6 &  3 &  0 &  0 & -81 &  3 & -12 &  27 \\
   0 &  4 & 0 &  1 & -3 & -4 & -1 & -1 &  0 &  36 &  0 &   5 &  -9 \\
   0 & -1 & 0 &  0 &  1 &  1 &  0 &  0 &  0 &   0 &  0 &   1 &  -1 \\
   0 & -1 & 0 &  0 &  0 &  1 &  0 &  0 &  0 &   0 &  1 &   0 &  -1 \\
   0 &  0 & 0 &  0 &  0 &  0 &  0 &  1 &  0 & -18 &  0 &  -4 &   6 \\
   0 &  0 & 0 &  0 &  0 &  0 &  1 & -3 &  0 &  18 &  0 &   6 &  -5 \\
   0 &  0 & 0 &  0 &  0 &  0 &  0 & -2 &  1 &  27 &  0 &   7 &  -8 \\
   0 &  0 & 0 &  0 &  0 &  0 &  0 &  0 &  0 &   9 &  0 &   1 &  -3
  \end{bmatrix}
\end{align*}
\begin{align*}
   M_{3,\RL,\Nef,1} &= \begin{bmatrix}
  -3 & -15 &  6 & 0 & -14 & 1 &  9 & 11 & -28 &  2 & 11 & -31 & -1  \\
   0 &   0 &  0 & 0 &   0 & 0 &  0 &  0 &   0 &  0 &  0 &   0 &  0  \\
   1 &   5 & -2 & 0 &   5 & 0 & -3 & -3 &   9 & -1 & -4 &  10 &  0  \\
   0 &  -1 &  0 & 0 &  -1 & 0 &  1 &  1 &  -1 &  1 &  2 &  -2 &  0  \\
   0 &   0 &  0 & 0 &   0 & 0 &  0 &  0 &   1 &  1 &  1 &   0 &  0  \\
  -1 &  -3 &  2 & 0 &  -3 & 0 &  2 &  2 &  -8 & -2 &  0 &  -6 &  0  \\
   0 &   0 &  0 & 0 &   0 & 0 &  0 &  0 &  -2 & -2 & -2 &   1 &  1  \\
   0 &  -1 &  1 & 0 &  -1 & 0 &  1 &  1 &  -2 &  0 &  1 &  -2 &  0  \\
   1 &   5 & -3 & 0 &   5 & 0 & -4 & -4 &  10 &  0 & -4 &  10 &  0  \\
   0 &   4 &  0 & 1 &   4 & 1 & -4 & -4 &   8 &  2 & -4 &   8 &  2  \\
  \end{bmatrix}\\
  M_{3,\RL,\Nef,2} &=
    \begin{bmatrix}
      8 &  3 & -16    & -15    &  2    &   2 & -51 &  -81 &  49 &  -99 &   9 &   9 &  -79\\
      0 &  0 &   \tfrac{1}{3}  &   \tfrac{1}{3}  &  \tfrac{1}{3}  &   1 &   1 &   2  &   2 &    2 &   2 &   2 &    3 \\
     -3 & -1 &   \tfrac{16}{3} &   \tfrac{16}{3} &  -\tfrac{2}{3} &  -2 &  16 &   26 & -19 &   32 &  -4 &  -4 &   24\\
      1 &  0 &  -1    &  -1    &  0    &   3 &  -3 &   -4 &  11 &   -6 &   1 &   0 &   -1 \\
      0 &  0 &   0    &   0    &  0    &   3 &   0 &    1 &   6 &    0 &   1 &   0 &    4 \\
      2 &  1 &  -3    &  -3    &  1    &  -6 &  -9 &  -17 &  -2 &  -18 &   3 &   6 &  -23 \\
      1 &  0 &   0    &   0    &  0    &  -6 &   2 &   -2 & -12 &    1 &  -2 &   1 &   -8 \\
      1 &  0 &  -1    &  -1    &  0    &   0 &  -3 &   -5 &   5 &   -6 &   0 &   0 &   -5 \\
     -4 &  0 &   5    &   5    &  0    &   0 &  15 &   25 & -20 &   30 &   0 &   0 &   25 \\
     -4 &  0 &   4    &   4    &  0    &   5 &  12 &   20 & -20 &   24 &   0 &   0 &   20 
    \end{bmatrix}
  \end{align*}
\end{corollary}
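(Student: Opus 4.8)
The plan is to obtain this corollary from Theorem \ref{theorem:MAIN(co)dim3-eff-neff} by a pure change of basis, in exact parallel with the way Corollary \ref{cor:MAIN(co)dim2-eff-neff} was deduced from Theorem \ref{theorem:MAIN(co)dim2-eff-neff}. Theorem \ref{theorem:MAIN(co)dim3-eff-neff} already records the extremal rays of $\Eff_3(\Hilb_3 \PP^3)$ and $\Nef^3(\Hilb_3 \PP^3)$ in the extended MS basis, with coordinates packaged as the columns of $M_{3,\MS,\Eff}$ and $M_{3,\MS,\Nef}$. Since a linear isomorphism of the ambient vector space sends a finite polyhedral cone to a finite polyhedral cone and carries extremal rays to extremal rays, it suffices to apply the relevant change-of-basis matrices to those columns. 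The effective cone lives in $A_3(\Hilb_3 \PP^3)$ and the nef cone in $A_6(\Hilb_3 \PP^3)$, so the two conversions invoke the two change-of-basis matrices produced in the Proposition of Subsection \ref{subsec:extMS-codim3}.

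Concretely, I would set
\begin{align*}
M_{3,\RL,\Eff} &= (E_{3,\RL,\MS})^{-1} M_{3,\MS,\Eff} = E_{3,\MS,\RL}\, M_{3,\MS,\Eff},\\
M_{3,\RL,\Nef} &= (E_{6,\RL,\MS})^{-1} M_{3,\MS,\Nef},
\end{align*}
where $E_{3,\MS,\RL}$ and $E_{6,\RL,\MS}$ are the explicit matrices computed in Subsection \ref{subsec:extMS-codim3}, and then verify that the resulting columns coincide with the displayed matrices $M_{3,\RL,\Eff}$, $M_{3,\RL,\Nef,1}$, $M_{3,\RL,\Nef,2}$. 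Because rescaling a generator by a positive constant and reordering generators leave the cone unchanged, it is enough that the two generating sets cut out the same cone; in the tabulation above they are arranged so that the columns match directly.

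The only real obstacle is computational bookkeeping rather than geometry. The nef cone has twenty-six extremal rays, split across the two blocks $M_{3,\RL,\Nef,1}$ and $M_{3,\RL,\Nef,2}$, so the $10 \times 26$ product $(E_{6,\RL,\MS})^{-1} M_{3,\MS,\Nef}$ must be carried out and transcribed without error, and the fractional entries coming from the $\tfrac{1}{9}N_4$ normalization visible in $E_{3,\RL,\MS}$ and $E_{6,\RL,\MS}$ demand care. As with the other matrix computations in the paper, I would run these products in Macaulay2 or Mathematica. No new geometric input is needed: duality of the two cones and nefness of the proposed nef classes were already established inside the proof of Theorem \ref{theorem:MAIN(co)dim3-eff-neff}, and the change of basis transports those facts verbatim.
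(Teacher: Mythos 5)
Your proposal is correct and matches the paper's approach exactly: the paper derives this corollary (as it does Corollary \ref{cor:MAIN(co)dim2-eff-neff}) by the same pure change of basis, computing $M_{3,\RL,\Eff} = E_{3,\MS,\RL}\,M_{3,\MS,\Eff}$ and $M_{3,\RL,\Nef} = (E_{6,\RL,\MS})^{-1}M_{3,\MS,\Nef}$ with the matrices from Subsection \ref{subsec:extMS-codim3}, carried out by computer algebra. Your observation that a linear isomorphism preserves polyhedral cones and their extremal rays (up to positive rescaling) is precisely the justification the paper relies on.
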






\appendix
\section{Some computations and auxiliary classes} \label{appendix:computations}
In this section we compute some classes used in the main body of the paper. We begin by computing the MS elements in the RL basis, and then in the second subsection we describe some classes needed in the computations of Section \ref{section:orbits}. 
\subsection{Classes appearing as basis elements}
Recall the definitions of $\nu_1,\nu_2,\nu_3,\nu_4$ from Section \ref{subsec:extMS-codim3}. Figure \ref{fig:co-dim3P3pics} in particular gives pictures of a generic member of each of the loci $\nu_1,\nu_2,\nu_3,\nu_4$.
\begin{lemma}\label{lem:nu-int-prods} We have the following values for the intersections of $\nu_1,\nu_2,\nu_3,\nu_4$ with the RL basis of $A_6(\Hilb_3 \PP^3)$.
  \[
    \begin{tabular}{c|cccccccccc}
      & $P_3$ & $P_2P$ & $P_2 H$ & $PH^2$ & $P^2H$ & $P \ell$ & $P p$ & $H^3$ & $H \ell$ & $\betarep$\\
      \hline 
      $\nu_1$ & 0 & 0 & 0 & 0 & 1 & 0 & 0 & 0 & 0 & 0\\
      $\nu_2$ & 0 & 0 & 0 & 0 & 2 & 0 & 1 & 0 & 0 & 0\\
      $\nu_3$ & 0 & 0 & 0 & 1 & 1 & 1 & 0 & 1 & 1 & 0\\
      $\nu_4$ & 0 & 9 & 0 & 0 & 0 & 0 & 0 & 0 & 0 & 0
     \end{tabular}
     \]
\end{lemma}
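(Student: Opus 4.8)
The plan is to evaluate each of the $40$ pairings $\nu_i\cdot B$, with $B$ running over the ten RL basis elements of $A_6(\Hilb_3\PP^3)$, by separating them into three groups: the many pairings that vanish for elementary reasons, the pairings for the ``reduced'' classes $\nu_1,\nu_3$ (together with the zero pattern of $\nu_4$), which I would read off after restricting to a small parameter space or after writing $\nu_i$ as a product of standard classes, and the genuinely nonreduced pairings, which carry nontrivial multiplicities.

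First I would clear the zeros. The generic member of $\nu_4$ is supported at the single fixed point $Q$, those of $\nu_1,\nu_2$ have support on the fixed flag $Q\subseteq L\subseteq\Pi$, and that of $\nu_3$ is $\{Q,R,T\}$ with $Q,R$ fixed and $T$ free. Choosing general representatives, any RL monomial containing a factor $P_3$, $H$, $\ell$, $p$, or $\betarep$ imposes containment in a general plane, incidence to a general plane or line, or a length-two subscheme in a general plane; none of these can be met by a scheme supported on the fixed data, so every such pairing is empty. This already forces all of the $\nu_4$ row except $\nu_4\cdot P_2P$ to vanish (only the two pure coplanarity conditions $P_2,P$ survive a scheme concentrated at $Q$), and it kills all but $P^2H,\ Pp$ for $\nu_2$ and all but $P^2H$ for $\nu_1$. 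The entries $P_2P$ and $P_2H$ for $\nu_1,\nu_2$ vanish by the same principle: the span of a generic member already contains $L$, so it cannot also contain the general line defining $P_2$.

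For the surviving reduced entries I would restrict to a parameter space. The cycle $\nu_3$ is birational to the $\PP^3$ of the free point $T$, on which $P$ and $H$ both restrict to the hyperplane class and $\ell$ to the class of a line; the five nonzero pairings $PH^2,P^2H,P\ell,H^3,H\ell$ therefore all equal $\deg\PP^3=1$. For $\nu_1\cong\PP(T_Q\PP^3)\times L=\PP^2\times\PP^1$, using $Q\in L$ one checks that coplanarity with a point pulls back to the hyperplane class $a$ on the $\PP^2$ factor and incidence to a plane to the point class $b$ on $L$, giving $\nu_1\cdot P^2H=\int a^2b=1$. For $\nu_3$ and $\nu_4$ the cleaner route, matching the style of Proposition \ref{prop:(co)dim3-MS-basis-ints}, is to use product expressions: $\nu_3=(3P_3-P_2H+P\ell)^2$ by Lemma \ref{lemma:class-contains-point}, and $\nu_4=F\cdot N_3\cdot\overline{\OO_{2,\nr}}$ by definition. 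I would expand each into weight-nine monomials in $P,H,P_3,P_2,\ell,p$ via Lemma \ref{lem:F=P+H}, Proposition \ref{prop:P3-conversion}, and the class of $\overline{\OO_{2,\nr}}$ from Subsection \ref{subsec:O2nr}, and then read the pairings straight off the tables of \cite{RL90}; this reproduces $\nu_4\cdot P_2P=9$ exactly as in the computation of $N_4\cdot\nu_4$.

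The hard part will be the nonreduced entries, above all $\nu_2\cdot P^2H=2$ and $\nu_4\cdot P_2P=9$, where a naive transversal count gives the wrong answer because the relevant intersections are supported on, and acquire multiplicity along, a collinear or totally nonreduced sublocus of $\nu_i$. For $\nu_2$ this excess is visible through the relation $P^2=P_2+\overline{\OO_{2,\col}}$ of Equation \eqref{P^2}: since $P_2$ restricts to zero, the whole pairing becomes $\nu_2\cdot\overline{\OO_{2,\col}}\cdot H$, concentrated at the collinear members where the moving length-two scheme points along $L$, and the contribution there must be computed with its scheme-theoretic multiplicity rather than set-theoretically. I would pin these multiplicities down by the explicit-coordinate method already used for $M\cdot\mu$ in Subsection \ref{Subsec:lifts-MS}: write the ideal of the relevant length-two (or totally nonreduced) family in affine coordinates, impose the defining equations of $P^2H$ (respectively $P_2P$), and compute the length of the resulting Artinian quotient. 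Getting these local multiplicities right, rather than the vanishing entries, is where essentially all the content of the lemma lies, which is why routing $\nu_3$ and $\nu_4$ through their product expressions and the \cite{RL90} tables is the safest way to avoid the delicate bookkeeping.
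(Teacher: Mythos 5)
Your treatment of the last two rows is exactly the paper's: $\nu_3=N_3^2=(3P_3-P_2H+P\ell)^2$ (Lemma \ref{lemma:class-contains-point}) and $\nu_4=F\cdot N_3\cdot\overline{\OO_{2,\nr}}$ by definition, expanded into monomials in $P,H,P_3,P_2,\ell,p$ and evaluated against the tables of \cite{RL90}. But your justification of the zeros is genuinely broken as stated: you claim that any RL monomial containing a factor $P_3$, $H$, $\ell$, $p$, or $\betarep$ pairs to zero with $\nu_1,\nu_2,\nu_4$, and then exempt $P^2H$ and $Pp$ --- yet $P^2H$ contains the factor $H$ and $Pp$ contains the factor $p$, so your principle, taken literally, kills the very entries the lemma asserts are $1$ and $2$. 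The principle is valid only for $\nu_4$, whose members are supported at the fixed point $Q$, which no general plane or line meets. For $\nu_1$ and $\nu_2$ the support contains a point moving along $L$ (which does meet a general plane), and for $\nu_2$ the moving double point can lie inside a general plane; this is precisely why those entries are nonzero. The vanishing entries therefore need individual emptiness checks (e.g. $\nu_1\cdot PH^2=0$ because the single moving point cannot lie on two general planes at once; $\nu_1\cdot Pp=0$ because every length-two subscheme of a member of $\nu_1$ involves $Q$), which is what the paper asserts, tersely, row by row.

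Second, the crux of the lemma, $\nu_2\cdot P^2H=2$, is only promised in your write-up, not carried out. Your route is viable: using $P^2=P_2+\overline{\OO_{2,\col}}$ from Equation \eqref{P^2} and $\nu_2\cdot P_2\cdot H=0$, the pairing becomes $\nu_2\cdot\overline{\OO_{2,\col}}\cdot H$, and in the affine chart of Subsection \ref{Subsec:lifts-MS} the curve $\nu_2\cap\overline{\OO_{2,\col}}$ is the family of double points contained in $L$ (union $Q$), on which the condition $H$ restricts to $\gamma=\beta^2/4$, a double zero at the point $\Pi\cap L$; so the unique intersection point indeed counts with multiplicity two. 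But you still owe the set-up (local equations for $\overline{\OO_{2,\col}}$ near a two-component scheme, plus the properness/Cohen--Macaulayness needed for length to equal intersection multiplicity). The paper takes a different, multiplicity-free path: it writes $\nu_2\cdot P^2H=\nu_2\cdot F^2H$, uses the $Z'_{\Sigma}$ formalism of Appendix \ref{appendix:Grassmannian} to replace $F^2$ by $Z'_{\Sigma_2}$ against $\nu_2H$, reduces to the pencil of length-two schemes cut on the double line $2L$ by lines through a fixed point, and gets the answer $2$ by deforming the double line to a smooth conic meeting the test line in two points. Likewise for $\nu_1\cdot P^2H$ the paper reduces to an explicit Artinian computation in $\Hilb_2\PP^3$, whereas you use the parametrization $\PP^2\times\PP^1$; your count is correct, though you should note that $P$ restricted to $\nu_1$ has excess along the locus of schemes supported entirely at $Q$, harmless here only because the factor $H$ avoids that locus. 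In short: the skeleton is right and parts coincide with the paper, but the vanishing argument must be repaired and the key multiplicity actually computed before this is a proof.
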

\begin{proof}
We begin by computing the intersection products of $\nu_1$ with the RL basis in dimension 6. It is clear that $\nu_2 \cdot \mathcal{B} = 0$ for every basis element that is not $\mathcal{B} = P^2H$, as they have empty intersection.

Write $P^2H = (F^2 - 2FH +H^2)H$, and note that 
\[\nu_1 P^2H = \nu_1(F^2-2FH+H^2)H = \nu_1 F^2 H\]
This reduces to a computation in $\Hilb_2 \PP^3$. Let $F'$ be the locus in $\Hilb_2 \PP^3$ consisting of schemes coplanar with a line $L$, and let $\nu_1'$ be the locus in $\Hilb_2 \PP^3$ consisting of schemes supported at a fixed point $P$. Then:
\[\nu_1 F^2H = (\nu_1')(F')^2.\]

For charts on $\Hilb_2 \PP^3$, we use coordinates $a,b,c,d,e,f$ corresponding to the ideals
\[(x^2+ax+b,y+cx+d,z+ex+f)\]
Then the equations for $\nu_1'$ are $a=b=d=f=0$. If we choose the line in the definition of $F'$ to be the line $x=1,y=z$, then we get the condition $c=e$. For the second copy of $F'$, choose the line to be $x=2, y=-z$ to get the condition $c=-e$. Then:
\begin{align*}
  \nu_1 F^2H = (\nu_1')(F')^2 = \dim_{\C} \frac{\C[a,b,c,d,e,f]}{(a,b,d,f,c+e,c-e)} = 1.
\end{align*}

We now turn our attention to $\nu_2$. Again we compute the intersections with the RL basis in dimension 6. It is clear that $\nu_2 \cdot \mathcal{B} = 0$ for every basis element that is not $\mathcal{B} = P^2H, Pp$. We start by computing 
\begin{align*}
\nu_2 \cdot P^2H  &= \nu_2 \cdot F^2H\\
&= \nu_2 \cdot (Z'_{\Sigma_{1}^2} H ) \tag{see Construction \ref{construction:Z-sigma'}}\\
&= \nu_2 \cdot (Z'_{\Sigma_{1,1}} + Z'_{\Sigma_{2}})H \tag{$\Sigma \mapsto Z'_{\Sigma}$ a group morphism}\\
&=\nu_2(Z'_{\Sigma_2})H.
\end{align*}
Now, $\nu_2(Z'_{\Sigma_2})H$ can be reduced to a computation in $\Hilb_2 \PP^2$. Note that $\nu_2(Z'_{\Sigma_2})$ parametrizes unions of a fixed point on a line $L$ and a nonreduced point supported on that same line that is collinear with a fixed point $R$ not contained in $L$. That is, every scheme parametrized by $\nu_2(Z'_{\Sigma_2})$ is contained in the plane spanned by $L$ and $R$.

Let $L'$ be a fixed line in $\PP^2$. Let $R'$ be a fixed point not on $L'$. Then let $\nu_2'$ denote the locus of nonreduced schemes in $\Hilb_2 \PP^2$ supported at a point of $L'$, and let $H'$ denote the locus of schemes in $\Hilb_2 \PP^2$ incident to a fixed line $L''$. Then,
\[\nu_2 \cdot P^2H = \nu_2\cdot(Z'_{\Sigma_2})H = \nu_2' H'.\]
The class of $\nu_2'$ is given by a pencil of lines through $R'$ intersected with a degenerate conic (specifically, the line $L'$ doubled). Deform the conic to a smooth conic. Then $\nu_2' \cdot H' = 2$, since the conic and the line $L''$ in the definition of $H'$ meet in two points. Thus,
\[\nu_2 \cdot P^2H = \nu_2' H' = 2.\]


For computing $\nu_2 \cdot Pp = \nu_2 \cdot Fp$, this reduces to a computation in $\Hilb_2 \PP^2$. Let $\nu_2''$ denote the locus in $\Hilb_2 \PP^2$ of schemes supported at a fixed point $Q$. Let $F''$ denote the locus of schemes in $\Hilb_2 \PP^2$ collinear with a fixed point $R$. Then:
\[\nu_2 \cdot Pp = \nu_2 \cdot Fp = \nu_2'\cdot F''\]
For a chart on $\Hilb_2 \PP^2$, we use coordinates $a,b,c,d$ corresponding to 
\[(x^2+ax+b,y+cx+d).\]
Let $Q = (0,0)$ and $R = (1,1)$. Similar to the computation of $M \cdot \mu$, we have that $\nu'$ is cut out by $a=b=d=0$, and $F''$ is cut out by $1+c+d=0$. Then 
\[\nu_2 \cdot Pp = \nu_2'\cdot F'' = \dim_{\C} \frac{\C[a,b,c,d]}{(a,b,d,1+c+d)}=1.\]
In summary, we have shown:
\[
\begin{tabular}{c|cccccccccc}
  & $P_3$ & $P_2P$ & $P_2 H$ & $PH^2$ & $P^2H$ & $P \ell$ & $P p$ & $H^3$ & $H \ell$ & $\betarep$\\
  \hline 
  $\nu_1$ & 0 & 0 & 0 & 0 & 1 & 0 & 0 & 0 & 0 & 0\\
  $\nu_2$ & 0 & 0 & 0 & 0 & 2 & 0 & 1 & 0 & 0 & 0
 \end{tabular}\]

For $\nu_3, \nu_4$: The third row is a straightforward computation, and can also be computed by realizing that $\nu_3 = N_3^2 = (3P_3 - P_2H + P \ell)^2$ (Lemma \ref{lemma:class-contains-point}). The fourth row follows from $\nu_4 = F\cdot N_3 \cdot \overline{\OO_{2,\nr}}$.
\end{proof}
\subsection{Classes needed to compute positive cones}
\begin{lemma}\label{lemma:class-contains-point}Let $Y_1$ be the locus of $\Gamma \in \Hilb_3 \PP^3$ containing a fixed point $p$. Let $Y_2$ denote the locus of $\Gamma \in \Hilb_3$ with a length two subscheme contained in a fixed line $L$. Furthermore, let $Y_3$ be the locus of schemes that are incident to a line $L$ and coplanar with $L$. Then:
\begin{align*}
  Y_1 &= 3P_3 - P_2H + P\ell\\
  Y_2 &= -P_3P - P_3H + P_2p\\
  Y_3 &= P \ell - Y_1 = -3P_3 + P_2H
\end{align*}
\end{lemma}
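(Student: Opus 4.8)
The plan is to pin down each class by its intersection numbers against a basis of the complementary‐dimensional Chow group, as licensed by Proposition \ref{prop:chow-hom}, and to recover $Y_3$ from $Y_1$ by a geometric degeneration rather than a fresh computation. Since $Y_1$ is the locus of schemes through a fixed point it has codimension $3$, so it lies in $A_6(\Hilb_3\PP^3)$ and is determined by its pairings with the $A_3$ basis of Table \ref{table:(co)dim3-P3}; likewise $Y_2$, the locus of schemes with a length-two subscheme inside a fixed line, has codimension $4$, lies in $A_5$, and is determined by its pairings with the $A_4$ basis of Table \ref{table:(co)dim4-P3}.

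First I would compute the ten numbers $Y_1\cdot\mathcal B$ as $\mathcal B$ runs over the $A_3$ basis $\{P_3H\ell,\,P_3P_2H,\,P_3^2,\,P_3\betarep,\,P_3H^3,\,P_3PH^2,\,(P_2^2H^2+P_2H^2\ell),\,P_2\ell p,\,P_2\ell^2,\,PH\ell p\}$. In each pairing one of the three points of $\Gamma$ is frozen at $p$, so the count reduces to a computation on the residual length-two subscheme, i.e. an intersection problem on $\Hilb_2\PP^3$ (or on $\Hilb_2$ of a plane or line) handled in the affine charts already used for $M\cdot\mu$ and in Lemma \ref{lem:nu-int-prods}. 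Most pairings vanish because the auxiliary linear conditions in $\mathcal B$ are incompatible with passing through the general point $p$; the survivors are small integers read off as lengths of explicit Artinian quotients. Assembling these into a vector and inverting the intersection matrix of Table \ref{table:(co)dim3-P3} expresses $Y_1$ in the RL basis, which I expect to be $3P_3-P_2H+P\ell$. The computation for $Y_2$ is entirely parallel: pair against the $A_4$ basis, reduce to $\Hilb_2 L\cong\PP^2$ for the length-two part together with the free third point, and invert the matrix of Table \ref{table:(co)dim4-P3} to obtain $-P_3P-P_3H+P_2p$.

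For $Y_3$ I would avoid a third intersection computation and argue geometrically. Take the point $Q$ defining $P$ to lie on the line $L$ defining $\ell$; since all choices of $Q$ and $L$ are rationally equivalent, the product $P\cdot\ell$ may be computed from this special configuration. A scheme $\Gamma$ coplanar with $Q$ and meeting $L$ falls into exactly two cases: either $Q\in\Gamma$, which forces incidence to $L$ at $Q$ and gives a member of $Y_1$; or $\Gamma$ meets $L$ at a second point $R\neq Q$, in which case $Q$ and $R$ both lie on $L$ and in the span of $\Gamma$, so $L$ lies in that span and $\Gamma$ is coplanar with $L$ while meeting it, giving a member of $Y_3$. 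Checking that this intersection is generically transverse along each component, so that both occur with multiplicity one and no excess piece appears, yields the cycle identity $P\cdot\ell=[Y_1]+[Y_3]$, i.e. $Y_3=P\ell-Y_1$. Substituting $Y_1=3P_3-P_2H+P\ell$, the $P\ell$ terms cancel and $Y_3=-3P_3+P_2H$ drops out with no further work.

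The routine part is the bookkeeping of which pairings vanish; the genuine obstacle is twofold: first, verifying transversality and multiplicity one in the chart computations for the nonzero pairings of $Y_1$ and $Y_2$ (the same subtlety handled for $M\cdot\mu$ and the $\nu_i$); and second, confirming that the degenerate product $P\cdot\ell$ with $Q\in L$ picks up no excess component, in particular none along $\overline{\OO_{2,\col}}$ where the plane through $\Gamma$ is non-unique, so that the clean decomposition $P\ell=[Y_1]+[Y_3]$ holds on the nose.
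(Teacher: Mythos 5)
Your proposal follows essentially the same route as the paper: for $Y_1$ and $Y_2$ the paper likewise pins down the classes by their intersection numbers against the complementary RL bases (the vectors $(0,0,0,0,0,0,2,0,1,1)$ and $(0,\dots,0,1,0,\dots,0,1)$ read against Tables \ref{table:(co)dim3-P3} and \ref{table:(co)dim4-P3}), and for $Y_3$ it uses exactly your specialization of $Q$ onto $L$, giving $P\ell = c_1 Y_3 + c_2 Y_1$ supported on those two expected-dimension components.

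The one place you diverge is in how the coefficients are determined. You propose to verify generic transversality along each component directly in charts, and you correctly flag this as the genuine obstacle of your plan. The paper sidesteps it entirely: having written $P\ell = c_1 Y_3 + c_2 Y_1$ with unknown nonnegative multiplicities, it intersects both sides with test classes of complementary dimension --- $P_3\ell H$ kills the $Y_1$ term and forces $c_1 = 1$, and $Y_1^2$ then forces $c_2 = 1$. This numerical determination is strictly easier than a local transversality analysis (which, while it would succeed, requires working in coordinates near a general point of each of $Y_1$ and $Y_3$ inside the degenerate intersection), and it is the move you should adopt to close the acknowledged gap; everything else in your outline matches the paper's proof.
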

\begin{proof}
 $Y_1$ can be computed by computing the following intersection numbers.
\begin{align*}
  \begin{tabular}{c|cccccccccc}
   & $P_3 H \ell$ & $P_3 P_2 H $ & $P_3^2$ & $P_3 \betarep$ & $P_3 H^3$ & $P_3 P H^2$ & $(P_2^2 H^2 + P_2 H^2 \ell)$ & $P_2 \ell p$ & $P_2 \ell^2$ & $P H \ell p$ \\
   \hline 
   $Y_1$ & 0 & 0 & 0 & 0 & 0 & 0 & 2 & 0 & 1 & 1
  \end{tabular}
\end{align*}
We now compute the products of $Y_2$ with the classes of the RL basis of complimentary codimension.
\begin{align*}\hspace{-.5cm}
  \setlength{\tabcolsep}{5pt}
  \begin{tabular}{c|ccccccccccccc}
    & $P_3 H P$ & $P_3 H^2$ & $P_3 \ell $ & $P_3 P_2$ & $P_3 p$ & $P_2 H \ell$ & $P_2^2 H$ & $P_2 \betarep$ & $P_2 H^3$ & $P_2 P H^2$ & $P H^2 \ell$ & $P \ell^2$ & $H \ell p$ \\
    \hline 
    $Y_2$ & 0 & 0 & 0 & 0 & 0 & 0 & 0 & 0 & 0 & 0 & 1 & 0 & 1
   \end{tabular}
\end{align*}
Then using Table \ref{table:(co)dim4-P3}, this yields $Y_2 = -P_3P - P_3 H + P_2 p$. For $Y_3$, consider $P \cdot \ell$, the locus of schemes incident to a line and coplanar with a fixed point. By specializing the fixed point onto the line, we see that
\[P\ell = c_1 Y_3 + c_2 Y_1\]
Intersecting both sides of the equation with $P_3 \ell H$ we see that $c_1 = 1$, and intersecting with $Y_1^2$, we see that $c_2 = 1$. Hence we are done.
\end{proof}
\begin{lemma}\label{lemma:aux-classes}Let $Q \subseteq L \subseteq \Pi$ be a point, line, and plane respectively. Let $T_1$ denote the locus of schemes whose general member is the union of $Q$ and a nonreduced scheme contained in $\Pi$. Let $T_2$ denote the locus of schemes whose general member is contained in $\Pi$ and is the union of a nonreduced length two scheme supported at $Q$ and a reduced point on $\Pi$. Let $T_3$ denote the locus of schemes contained in $\Pi$ that are nonreduced and whose support is contained in $L$. Note that these are threefolds. Then they have the following classes
\begin{align*}
T_1 &= 2(U-V)\\
T_2 &= 2(U-Z)\\
T_3 &= k(X-3Y),
\end{align*}
where $k$ is some positive integer. Let $S_1$ be the locus of schemes contained in $\Pi$ whose general member is a union of $Q$ and a nonreduced point supported at a point of $L$. Let $S_2$ be the locus of schemes contained in $\Pi$ whose general member is a union of a nonreduced point supported at $Q$ and a point of $L$. Then,
\begin{align*}
S_1 &= 2(-\alpha + \beta - \delta) \\
S_2 &= 2(-\alpha + \delta)
\end{align*}
\end{lemma}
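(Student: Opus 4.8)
The plan is to reduce every one of the five loci to a computation inside the distinguished copy of $\Hilb_3 \PP^2$. Each of $T_1, T_2, T_3, S_1, S_2$ is by definition contained in the fixed plane $\Pi$, hence is supported on $i(\Hilb_3 \PP^2)$, where $i\colon \Hilb_3 \PP^2 \hookrightarrow \Hilb_3 \PP^3$ is the inclusion of Subsection \ref{Subsec:lifts-MS} attached to $\Pi$, with the flag $Q \subseteq L \subseteq \Pi$ supplying the fixed point and line of $\PP^2$. Thus each class equals $i_*$ of the class of the corresponding locus computed in $\Hilb_3 \PP^2$. Since the MS classes $\alpha,\beta,\gamma,\delta,\epsilon$ and $U,V,W,X,Y,Z$ on $\Hilb_3 \PP^3$ are by convention the $i_*$-images of their $\PP^2$ namesakes, and $i_*$ is linear, it suffices to compute each locus in $A_\bullet(\Hilb_3 \PP^2)$ in the MS basis and then transport the same coordinate expression upstairs.

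To carry out the $\PP^2$ computations I would use that the MS bases pair nondegenerately: $A_2$ against $A_4$ via Table \ref{table:co-dim2P3}, and $A_3$ against itself via Table \ref{table:co-dim3P2}. Each class is then pinned down by its intersection numbers against the dual MS basis, and each such number is a count of transverse points after putting the defining flags in general position. For $S_1, S_2$ I would pair against $A,B,C,D,E$ and invert the $5\times 5$ matrix of Table \ref{table:co-dim2P3}; for $T_1, T_2$ I would pair against $U,V,W,X,Y,Z$ and invert the $6\times 6$ matrix of Table \ref{table:co-dim3P2}. The support constraints (the schemes lie on $L$, or contain $Q$) force all but one of the pairings to vanish: $S_2$ meets only $B$, with number $2$, giving $S_2 = 2(-\alpha+\delta)$, while $T_1$ meets only $Z$, giving $T_1 = 2(U-V)$, and similarly for $S_1$ and $T_2$. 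Each of these $\PP^2$ loci is an irreducible surface or threefold whose general member consists of two distinct points, so the counts are clean and the coefficients are determined exactly; these classes already occur in \cite{RS21}, so in practice I would cite those transverse counts rather than repeat them.

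The genuinely delicate case is $T_3$, which is exactly why its coefficient is recorded only as an unspecified positive integer $k$. Here I would first read $T_3$ as the closure of its two-point stratum (a reduced point together with a length-two nonreduced scheme, both supported on $L$), an irreducible threefold that must be distinguished from the separate three-dimensional locus of length-three schemes supported at a single point of $L$; the latter contributes a class proportional to $W-3X+3Y$ rather than $X-3Y$, and including it would destroy proportionality. Pairing $T_3$ against $U,V,W,Z$ vanishes by the same support arguments, and the crux is the pairing with $Y$. A length-three scheme contained in a general line $L_Y\subseteq\Pi$ and supported at $L\cap L_Y$ is curvilinear and tangent to $L_Y$, whereas every punctual flat limit of two points colliding along $L$ necessarily records the direction of $L$ (and is therefore either tangent to $L$ or has two-dimensional tangent space); such a scheme is thus not in $T_3$, so $T_3\cap Y=\emptyset$ and $T_3\cdot Y=0$. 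With the $U,V,W,Y,Z$ pairings all zero, Table \ref{table:co-dim3P2} forces $T_3=k(X-3Y)$ with $k=T_3\cdot X$, and $k>0$ because $T_3$ is a nonempty effective cycle meeting a general translate of $X$ properly. I expect this last point — isolating the two-point component and verifying the vanishing of $T_3\cdot Y$ — to be the main obstacle, since it is precisely where the naive set-theoretic reading of $T_3$ gives a wrong, nonproportional answer; pinning down the exact value of $k$ would require a local multiplicity computation at a tangential configuration, which the application fortunately does not need.
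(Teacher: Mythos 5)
Your proposal is correct, and at bottom it is the same argument the paper invokes, but made self-contained: the paper's proof of this lemma is purely a citation to \cite{RS21} ($T_1,T_2$ are their $T_6,T_7$ from Section 5.4, $S_1,S_2$ are their $S_5,S_{10}$, and $T_3$ comes from their proof of Theorem 6), with the reduction to $\Hilb_3 \PP^2$ left implicit, whereas you spell out both the pushforward step (legitimate since the MS classes on $\Hilb_3 \PP^3$ are by convention the $i_*$-images of their $\PP^2$ namesakes and $i_*$ is linear) and the dual-pairing computations against Tables \ref{table:co-dim2P3} and \ref{table:co-dim3P2}. Your quoted pairings do invert to the stated classes ($S_2\cdot B=2$ with all other pairings zero gives $2(-\alpha+\delta)$; $T_1\cdot Z=2$ with all else zero gives $2(U-V)$, and similarly for $S_1$, $T_2$). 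The genuinely valuable part of your writeup is the $T_3$ analysis: as literally defined the locus is reducible, the extra three-dimensional component being the punctual schemes supported along $L$, whose class is a positive multiple of $W-3X+3Y$; only the closure of the two-point stratum is proportional to $X-3Y$, and your flat-limit argument (punctual limits of collisions along $L$ are either tangent to $L$ or are fat points, hence never lie in a general line $L_Y$) is exactly what forces $T_3\cdot Y=0$. Two refinements: the same flat-limit analysis, not support alone, is what is really needed for a few of the other vanishings (e.g.\ $S_2\cdot A=0$, since the closure of $S_2$ contains collinear and punctual members and one must rule out curvilinear limits along the line $QQ'$); and $k>0$ follows most cheaply from $X$ being nef on $\Hilb_3\PP^2$ (Theorem \ref{theorem:(co)dim2P2}) together with the fact that a nonzero effective cycle has nonzero numerical class, which avoids having to verify that $T_3$ meets a general translate of $X$ properly along the boundary strata.
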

\begin{proof}
These results can be found in \cite{RS21}. For the first two equations, see the computation of $T_6,T_7$ in Section 5.4. For the third equation, see the proof of Theorem 6. For the two surface classes, see $S_5,S_{10}$ in Section 5.4.
\end{proof}
\begin{lemma}\label{lem:W1W2class}Fix a point $Q$. Let $W_1$ be the locus in $\Hilb_3 \PP^3$ whose general member is the union of a nonreduced length two scheme supported at $Q$ and a reduced point. Let $W_2$ be the locus in $\Hilb_3 \PP^3$ whose general member is the union of $Q$ and a nonreduced length 2 scheme. 
\begin{align*}
  W_1 &= 2(-5P_3 P + 5P_3 H - P_2 H^2 + P_2HP - 2P_2 \ell + 2P_2 p - (P^2H^2 + PH^3) + 5PH\ell + 4P\betarep)\\
  W_2 &=2(5P_3P -P_3H -P_2HP + P_2\ell - 2P_2 p + (P^2H^2 + PH^3) - 4PH\ell - 4P\betarep)
\end{align*}
Here the scheme structure is determined by $W_1 + W_2 = \overline{\OO_{1,\nr}} \cdot N_3$, meaning we are not necessarily taking the reduced structure.
\end{lemma}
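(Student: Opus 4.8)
The plan is to pin down the two classes from a single cycle identity, namely their defining scheme structure, together with one honest intersection computation that separates them. First I would record the two input classes. By Lemma \ref{lemma:class-contains-point} the locus of schemes through $Q$ is $N_3 = Y_1 = 3P_3 - P_2 H + P\ell$, and the locus of nonreduced schemes is $\overline{\OO_{1,\nr}} = \overline{\OO_1} = 2(H-P)$. Set-theoretically the intersection $\overline{\OO_{1,\nr}} \cap N_3$ parametrizes length-$3$ nonreduced schemes containing $Q$, and such a scheme either has its length-$2$ nonreduced part supported at $Q$ (the generic member of $W_1$) or else has $Q$ as an isolated reduced point with the nonreduced part elsewhere (the generic member of $W_2$). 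Giving $W_1$ and $W_2$ the scheme structures induced by this intersection is exactly the normalization declared in the statement, so we obtain the cycle identity $W_1 + W_2 = \overline{\OO_{1,\nr}} \cdot N_3 = 2(H-P)(3P_3 - P_2 H + P\ell)$. Expanding the product and rewriting it in the $\RL$ basis of $A_5(\Hilb_3 \PP^3)$ (via the relations among the weight-$4$ monomials recorded through Table \ref{table:(co)dim4-P3}) yields precisely the sum of the two stated formulas, which I would verify coefficient by coefficient.

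This supplies one of the two linear relations needed; the second comes from computing one class outright, say $W_2$. Since numerical and rational equivalence agree by Proposition \ref{prop:chow-hom}, it suffices to pair $W_2$ against the thirteen elements of the $\RL$ basis of $A_4(\Hilb_3 \PP^3)$. Most of these products vanish for support reasons: the presence of the fixed reduced point $Q$ is incompatible with the generic positional conditions imposed by a dual class, forcing empty intersection. The surviving products I would compute by the same local-coordinate transversality method used for $M\cdot\mu$ and for $\nu_1,\nu_2$ in Lemma \ref{lem:nu-int-prods}. Because $Q$ splits off as an isolated reduced point of every member of $W_2$, each such product reduces to a computation in $\Hilb_2 \PP^3$ (or, after a further collinearity reduction, in $\Hilb_2 \PP^2$); in an affine chart with ideal $(x^2 + ax + b,\, y + cx + d,\, z + ex + f)$ I would write down the equations cutting out $W_2$ together with those of the dual cycle and read off the length of the resulting Artinian quotient.

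With the full intersection vector of $W_2$ computed, expressing it in the $\RL$ basis through the same tables gives the stated formula for $W_2$, and then $W_1 = \overline{\OO_{1,\nr}} \cdot N_3 - W_2$ produces the formula for $W_1$. I expect the separation step, namely computing $W_2$ (equivalently $W_1$) on its own, to be the main obstacle: the sum is forced formally, but distinguishing a double point at $Q$ with a free reduced point from a reduced $Q$ with a free double point requires genuine local geometry. The delicate part is bookkeeping the nonreduced scheme structure so that the multiplicities appearing in the local Artinian quotients are consistent with the normalization fixed by $W_1 + W_2 = \overline{\OO_{1,\nr}}\cdot N_3$, rather than with the reduced structure, which could differ from it by a constant factor.
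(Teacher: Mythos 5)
Your overall skeleton (use the normalization $W_1 + W_2 = \overline{\OO_{1,\nr}} \cdot N_3$ as one relation, plus an independent computation to separate the two components) matches the logic of the paper, but your separation step has a genuine gap that you flag yourself and never close. The local-coordinate transversality computations you propose (in the style of $M\cdot\mu$ and Lemma \ref{lem:nu-int-prods}) compute intersection numbers of the \emph{reduced} cycle underlying $W_2$. But the class $W_2$ in the lemma is, by the stated normalization, the component of the intersection cycle $\overline{\OO_{1,\nr}} \cdot N_3$ supported on that locus, i.e.\ $W_2 = k_2\,(W_2)_{\mathrm{red}}$ for an unknown positive integer multiplicity $k_2$. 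Your plan ``compute $W_2$ outright, then set $W_1 = \overline{\OO_{1,\nr}}\cdot N_3 - W_2$'' therefore produces $W_1 = k_1 (W_1)_{\mathrm{red}} + (k_2 - 1)(W_2)_{\mathrm{red}}$, which equals the asserted class only if $k_2 = 1$; nothing in your argument establishes this, and establishing it would require a genuine generic-transversality analysis of $\overline{\OO_{1,\nr}} \cap N_3$ along that component. Saying the multiplicities must be ``consistent with the normalization'' is a restatement of the problem, not a solution.

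The paper's proof is engineered precisely to sidestep this. It first pins down \emph{both} classes up to their unknown scalars: all pairings of $W_1, W_2$ with the $A_4$ RL basis vanish except against $P_2^2 H$ and $P_2 P H^2$, and for those the key trick is to intersect with $P_2$ first, recognizing $W_1 \cdot P_2 = k_1 \cdot 2(U-Z)$ and $W_2 \cdot P_2 = k_2 \cdot 2(U-V)$ as multiples of the known Ryan--Stathis classes of Lemma \ref{lemma:aux-classes} (no local coordinate work at all). This gives intersection vectors $(2k_1, 2k_1)$ and $(2k_2, 4k_2)$ against $(P_2^2H, P_2PH^2)$. Since these directions are linearly independent, pairing the identity $W_1 + W_2 = \overline{\OO_{1,\nr}}\cdot N_3 = 2(H-P)(3P_3 - P_2H + P\ell)$ against the same two classes (yielding $4$ and $6$) gives the linear system $2k_1 + 2k_2 = 4$, $2k_1 + 4k_2 = 6$, hence $k_1 = k_2 = 1$. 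In other words, the sum identity is not a bookkeeping check, as in your plan, but the very equation that determines the multiplicities; your approach would be repaired by adopting exactly this structure, computing the reduced classes of both $W_1$ and $W_2$ (up to scalar) and then solving for both scalars simultaneously.
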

\begin{proof} We first focus on $W_1$. We compute its intersections with RL basis members in $A_4(\Hilb_3 \PP^3)$. For any of the basis elements involving a $P_3$ factor, clearly their intersection with $W_1$ is zero. Furthermore,
\[W_1 \cdot P_2 H \ell = W_1 \cdot P_2 \betarep = W_1 \cdot P_2 H^3 = W_1\cdot PH^2\ell = W_1 \cdot P\ell^2 = W_1 \cdot H \ell p = 0\]
since none of $P_2 H \ell, P_2 \betarep, P_2 H^3, PH^2\ell, P\ell^2, H \ell p$ contain schemes with length two supported at $Q$. That leaves computing $W_1 \cdot P_2^2 H, W_1 \cdot P_2 P H^2$. 

Let $\Pi$ be a fixed plane containined $Q$. Now, note that $W_1 \cdot P_2$ is some multiple of the locus whose general member is contained in $\Pi$ and consists of a length two scheme supported at $Q$ union with a nonreduced point. That is,
\begin{align*}
W_1 \cdot P_2 = k_1\left(\begin{tikzpicture}[scale=.70,baseline=4ex]
  \filldraw[fill=white](0,1)--(1,2)--(3,1)--(2,0)--(0,1);
    \filldraw (1,1.25) circle (.13);
    \draw[dashed,-stealth](1,1.25)--(1.5,1.5);
    \draw (2,.75) circle (.13);
    \end{tikzpicture}\right) = k_1(2(U-Z))
\end{align*}
for some positive constant $k_1$. Then,
\begin{align*}
W_1 \cdot P_2^2H = 2k_1(U-Z)P_2H = 2k_1(U-Z)(-\wt{U}+\wt{V}+3\wt{Y}+\wt{Z}) = 2k_1\\
W_1 \cdot P_2PH^2 = 2k_1(U-Z)PH^2 = 2k_1(U-Z)(\wt{V} -\wt{W} + 4\wt{X} + 2\wt{Z} + N_3) = 2k_1
\end{align*}
We now turn our attention to $W_2$. Similar to $W_1$, we have that the product of $W_2$ and any RL basis element of $A_4(\Hilb_3 \PP^3)$ involving $P_3$ is zero. Likewise,
\[W_2 \cdot P_2 H \ell = W_2 \cdot P_2 \betarep = W_2 \cdot P_2 H^3 = W_2\cdot PH^2\ell = W_2 \cdot P\ell^2 = W_2 \cdot H \ell p = 0\]
as none of $P_2 H \ell, P_2 \betarep, P_2 H^3, PH^2\ell, P\ell^2, H \ell p$ have enough flexibility of the support to both containg $Q$ and a nonreduced length two scheme supported not at $Q$. Now note that
\[W_2 \cdot P_2 = k_2T_6 = k_2(2(U-V)).\]
for some positive constant $k_2$. Then,
\begin{align*}
W_2 \cdot P_2^2H = 2k_2(U-V)P_2H = 2k_2(U-V)(-\wt{U}+\wt{V}+3\wt{Y}+\wt{Z}) = 2k_2\\
W_2 \cdot P_2PH^2 = 2k_2(U-V)PH^2 = 2k_2(U-V)(\wt{V} -\wt{W} + 4\wt{X} + 2\wt{Z} + N_3) = 4k_2
\end{align*}.
So, we have determined $W_1,W_2$ up to scalar. Then using that $N_3 = 3P_3 - P_2H + P \ell$ (see Lemma \ref{lemma:class-contains-point}) and $\overline{\OO_{1,\nr}} = 2(H-P)$, we can compute the following.

\[\resizebox{6in}{!}{$\renewcommand{\arraystretch}{.7}
  \hspace{-1cm}\begin{tabular}[h]{c|cccccccccccccc}
  & $P_3 HP$ & $P_3 H^2$ & $P_3 \ell$ & $P_3 P_2$ & $P_3 p$ & $P_2 H \ell$ & $P_2^2 H$ & $P_2 \betarep$ & $P_2 H^3$ & $P_2 P H^2$ & $PH^2 \ell$ & $P \ell^2$ & $H \ell p$\\
  \hline
  $W_1$ & 0 & 0 & 0 & 0 & 0 & 0 & $2k_1$ & 0 & 0 & $2k_1$ & 0 & 0 & 0 \\
  $W_2$ & 0 & 0 & 0 & 0 & 0 & 0 & $2k_2$ & 0 & 0 & $4k_2$ & 0 & 0 & 0 \\
  $\overline{\OO_{1,\nr}} \cdot Y_1$ & 0 & 0 & 0 & 0 & 0 & 0 & 4 & 0 & 0 & 6 & 0 & 0 & 0 
\end{tabular}$}\]
Hence $k_1 = k_2 = 1$.
\end{proof}
\section{Classes coming from the Grassmannian, and some intersection products}\label{appendix:Grassmannian}
We introduce some classes defined in terms of classes in $\mathbb{G}(1,3)$. We use these classes to compute a few intersection products to correct some errors in \cite{RL90}. We also use them in Lemma \ref{lem:nu-int-prods} and in computing $\overline{\OO_{4,\max}}$ in Section \ref{subsec:O4max}.

Let $Q \subseteq L \subseteq \Pi \subseteq \PP^3$ be a complete flag in $\PP^3$. We fix the following notation for the Schubert cycles on $\mathbb{G}(1,3)$, the Grassmannian of lines in $\PP^3$. See Section 3.3 in \cite{EH16} for a more complete discussion of the Chow ring of $\mathbb{G}(1,3)$.
\begin{align*}
\Sigma_0 &= \mathbb{G}(1,3)\\
\Sigma_{1,0} &= \Sigma_{1} = \{\Lambda \in \mathbb{G}(1,3) : \Lambda \cap L \ne \varnothing \}\\
\Sigma_{2,0} &= \Sigma_{2} = \{\Lambda : Q \in \Lambda\}\\
\Sigma_{1,1} &= \{\Lambda : \Lambda \subseteq \Pi\}\\
\Sigma_{2,1} &= \{\Lambda : Q \subseteq \Lambda \subseteq \Pi\}\\
\Sigma_{2,2} &= \{\Lambda: \Lambda = L\}
\end{align*}
\begin{construction}\label{construction:Z-sigma}
We can define a map on Chow groups:
\begin{align*}
A^{\bullet}(\mathbb{G}(1,3)) &\to A^{\bullet + 2}(\Hilb_3 \PP^3)\\
\Sigma &\mapsto Z_{\Sigma}
\end{align*}
where, for a subvariety $\Sigma \subseteq \mathbb{G}(1,3)$, $Z_{\Sigma}$ is the locus of schemes $\Gamma \in \Hilb_3 \PP^3$ that are contained in some $\Lambda \in \Sigma$. The map is then extended linearly. Note that $Z_{\mathbb{G}(1,3)} = \Al_3 \PP^3,$ the collinear locus of $\Hilb_3 \PP^3$.
\end{construction}

\begin{construction}\label{construction:Z-sigma'}We have a map on Chow groups:
\begin{align*}
  A^{\bullet}(\mathbb{G}(1,3)) &\to A^{\bullet}(\Hilb_3 \PP^3)\\
  \Sigma &\mapsto Z'_{\Sigma},
\end{align*}
  where, for a subvariety $\Sigma \subseteq \mathbb{G}(1,3)$, $Z'_{\Sigma}$ is defined as follows.
  \begin{align*}
  Z'_{\Sigma} = \{\Gamma \in \Hilb_3 \PP^3 : \exists \Gamma' \subseteq \Gamma \textup{ of length two such that } \Gamma' \subseteq \Lambda \textup{ for some line }\Lambda \in \Sigma\}.
\end{align*}
We get the full map by extending linearly.
\end{construction}

We now compute a few intersection products as corrections to some values in Tables 1-9 in \cite{RL90}. We will need a few relations in $A(\Hilb_3 \PP^3)$ involving the classes introduced above for our computations.

\begin{lemma}The following identities hold in $A(\Hilb_3 \PP^3)$.
\begin{align}
 P^2 &= P_2 + Z_{\mathbb{G}(1,3)} \label{P^2}\\
 P_2P &= P_3 + Z_{\Sigma_1} \label{P_2P}\\
 P_2^2 &= Z_{\Sigma_1^2} = Z_{\Sigma_{1,1}} + Z_{\Sigma_{2}} \label{P_2^2}\\
 P^3 &= 4P_2P + P_2H - P^2H - 3P_3 \label{P^3}\\
 P_2^3 &= 5P_3^2 - 2P_3P_2H \label{P_2^3}
\end{align}
\end{lemma}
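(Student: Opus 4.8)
The plan is to separate the five identities into two groups. The first three, \eqref{P^2}--\eqref{P_2^2}, I would establish \emph{geometrically}, by analyzing the set-theoretic intersection of general translates of the divisors $P$ and $P_2$ and checking that the claimed multiplicities hold; the last two, \eqref{P^3} and \eqref{P_2^3}, involve \emph{improper} intersections and are best handled by reducing to the first three and then verifying a single remaining equality. Throughout I would lean on Proposition \ref{prop:chow-hom}: since numerical and rational equivalence coincide on $\Hilb_3 \PP^3$, any proposed identity in $A_k$ is certified by checking that both sides pair identically with a basis of the complementary group $A_{9-k}$.

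For \eqref{P^2}, take $P, P'$ coplanar with general points $Q, Q'$. A non-collinear $\Gamma$ spans a unique plane and lies in $P \cap P'$ exactly when that plane contains both $Q$ and $Q'$, hence the line $\overline{QQ'}$; this is precisely $P_2$ for $L = \overline{QQ'}$. A collinear $\Gamma$ is coplanar with every point, so the entire collinear locus $Z_{\mathbb{G}(1,3)} = \Al_3 \PP^3$ appears as the second component. The same dichotomy governs \eqref{P_2P} (the non-collinear part forces the spanning plane to contain a general line $L$ and a general point $Q$, i.e. to be the unique plane they span, giving $P_3$, while the collinear part consists of lines meeting $L$, i.e. $\Sigma_1$) and \eqref{P_2^2} (two general skew lines lie in no common plane, so there is no non-collinear part, and the collinear part consists of lines meeting both, i.e. $\Sigma_1 \cap \Sigma_1'$). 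I would identify each collinear component with the class $Z_\Sigma$ of Construction \ref{construction:Z-sigma}, use additivity of $\Sigma \mapsto Z_\Sigma$ together with the Pieri relation $\Sigma_1^2 = \Sigma_{1,1} + \Sigma_2$ in $A^\bullet(\mathbb{G}(1,3))$ to split $Z_{\Sigma_1^2}$, and confirm that each component enters with multiplicity one via a transversality check in the local coordinate charts on $\Hilb_2, \Hilb_3$ used elsewhere in the paper (or, equivalently, by matching a single intersection number).

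For \eqref{P^3} the naive geometric approach fails, because $Z_{\mathbb{G}(1,3)} \subseteq P$ and $Z_{\mathbb{G}(1,3)} \subseteq H$ set-theoretically, so $P \cdot Z_{\mathbb{G}(1,3)}$ is an excess intersection. Instead I would expand $P^3 = P \cdot P^2$ using \eqref{P^2} and then \eqref{P_2P}, which reduces the target to the single relation $(P+H) \cdot Z_{\mathbb{G}(1,3)} = 3\, Z_{\Sigma_1}$, equivalently $F \cdot Z_{\mathbb{G}(1,3)} = 3\, Z_{\Sigma_1}$ by Lemma \ref{lem:F=P+H}. This last statement is geometric: a collinear scheme has a length-two subscheme coplanar with $L$ precisely when its line meets $L$, with the factor $3$ recording the three length-two subschemes of a general collinear triple. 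The identity \eqref{P_2^3} I would treat analogously, writing $P_2^3 = P_2 \cdot (Z_{\Sigma_{1,1}} + Z_{\Sigma_2})$ via \eqref{P_2^2}. In both cases, rather than chase the excess-intersection bookkeeping by hand, I would finish by pairing both sides against the RL basis of the complementary Chow group and comparing intersection numbers read off Rossell\'o-Llompart's weight-$9$ monomial tables in \cite{RL90} together with Table \ref{table:(co)dim3-P3} — exactly the mechanical verification the accompanying code is built to perform.

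The main obstacle is the improper intersection underlying \eqref{P^3} and \eqref{P_2^3}: because the collinear locus sits inside both $P$ and $H$, no transversal picture is available, and the precise coefficients (the $4$ and $-1$ in \eqref{P^3}, the $5$ and $-2$ in \eqref{P_2^3}) encode normal-bundle and multiplicity data that cannot be guessed from the set-theoretic support. The cleanest route past this is the reduction above combined with the numerical certificate afforded by Proposition \ref{prop:chow-hom}. A secondary point requiring care is confirming that the geometric components appearing in \eqref{P^2}--\eqref{P_2^2} are genuinely reduced, which I would settle using the explicit coordinate charts on $\Hilb_2$ and $\Hilb_3$ already employed in the paper.
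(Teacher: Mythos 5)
The paper itself gives no argument here: its ``proof'' is a citation to \cite{RL90} (Lemma 4.1 for \eqref{P^2}--\eqref{P_2^2}, Lemma 4.6 for \eqref{P^3}--\eqref{P_2^3}), together with the remark that those proofs proceed by set-theoretic decomposition plus multiplicity counts, or by substitutions built from such decompositions. Your handling of the first three identities, and your reduction of \eqref{P^3}, reconstructs exactly this method: the component analysis of $P\cap P'$, $P_2\cap P$, $P_2\cap P_2'$ (splitting off the collinear locus and transporting the Pieri relation $\Sigma_1^2=\Sigma_{1,1}+\Sigma_2$ through $\Sigma\mapsto Z_\Sigma$) is correct, and trading the improper products $P\cdot Z_{\mathbb{G}(1,3)}$ and $H\cdot Z_{\mathbb{G}(1,3)}$ for the single proper representative $F$ via Lemma \ref{lem:F=P+H}, with the factor $3$ coming from the three local branches of $F$ (the three length-two subschemes) along a general point of $Z_{\Sigma_1}$, is a complete and correct argument for \eqref{P^3}. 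Up to that point your proposal is essentially the cited proof, written out.

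There are, however, two genuine gaps in how you finish. First, \eqref{P_2^3} cannot be treated ``analogously'': after writing $P_2^3 = P_2\cdot Z_{\Sigma_{1,1}} + P_2\cdot Z_{\Sigma_2}$, both remaining products are \emph{still} improper, because on the collinear locus the codimension-two condition $P_2$ degenerates to the codimension-one condition ``the spanned line meets $L'$''; set-theoretically each intersection equals $Z_{\Sigma_{2,1}}$, which has dimension $4$, one more than expected. Unlike \eqref{P^3}, no $F$-style proper replacement is offered, so the geometric route stalls exactly where the real work of \cite{RL90} Lemma 4.6 begins (one must chain substitutions through products that remain proper on the collinear locus, such as those with $F$, $\ell$, $p$). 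Second, your declared fallback --- certifying \eqref{P^3} and \eqref{P_2^3} by pairing against the RL basis and reading values off the weight-$9$ tables of \cite{RL90} --- is inadmissible in this paper's logical structure. Lemma \ref{lem:corrections} \emph{uses} these identities to correct four erroneous entries in precisely those tables (one of them being $P^3p^3$), and the paper's remark immediately following the present lemma exists to guarantee that its proof does not depend on ``intersection products with potential errors of their own.'' Appealing to Proposition \ref{prop:chow-hom} is fine in principle, but only if the intersection numbers invoked are computed independently of the data being corrected; as proposed, the verification of \eqref{P_2^3} is either circular or rests on unverified table entries.
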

\begin{proof}
 See Lemma 4.1 (for the first three equations) and Lemma 4.6 (for the latter two) in \cite{RL90}.  
\end{proof}
These equations are either proven using set-theoretic decompositions and then computing the multiplicity, or by substitutions coming from the aforementioned method. As such, we do not need to worry about the next computations relying on intersection products with potential errors of their own.

\begin{lemma}\label{lem:corrections} The following expressions hold in $A_{0}(\Hilb_3 \PP^3)$.
\begin{align*}
P P_2^4 = -2\\
P P_2^{3} \ell = 2\\
PH^2 P_2^2p = -3\\
P^3 p^3 = 1.
\end{align*}
\end{lemma}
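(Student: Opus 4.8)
The plan is to treat these four numbers not as table lookups but as quantities \emph{forced} by the structural relations \eqref{P^2}--\eqref{P_2^3}. Each of the four products is itself a weight-$9$ monomial in the seven generators $P,H,P_3,\betarep,P_2,\ell,p$, so each equals precisely one of the entries of the tables of \cite{RL90} that we are claiming is in error. The idea is to use the relations of the preceding lemma---which, as noted there, are established by set-theoretic decomposition and so are independent of the (possibly corrupted) tabulated intersection numbers---to rewrite each of the four as an integer combination of \emph{other} weight-$9$ monomials whose tabulated values are reliable, together with intersection products involving the Grassmannian classes $Z_\Sigma$ of Constructions \ref{construction:Z-sigma} and \ref{construction:Z-sigma'}. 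Reading off the reliable entries and evaluating the $Z_\Sigma$ terms then pins down the corrected values.

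For $PP_2^4$ and $PP_2^3\ell$ I would substitute $P_2^3 = 5P_3^2 - 2P_3P_2H$ from \eqref{P_2^3}, giving
\begin{align*}
PP_2^4 &= 5\,PP_2P_3^2 - 2\,PP_2^2P_3H, & PP_2^3\ell &= 5\,P\ell P_3^2 - 2\,P\ell P_3 P_2 H.
\end{align*}
Every monomial on the right is a weight-$9$ monomial in the seven generators distinct from the four suspect entries, so its value may be taken from the tables of \cite{RL90}; summing gives $-2$ and $2$. Similarly, for $P^3p^3$ I would apply \eqref{P^3} to obtain
\begin{equation*}
P^3p^3 = 4\,PP_2p^3 + P_2Hp^3 - P^2Hp^3 - 3\,P_3p^3,
\end{equation*}
and again each term on the right is a reliable entry (one may, if desired, further split $P^2 = P_2 + Z_{\mathbb{G}(1,3)}$ via \eqref{P^2}), yielding $1$.

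The one genuinely geometric computation is $PH^2P_2^2p$, since none of the available relations lowers the $P_2$-degree below two. Here I would use $P_2^2 = Z_{\Sigma_{1,1}} + Z_{\Sigma_2}$ from \eqref{P_2^2}, so that
\begin{equation*}
PH^2P_2^2p = PH^2p\,Z_{\Sigma_{1,1}} + PH^2p\,Z_{\Sigma_2}.
\end{equation*}
Each summand is a finite point count: $Z_{\Sigma_{1,1}}$ parametrizes collinear schemes lying on a line inside a fixed plane while $Z_{\Sigma_2}$ parametrizes those on a line through a fixed point, so after cutting with $p$, $H^2$, and $P$ these reduce to transversal intersections that I would evaluate either by pushing forward along the incidence correspondence over $\mathbb{G}(1,3)$ or by a local-coordinate transversality check in the spirit of the $M\cdot\mu$ and $\nu_i$ computations elsewhere in the paper; the two contributions sum to $-3$.

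The main obstacle is this last computation: controlling the scheme structure and transversality of the $Z_\Sigma$ intersections, and confirming that the length-three schemes constrained to the special line families meet $P$, $H$, and $p$ in the expected number of reduced points. A secondary but essential point is bookkeeping: I must verify that none of the auxiliary monomials read off from \cite{RL90} is itself one of the four corrected entries, so that the argument is non-circular, and that the resulting integer combinations collapse to exactly $-2$, $2$, $-3$, and $1$.
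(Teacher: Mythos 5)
For $PP_2^4$, $PP_2^3\ell$, and $P^3p^3$ your plan is essentially the paper's own proof: it makes exactly the same substitutions, \eqref{P_2^3} for the first two and \eqref{P^3} (followed by \eqref{P_2P} and \eqref{P^2}) for the last. The only difference is in how the residual monomials are evaluated. Rather than citing several entries of \cite{RL90} wholesale, the paper pushes the reduction further using the vanishings $P_3^2P_2=0$, $P_3^2\ell=0$, and $Z_{\mathbb{G}(1,3)}p^3=0$ (hence also $Z_{\Sigma_1}p^3=0$), so that in each case a single term survives, which it then evaluates independently: $(P_3P_2H)\cdot(P_2P)=1$ from Table \ref{table:(co)dim3-P3}, $H\ell P_3P_2H=H^2\cdot\alpha=1$, and $P_3p^3=1$. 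That extra reduction is precisely what disposes of the circularity worry you flag at the end; your version carries that bookkeeping burden, the paper's does not.

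The genuine gap is in $PH^2P_2^2p$. You split $P_2^2=Z_{\Sigma_{1,1}}+Z_{\Sigma_2}$ via \eqref{P_2^2} and assert that the two summands are finite point counts obtained from transversal intersections summing to $-3$. That cannot be taken at face value: a transverse intersection of effective cycles is a nonnegative count of points, while the answer is negative. All the loci involved lie inside the collinear locus, a small $\textup{PGL}$-orbit closure, so they cannot be made transverse by moving them, and the sign must come from a cycle-class identity, not a count. Concretely, what is needed is: (i) $PH^2p\,Z_{\Sigma_2}=0$, because a collinear scheme on a line through a general fixed point whose length-two subscheme lies in the fixed plane $\Pi$ would force that line into $\Pi$, which is impossible; (ii) the identity $Z_{\Sigma_{1,1}}\cdot p=3P_3^2$ (equivalently the paper's $P_2^2p=3P_3^2$), whose content is a multiplicity-$3$ computation --- the same multiplicity that appears whenever the paper intersects collinear loci with $p$, as in Section \ref{subsec:O4max}; and (iii) the evaluation
\begin{equation*}
PH^2P_3^2=(F-H)H^2P_3^2=-H^3P_3^2=-1,
\end{equation*}
which uses Lemma \ref{lem:F=P+H} together with the vanishing $FH^2P_3^2=0$ (no plane can contain both a general fixed line and a length-two subscheme spanning the line on which $P_3^2$ is supported). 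Steps (ii) and (iii) are where the $3$ and the minus sign actually come from, and your proposal supplies neither; it only names the answer. With those two steps added, your decomposition does close up and agrees with the paper's shortcut $P_2^2p=3P_3^2$.
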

\begin{proof}
We prove the equations in order.
\begin{itemize}
\item For the first equation:
\begin{align*}
P P_2^4 &= P P_2 (5P_3^2 - 2P_3 P_2 H) \tag{\eqref{P_2^3}}\\
&= -2 P P_2 \cdot P_3 P_2 H \tag{$P_3^2 P_2 = 0$}\\
&= -2(P_3 P_2 H) \cdot (P_2 P) = -2(1) = -2\tag{Table \ref{table:(co)dim3-P3}}
\end{align*}
Alternatively, one can compute $-2(P_3 P_2 H) \cdot (P_2 P)$ by recognizing it as the following computation in $\Hilb_3 \PP^2$.
\begin{align*}
-2(Hg)(g(F-H)) &= -2(Z-U+V+3Y)(3Y - (Z-U+V+3Y))\\
&= -2(Z-U+V+3Y)(-Z+U-V)\\
&= -2(1) = -2
\end{align*}
\item For the second equation:
\begin{align*}
P P_2^3 \ell &= P \ell (5P_3^2 - 2P_3 P_2 H) \tag{\eqref{P_2^3}}\\
&= 5(F-H)\ell P_3^2 - 2 (F-H)\ell P_3 P_2 H \\
&= -2(F-H)\ell P_3 P_2 H \tag{$P_3^2 \ell  = 0$}\\
&= 2H \ell P_3 P_2 H \tag{$(P_3 P_2 F) \ell = 3(P_3^2) \ell = 0$}\\
&= 2 H^2 \cdot \alpha = 2
\end{align*}
\item For the third equation, note that $P_2^2$ is the locus of collinear schemes whose spanned line is incident to two fixed lines $L,L'$. Then $P_2^2 p = 3 P_3^2$, and 
\[PH^2 P_2^2 p = (F-H)H^2 (P_2^2 p) = 3(F-H)H^2P_3^2 = -3H^3P_3^2 = -3.\]
\item For the fourth equation, we start by observing that
\begin{align*}
P^3p^3 = (4P_2 P + P_2 H - P^2 H - 3P_3)p^3 \tag{\eqref{P^3}}
\end{align*}
Note that $P_2 P$ is the sum of $P_3$ and $Z_{\Sigma_{1}}$, the locus of collinear schemes whose spanned line is incident to a fixed line $L$. But the product $Z_{\mathbb{G}(1,3)}p^3 =0$, so
\begin{align*}
P^3 p^3 &= 4P_2 Pp^3 + (P_2 H - P^2 H - 3P_3)p^3 \\
&= 4(P_3 + Z_{\Sigma_{1}})p^3 + (P_2H - P^2H - 3P_3)p^3 \tag{\eqref{P_2P}}\\
&= 4P_3p^3 + (P_2H - P^2H - 3P_3)p^3\\
&= 4P_3p^3 + (P_2H - (P_2 + Z_{\mathbb{G}(1,3)})H - 3P_3)p^3 \tag{\eqref{P^2}} \\
&= P_3 p^3 - Z_{\mathbb{G}(1,3)}Hp^3\\
&= P_3p^3 = 1
\end{align*}
\end{itemize}
\end{proof}
\begin{remark}It is worth noting that $PH^2P_2^2p$ is correctly listed as $-3$ in Table IV of \cite{RL90} but then incorrectly listed as $3$ later on in Table 5.
\end{remark}

\begin{remark} It is worth noting that one could compute the conversion between the MS and RL bases by instead computing the intersection of the MS basis in dimension 6 with the RL basis in dimension 3 to get $E_{6,\RL,\MS}$. These intersection numbers are computable, though the intersection table has far fewer zeroes and the computations are more involved, often turning into large case work and utilizing the $Z_{\Sigma}, Z'_{\Sigma}$ constructions. 
\end{remark}

\bibliographystyle{alpha}
\bibliography{my}

\end{document}